\numberwithin{equation}{section}
\newcommand{\op}{\operatorname}
\newcommand{\C}{\mathbb{C}}
\newcommand{\Q}{\mathbb{Q}}
\newcommand{\Z}{\mathbb{Z}}
\newcommand{\mbf}{\mathbf}
\newcommand{\mc}{\mathcal}
\newcommand{\pa}{\partial}
\newcommand{\into}{\hookrightarrow}
\newcommand{\iso}{\cong}
\renewcommand{\L}{\mathcal L}
\renewcommand{\Im}{\op{Im}}
\newcommand{\be}{\mathbf{1}}
\newcommand{\ga}{\gamma}
\newcommand{\LD}{\langle}
\newcommand{\RD}{\rangle}
\newcommand{\LL}{\mathscr{L}}
\DeclareMathOperator{\End}{End}
\DeclareMathOperator{\Sym}{Sym}
\DeclareMathOperator{\Jac}{Jac}
\DeclareMathOperator{\Res}{Res}
\theoremstyle{plain}
\newtheorem{thm}{Theorem}[section]
\newtheorem{thm-defn}{Theorem/Definition}[section]
\newtheorem{lem}[thm]{Lemma}
\newtheorem{lem-defn}[thm]{Lemma/Definition}
\newtheorem{prop}[thm]{Proposition}
\newtheorem{cor}[thm]{Corollary}
\newtheorem{conjecture}[thm]{Conjecture}
\newtheorem{prop-defn}[thm]{Proposition-Definition}
\newtheorem{defn}[thm]{Definition}
\newtheorem{eg}[thm]{Example}
\newtheorem{thm-alg}[thm]{Theorem/Algorithm}
\newtheorem{rmk}[thm]{Remark}
\newtheorem{ques}[thm]{Question}
\begin{document}
\setlength{\unitlength}{1mm}

 \title{Mirror symmetry for exceptional  unimodular singularities}
  \author{Changzheng Li}
  \address{ Center for Geometry and Physics, Institute for Basic Science (IBS), Pohang 790-784, Republic of   Korea}
\email{czli@ibs.re.kr}

\author{Si Li}
  \address{Mathematical Science Center, Tsinghua University, Beijing, 100084, China}
\email{sli@math.tsinghua.edu.cn}

 \author{Kyoji Saito}
  \address{Kavli Institute for the Physics and Mathematics of the Universe (WPI),
Todai Institutes for Advanced Study, The University of Tokyo,
5-1-5 Kashiwa-no-Ha,Kashiwa City, Chiba 277-8583, Japan}
\email{kyoji.saito@ipmu.jp}

\author{Yefeng Shen}
  \address{Department of Mathematics, Stanford University, Stanford, California, 94305, USA}
\email{yfshen@stanford.edu}
\date{}

  \maketitle


\begin{abstract}
In this paper, we prove the mirror symmetry conjecture between the Saito-Givental theory of   exceptional unimodular singularities  on Landau-Ginzburg B-side and the Fan-Jarvis-Ruan-Witten theory of their mirror partners  on Landau-Ginzburg A-side.
On the B-side, we develop a perturbative method to compute the genus-zero correlation functions associated to the primitive forms. This is applied to the exceptional unimodular singularities,  and we show that the numerical invariants match the orbifold-Grothendieck-Riemann-Roch and WDVV calculations in FJRW theory on the A-side.
The coincidence of the  full data at all genera is established by reconstruction techniques. Our result establishes the first examples of LG-LG mirror symmetry for weighted homogeneous polynomials of central charge greater than one (i.e. which contain negative degree deformation parameters).
\end{abstract}


 \tableofcontents

\addtocontents{toc}{\protect\setcounter{tocdepth}{1}}

\section{Introduction}

Mirror symmetry is a fascinating geometric phenomenon discovered in string theory. The rise of mathematical interest dates back to the early 1990s, when Candelas,  Ossa, Green and Parkes
\cite{CDGP} successfully predicted the number of rational curves on the quintic 3-fold in terms of period integrals on the mirror quintics. Since then, one popular mathematical formulation of mirror symmetry is about the equivalence on the mirror pairs between the Gromov-Witten theory of counting curves  and the theory of variation of Hodge structures.  This is proved in \cite{LLY, G3} for a large class of mirror examples via toric geometry. Mirror symmetry has also deep extensions to open strings incorporating with D-brane constructions \cite{SYZ,Kontsevich}. In our paper, we will focus on closed string mirror symmetry.

Gromov-Witten theory presents the mathematical counterpart of A-twisted supersymmetric nonlinear $\sigma$-models, borrowing the name of A-model in physics terminology.  Its mirror theory is    called the B-model. On either side, there is  a closely related linearized model, called the N=2 Landau-Ginzburg model (or LG model), describing the quantum geometry of singularities. There exist deep connections in physics between nonlinear sigma models on Calabi-Yau manifolds and Landau-Ginzburg models (see \cite{mirror-book} for related literature).

In this paper, we will study the LG-LG mirror symmetry conjecture, which asserts an equivalence of two nontrivial theories of singularities for  mirror pairs $(W, G), (W^T, G^T)$. Here $W$ is an \emph{invertible  weighted homogeneous polynomial} on $\C^n$ with an isolated critical point at the origin, and $G$ is a finite abelian symmetry group of $W$.  The mirror weighted homogeneous polynomial $W^T$ was introduced   by Berglund and H\"ubsch \cite{BH} in early 1990s. For invertible polynomial $W=\sum_{i=1}^{n}\prod_{j=1}^{n}x_j^{a_{ij}},$ the mirror polynomial is  $W^T=\sum_{i=1}^{n}\prod_{j=1}^{n}x_j^{a_{ji}}$.
The mirror group $G^T$ was introduced by Berglund and Henningson \cite{BHe} and Krawitz  \cite{K} independently. Krawitz also constructed a ring isomorphism between two models. Now the mirror symmetry between these LG pairs is also called \emph{Berglund-H\"ubsch-Krawitz mirror} \cite{CR}.
When $G = G_W$ is the   group of diagonal symmetries of $W$, the dual group $G_W^T=\{1\}$ is trivial.   In order to   formulate the conjecture, let us introduce the theories on both sides first. We remark that one of the most general mirror constructions of LG models was proposed by  Hori and Vafa \cite{HV}.

A geometric candidate of LG A-model   is the Fan-Jarvis-Ruan-Witten theory (or FJRW theory) constructed by Fan, Jarvis and Ruan \cite{FJR,FJR2}, based on a proposal of Witten \cite{W}.
 Several purely algebraic versions of LG A-model have been worked out \cite{PV, CLL}.
The FJRW theory is closely related to the Gromov-Witten theory, in terms of the so-called Landau-Ginzburg/Calabi-Yau correspondence \cite{R, CR2}.
 The purpose of the FJRW theory is to solve the moduli problem for the Witten equations of a LG model $(W,G)$ ($G$ is an appropriate subgroup of $G_W$). The outputs are the \emph{FJRW invariants}. Analogous to the Gromov-Witten invariants, the FJRW invariants are defined via the intersection theory of appropriate \emph{virtual fundamental cycles} with tautological classes on the moduli space of stable curves.
 These invariants  virtually  count the solutions of the Witten equations on orbifold curves. For our purpose later, we consider  $G=G_W$, and summarize the main ingredients of the FJRW theory  as follows (see Section \ref{sec2} for more details):
\begin{itemize}
\item An \emph{FJRW ring} $(H_{W}, \bullet)$. Here $H_{W}$ is the \emph{FJRW state space} given by the $G_W$-invariant Lefschetz thimbles of $W$,  and the multiplication $\bullet$ is defined by an intersection pairing together with the \emph{genus $0$ primary FJRW invariants} with $3$ marked points.

\item A \emph{prepotential} $\mathcal{F}_{0,W}^{\rm FJRW}$ of a {formal} Frobenius manifold structure on $H_{W}$, whose coefficients are all the {\it genus $0$ primary FJRW invariants} $\LD\cdots\RD_0^W$.

\item A \emph{total ancestor potential} $\mathscr{A}_{W}^{\rm FJRW}$   that collects  the FJRW invariants at all genera.
\end{itemize}

A geometric candidate of the LG B-model of $(W^T, G^T)$ is still missing for general $G^T$. When $G=G_W$, then $G^T=\{1\}$   and  a candidate comes from the third author's theory of primitive forms \cite{Saito-primitive}. The starting point here is a germ of holomorphic function ($f=W^T$ to our interest here)
$$
f(\mathbf{x}):(\C^n,\mathbf{0})\to (\C,{0}), \quad \mathbf{x}=\{x_i\}_{i=1,\cdots,n}
$$
with an isolated singularity at the origin $\mathbf{0}$.    We consider its universal unfolding
$$
  (\C^n\times \C^\mu, \mathbf{0}\times \mathbf{0})\to (\C\times \C^\mu, 0\times \mathbf{0}), \quad (\mathbf{x}, \mathbf{s})\to (F(\mathbf{x}, \mathbf{s}), \mathbf{s})
$$
where $\mu=\dim_{\C}\Jac(f)_{\mathbf{0}}$ is the Milnor number, and $\mathbf{s}=\{s_\alpha\}_{\alpha=1,\cdots,\mu}$ parametrize the deformation. Roughly speaking, a primitive form is a relative holomorphic volume form
$$
\zeta=P(\mathbf{x},\mathbf{s})d^n\mathbf{x}, \quad d^n\mathbf{x}=dx_1\cdots dx_n
$$
at the germ $(\C^n\times \C^\mu, \mathbf{0}\times \mathbf{0})$, which induces a Frobenius manifold structure (which is called the flat structure in  \cite{Saito-primitive}) at the germ  $(\C^\mu, \mathbf{0})$. This gives the genus 0 invariants in the LG B-model.  At higher genus, Givental \cite{G2} proposed a remarkable formula (with its uniqueness established by Teleman \cite{T})  of the total ancestor potential for semi-simple Frobenius manifold structures, which can be extended to some non-semisimple boundary points  \cite{M,CI} including $\mathbf{s}=\mathbf{0}$ of our interest. The whole package is now referred to as the Saito-Givental theory. We will call the extended total ancestor potential at $\mathbf{s}=\mathbf{0}$ a Saito-Givental potential and denote it by $\mathscr{A}_{f}^{\rm SG}$.

For $G=G_W$, the  LG-LG mirror conjecture (of all genera) is well-formulated \cite{CR}:
\begin{conjecture}\label{conj}
For a mirror pair $(W,G_W)$ and $(W^T, \{1\})$, there exists a ring isomorphism $(H_{W},\bullet)\cong{\rm Jac}(W^T)$ together with a choice of primitive forms $\zeta$, such that  the FJRW  potential $\mathscr{A}_{W}^{\rm FJRW}$ is identified with the Saito-Givental potential $\mathscr{A}_{W^T}^{\rm SG}$.
\end{conjecture}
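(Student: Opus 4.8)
The plan is to decompose the all-genus statement into a genus-zero matching followed by an appeal to Givental's quantization formula and Teleman's uniqueness theorem, which together reduce higher-genus agreement to the agreement of the underlying (generically semisimple) Frobenius manifolds. Concretely, I would first produce the graded ring isomorphism $(H_{W},\bullet)\cong\Jac(W^T)$; since $W^T$ is itself an invertible weighted homogeneous polynomial of the same type, the underlying linear map is the Berglund-H\"ubsch-Krawitz map, and the content is to check that it intertwines the FJRW product $\bullet$ with the Jacobian-ring product and matches the pairings, so that the two Frobenius algebras at the base point coincide. This fixes the flat coordinates and the identification of state spaces that all later steps use.

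Next I would upgrade this to an isomorphism of the full genus-zero theories, i.e.\ of the Frobenius manifolds, equivalently of the prepotential $\mathcal{F}_{0,W}^{\rm FJRW}$ and its B-side counterpart. On the A-side the strategy is a reconstruction argument: the genus-zero FJRW prepotential for these exceptional unimodular examples should be determined by finitely many basic correlators, which I would compute by the orbifold-Grothendieck-Riemann-Roch formula for the virtual cycle (giving the three- and four-point ``seed'' invariants together with the concavity and index constraints), and then propagate to all genus-zero invariants via the WDVV equations together with the grading and selection rules coming from weighted homogeneity. The key claim to establish here is a reconstruction lemma asserting that these seeds plus WDVV suffice to recover $\mathcal{F}_{0,W}^{\rm FJRW}$.

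On the B-side the corresponding genus-zero data is the Frobenius manifold attached to a primitive form $\zeta$ of $W^T$, and here lies the main difficulty. Because the central charge exceeds one, the universal unfolding contains negative-degree (relevant) deformation directions, the primitive form is not simply $d^n\mathbf{x}$, and the flat structure is not immediately accessible in closed form. My plan is to develop the perturbative method advertised in the abstract: expand the primitive form and the associated higher residue pairing order by order in the deformation parameters $\mathbf{s}$, solve the defining equations of \cite{Saito-primitive} recursively up to the finite order dictated by the grading, and read off the structure constants and the prepotential from these expansions. The hard part will be controlling this recursion -- choosing a normalization of $\zeta$ compatible with the homogeneity, showing the perturbative series terminates or is otherwise computable, and extracting precisely the correlators needed to match the A-side seeds. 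Once the finitely many B-side seed invariants are computed and shown to equal the FJRW seeds under the ring isomorphism, the two genus-zero prepotentials agree by the same WDVV reconstruction, establishing the Frobenius-manifold isomorphism.

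Finally, for all genera I would invoke the Givental-Teleman classification: both total ancestor potentials arise from their common genus-zero Frobenius manifold by Givental's quantization of an $R$-matrix, which by Teleman's theorem is uniquely determined by the semisimple Frobenius structure on the locus $\mathbf{s}\neq\mathbf{0}$. Since the two Frobenius manifolds have been identified, the ancestor potentials agree on the semisimple locus; the identification of $\mathscr{A}_{W}^{\rm FJRW}$ with $\mathscr{A}_{W^T}^{\rm SG}$ at the non-semisimple origin $\mathbf{s}=\mathbf{0}$ then follows from the limiting and extension results of \cite{M,CI}, completing the proof. The genuinely new and delicate ingredient throughout is the perturbative computation of the primitive form in the central-charge-greater-than-one regime; everything else is reduction, reconstruction via WDVV, and an appeal to the quantization formalism.
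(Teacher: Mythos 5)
Your overall architecture coincides with the paper's: the Berglund--H\"ubsch--Krawitz map for the ring isomorphism, oGRR-computed seed correlators propagated by WDVV through a reconstruction lemma on the A-side, the perturbative solution for the (unique) primitive form on the B-side, and the Givental--Teleman classification plus Milanov's extension at the non-semisimple origin for all genera. In that sense the proposal is a faithful outline of the actual proof.

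The one genuine gap is in your first step. For two of the fourteen cases, $W=x^2+xy^q+yz^r$ with $(q,r)=(3,3)$ or $(2,4)$ (mirrors of types $Q_{11}$ and $S_{11}$), one variable has weight $\frac{1}{2}$, and then one of the ring generators of $H_W$ lies in a \emph{broad} sector; the ring isomorphism $(H_W,\bullet)\cong\Jac(W^T)$ was not previously known in these cases, so ``checking that the BHK map intertwines the products'' is not a routine verification. To even define $\Psi(x_1)$ one must rescale the broad generator by $\sqrt{-a_2/K}$, where $K$ is a genus-zero three-point correlator with two broad insertions. Such correlators are nonconcave, so the oGRR formula does not apply, and genus-zero WDVV by itself cannot rule out $K=0$; if $K$ vanished, the proposed isomorphism would not exist at all. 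The paper's resolution (Lemma \ref{lm:non-Krawitz}, proved in Section \ref{sec:non-K}) is a genuinely higher-genus input: integrating the classes $\Lambda^W_{1,4}$ over Getzler's codimension-two relation in $H_4(\overline{\mathcal{M}}_{1,4},\mathbb{Q})$ produces quadratic equations among the unknown broad correlators that are inconsistent with $K=0$. Your plan has no mechanism to supply this nonvanishing statement, so as written it establishes the conjecture only for the twelve cases in which all ring generators are narrow. (A minor terminological slip elsewhere: the negative-degree deformation parameters appearing when $\hat c_W>1$ are the \emph{irrelevant} ones, not the relevant ones.)
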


For the weighted homogeneous polynomial   $W=W(x_1, \cdots, x_n)$, we have
$$
    W(\lambda^{q_1}x_1,\cdots, \lambda^{q_n}x_n)=\lambda  W(x_1,\cdots, x_n), \,\,\,\forall \lambda\in\C^*,
$$
with each weight  $q_i$ being a unique rational number satisfying   $0<q_i\leq {1\over 2}$   \cite{Saito-quasihomogeneous}.
There is a partial   classification of $W$ using the central charge \cite{Saito-exceptional}
$$
\hat{c}_W:=\sum_{i}(1-2q_i).
$$
   So far, Conjecture \ref{conj} has only been proved for $\hat{c}_W<1$ (i.e., $ADE$ singularities) by Fan, Jarvis and Ruan\cite{FJR} and for $\hat{c}_W=1$ (i.e., simple elliptic singularities) by Krawitz, Milanov and Shen \cite{KS,MS}.
   However, it was  open for $\hat{c}_W>1$,
including exceptional unimodular modular singularities and a wide class of those related to K3 surfaces and CY 3-folds. One of the major obstacle is that computations in the LG B-model require concrete information about the primitive forms. The existence of  the primitive forms for a general isolated singulary has been  proved   by M.Saito \cite{Saito-existence}.
   However, explicit formulas  were only known for weighted homogeneous polynomials of $\hat c_W\leq 1$ \cite{Saito-primitive}. This is due to the difficulty of mixing between positive and negative degree deformations when $\hat c_W>1$.

 The   main objective of the present   paper is to prove that Conjecture \ref{conj} is true when $W^T$ is one of the   exceptional unimodular singularities as in the following table. Here we use variables $x, y, z$ instead of the conventional $x_1, x_2, \cdots, x_n$.
These polynomials are all of central charge larger than 1,  providing the first nontrivial examples with the existence of negative degree deformation (i.e., irrelevant deformation) parameters.

    \begin{table}[h]
        \caption{\label{tab-exceptional-singularities}  Exceptional unimodular singularities
    }
      \resizebox{\columnwidth}{!}{
        \begin{tabular}{|c|c||c|c||c|c||c|c|}
       \hline
         & Polynomial &  & Polynomial &  & Polynomial &   & Polynomial   \\ \hline \hline

    $E_{12}$ & $x^3+y^7$ &  $W_{12}$ & $x^4+y^5$ &$U_{12}$ & $x^3+y^3+z^4$ & &    \\              \hline
       $Q_{12}$ & $ {x^2y+xy^3+z^3}$  &  $Z_{12}$ & $x^3y+y^4x$ & $S_{12}$ & $x^2y+y^2z+z^3x$   &&  \\              \hline
   $E_{14}$ & $ {x^2+xy^4+z^3}$  &  $E_{13}$ & $x^3+xy^5$  & $Z_{13}$  & $ {x^2+xy^3+yz^3}$&   $W_{13}$ & $ {x^2+xy^2+yz^4}$   \\              \hline
       $Q_{10}$ & $x^2y+y^4+z^3$ & $Z_{11}$ & $x^3y+y^5$    &  $Q_{11}$ & $x^2y+y^3z+z^3$ &$S_{11}$ & $x^2y+y^2z+z^4$     \\              \hline
  \end{tabular}
  }
    \end{table}

Originally, the 14 exceptional unimodular singularities by Arnold \cite{Arnold-book
} are one parameter families of singularities with three variables. Each family contains a weighted homogenous singularity characterized by the existence of only one negative degree but no zero-degree deformation parameter \cite{Saito-exceptional}. In this paper, we consider the stable equivalence class of a singularity, and always choose polynomial representatives of the class with no square terms for additional variables.  The FJRW theory with the group of diagonal symmetries is invariant when adding square terms for additional variables.

\subsection*{LG-LG mirror symmetry for   exceptional unimodular singularities}
 Let us explain how we achieve the goal in more details.
Following \cite{K}, we can  specify a ring isomorphism $\Jac(W^T)\iso (H_{W}, \bullet)$.
Then we calculate certain FJRW invariants, by  an orbifold-Grothendieck-Riemann-Roch formula and WDVV equations. More precisely, we have
\begin{prop}\label{thm:FJRW}
Let $W^T$ be  one of the  14 singularities, then
 $$\Psi:\Jac(W^T)\to (H_{W}, \bullet),$$
 defined in \eqref{ring-iso} and \eqref{broad-gen},  generates a ring isomorphism.
Let $M_i^T$ be the $i$-th monomial of $W^T$, and $\phi_\mu$ be   of the highest degree among the specified basis of $\Jac(W^T)$ in Table \ref{invertible-singularities}. Let $q_i$ be the weight of $x_i$ with respect to  $W$.
For each $i$, we have genus $0$ FJRW invariants
\begin{equation}\label{4-point}
\LD \Psi(x_i),\Psi(x_i),\Psi(\frac{M_i^T}{x_i^2}),\Psi(\phi_{\mu})\RD_{0}^{W}=q_i, \mbox{ whenever } M_i^T\neq x_i^2.
\end{equation}
\end{prop}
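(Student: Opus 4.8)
The plan is to establish the ring isomorphism as in Krawitz's construction and then pin down the four-point invariants by a linear system of WDVV equations whose coefficients are three-point invariants, computing the genuinely new three-point (and a few concave four-point) invariants by the orbifold Grothendieck--Riemann--Roch (oGRR) formula and reading off the rest from the isomorphism $\Psi$ itself. For the ring-isomorphism assertion I would follow \cite{K}, matching the product $\bullet$ on $H_W$ (defined through the genus-zero three-point invariants) with the Jacobian product on $\Jac(W^T)$ on the specified generators. For the correlator \eqref{4-point} I would first record the genus-zero selection rule: each $\Psi(\phi)$ lies in a definite $G_W$-sector and carries a well-defined FJRW degree, and the dimension count on $\overline{\mathcal{M}}_{0,4}$ imposes one linear constraint on the total degree. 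I would check that $\bigl(\Psi(x_i),\Psi(x_i),\Psi(M_i^T/x_i^2),\Psi(\phi_\mu)\bigr)$ meets this constraint (so the invariant need not vanish) and catalogue every other four-point invariant of admissible degree, since these are exactly the quantities that can appear alongside the target in a WDVV relation. At the same time I would fix the pairing $\eta$ and the structure constants $\LD \Psi(\phi_a),\Psi(\phi_b),\Psi(\phi_c)\RD_0^W$, which, $\Psi$ being a ring isomorphism, are explicitly transported from $\Jac(W^T)$.

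Next I would produce the seed invariants by oGRR. When all four insertions lie in narrow sectors, the line bundles $L_j$ attached to the variables $x_j$ have negative degree along the universal curve, the moduli problem is concave, and the virtual class is the Euler class of $R^1\pi_*\bigl(\oplus_j L_j\bigr)$, whose Chern character is computed by Chiodo's oGRR formula. On $\overline{\mathcal{M}}_{0,k}$ only the top-degree part survives; the ages of the insertions at the marked points and of the $L_j$ at the nodes enter through the Bernoulli-polynomial terms, and since these ages are built from the weights $q_j=\theta_j(J)$, I expect the contribution of the $x_i$-line bundle to collapse to the factor $q_i$. These narrow evaluations, together with the $\Psi$-transported three-point data, seed the system.

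Finally I would run WDVV. Differentiating the associativity relation $\partial_a\partial_b\partial_e\mathcal{F}_{0,W}^{\rm FJRW}\,\eta^{ef}\,\partial_f\partial_c\partial_d\mathcal{F}_{0,W}^{\rm FJRW}=(b\leftrightarrow c)$ once in a flat coordinate and setting all coordinates to zero yields linear identities in which the four-point invariants occur with three-point coefficients. I would choose the distinguished quadruple so that $\LD \Psi(x_i),\Psi(x_i),\Psi(M_i^T/x_i^2),\Psi(\phi_\mu)\RD_0^W$ appears while every other term is already determined by the seeds, and then solve for it, obtaining $q_i$.

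The main obstacle is twofold. First is the bookkeeping inside oGRR: one must track the sector (hence the age) of each of $\Psi(x_i)$, $\Psi(M_i^T/x_i^2)$, $\Psi(\phi_\mu)$, the degrees of the $L_j$ over the universal curve, and the boundary corrections, and then show that the Bernoulli contributions combine to exactly $q_i$ rather than to some other weighted-homogeneous rational. Second, because $\phi_\mu$ is a broad class the full four-point invariant is not itself concave and can be reached only through WDVV, so one must verify that the chosen relation isolates the target, i.e.\ introduces no second unknown four-point invariant. Since the fourteen polynomials split into chain, loop, and mixed types for which the monomials $M_i^T$ and the concavity pattern of the $L_j$ behave differently, the whole argument must be organized as a finite case analysis, with WDVV covering precisely the cases where a direct oGRR evaluation is unavailable.
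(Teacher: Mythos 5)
Your strategy for the four-point invariants \eqref{4-point} is essentially the paper's: classify the correlators into concave and nonconcave ones, evaluate the concave ones directly by the orbifold-GRR formula \eqref{eq:OGRR}, and recover the nonconcave ones from concave seeds through the WDVV relation \eqref{eq:WDVV}. One correction to your reasoning there: $\Psi(\phi_\mu)$ is a \emph{narrow} class in all fourteen cases (e.g.\ $\be_{\gamma_{p-1,q-1}}$ for Fermat type), so many of the target correlators are themselves concave and need no WDVV at all; nonconcavity occurs instead either because some \emph{other} insertion is broad --- which happens exactly when a weight equals $\tfrac12$, so that $\Psi(x)$ itself is a broad generator --- or because condition \eqref{cond:concave} fails even with all-narrow insertions (as for the 3-loop $x^2z+xy^2+yz^3$). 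This changes the case division you would have to organize, but not the viability of that half of your plan.

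The genuine gap is in the first half of the statement, the assertion that $\Psi$ generates a ring isomorphism. You propose to quote Krawitz, but Proposition \ref{Krawitz} requires every weight $q_i<\tfrac12$, and it does not apply to the mirrors of $Q_{11}$ and $S_{11}$, namely $W=x^2+xy^3+yz^3$ and $W=x^2+xy^2+yz^4$, where $q_x=\tfrac12$. There the sector $H_{\rho_1 J}$ is generated by the broad element $y^{q-1}\be_{\rho_1 J}$, and the map must be defined by \eqref{broad-gen}, $\Psi(x)=\sqrt{-q/K_{q,r}}\,y^{q-1}\be_{\rho_1 J}$, which is well defined only if the broad three-point correlator $K_{q,r}$ of \eqref{broad3} is nonzero. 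That correlator has broad insertions, so no concavity/oGRR evaluation is available, and genus-zero WDVV by itself cannot force it to be nonzero: in the paper, assuming $K_{q,r}=0$, the WDVV system merely pins the broad four-point correlators to specific values (e.g.\ $C_1=C_2=-\tfrac{3}{16}$ in the $S_{11}$ case), and the contradiction that rules this out comes from the relation \eqref{gel} obtained by integrating $\Lambda_{1,4}^W$ over Getzler's codimension-two relation in $H_4(\overline{\mathcal{M}}_{1,4},\mathbb{Q})$ --- a genus-one input (Lemma \ref{lm:non-Krawitz}, Section \ref{sec:non-K}). Your toolkit of selection rules, oGRR and WDVV is entirely genus-zero, so for these two singularities it cannot even define $\Psi$, let alone verify the relation $\Psi(x)\bullet\Psi(x)=-q\,\Psi(y^{q-1}z)$; this is precisely the new ingredient of the paper that a blind appeal to \cite{K} misses.
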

\noindent Surprisingly,  if $W^T$ belongs to $Q_{11}$ or $S_{11}$, then the ring isomorphism  $\Jac(W^T)\iso (H_{W},\bullet)$ was  not known in the literature. The difficulty comes from that if there is some $q_j=\frac{1}{2}$, then one of the ring generators is a so-called \emph{broad element} in FJRW theory and invariants with broad generators are hard to compute. We overcome this difficulty  for the two cases, using Getzler's relation on $\overline{\mathcal{M}}_{1,4}$. It is quite interesting that   the higher genus structure  detects the ring structure.
  We expect that our   method  works for general unknown cases of   $(H_{W},\bullet)$ as well.

On the B-side,   recently there has appeared a perturbative way to compute the primitive forms for arbitrary weighted homogeneous singularities \cite{LLSaito}. In this paper, we develop the perturbative method to the whole package of the associated Frobenius manifolds, and describe a recursive algorithm to compute the associated flat coordinates and the potential function $\mc F_{0, W^T}^{\rm SG}$ (see section 3.2).
 We apply this perturbative method to compute genus zero invariants of LG B-model associated to the unique primitive forms  \cite{Hertling-classifyingspace, LLSaito}  of the exceptional unimodular singularities, and show that it coincides with the   A-side FJRW invariants for $W$ in Proposition \ref{thm:FJRW} (up to a sign).

In the next step, we establish a reconstruction theorem in such cases (Lemma \ref{reconstruction-lemma}), showing that the WDVV equations are powerful enough to determine the full prepotentials for both sides  from those invariants in \eqref{4-point}. This gives the main result of our paper:
\begin{thm}\label{g=0-mirror}
 Let $W^T$ be  one of the 14 exceptional unimodular singularities in Table \ref{tab-exceptional-singularities}. Then the specified  ring isomorphism $\Psi$ induces an isomorphism of Frobenius manifolds between   $\Jac(W^T)$ (which comes from the primitive form of $W^T$) and $H_{W}$ (which comes from the FJRW theory of $(W, G_W)$).
That is,    the prepotentials are equal to each other:
\begin{equation}\label{thm-0}
\mathcal{F}_{0,W^T}^{\rm SG}=\mathcal{F}_{0,W}^{\rm FJRW}.
\end{equation}

\end{thm}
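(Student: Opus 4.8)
The plan is to reduce the equality of prepotentials \eqref{thm-0} to a finite matching problem via the reconstruction Lemma \ref{reconstruction-lemma}, and then discharge that finite matching using Proposition \ref{thm:FJRW} on the A-side and the perturbative algorithm of Section 3.2 on the B-side.

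First I would observe that both $\mc F_{0,W^T}^{\rm SG}$ and $\mc F_{0,W}^{\rm FJRW}$ are prepotentials of \emph{conformal} Frobenius manifolds of the same dimension $\mu$: each carries a flat pairing, a unit, and an Euler vector field arising from the weighted-homogeneity of $W$ (equivalently of $W^T$). The degree-zero (cubic) part of each prepotential records the structure constants of the underlying Frobenius algebra, namely $(H_W,\bullet)$ and $\Jac(W^T)$ together with their intersection/residue pairings. Proposition \ref{thm:FJRW} supplies the ring isomorphism $\Psi$; I would first verify that, after a fixed normalization of the primitive form, $\Psi$ also matches the two pairings, so that the entire cubic terms of the two prepotentials agree under $\Psi$.

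Next, the conformal grading imposes a selection rule: a genus-zero correlator can be nonzero only if the degrees of its insertions sum to a fixed value determined by $\hat c_{W^T}$, so only finitely many correlators are unconstrained, and the $4$-point correlators in \eqref{4-point} are precisely the non-cubic seed data needed to start the recursion. The content of Lemma \ref{reconstruction-lemma} is that the WDVV equations, together with the Frobenius algebra and these $4$-point numbers, reconstruct the entire prepotential on each side. Hence it suffices to check that the two sides agree on \eqref{4-point}. On the A-side these are exactly the values $q_i$ from Proposition \ref{thm:FJRW}. On the B-side I would run the algorithm of Section 3.2 for the unique primitive form of $W^T$: construct the unfolding $F(\mbf x,\mbf s)$ and the primitive form $\zeta$ to the order required, extract the flat coordinates, and read off the corresponding $4$-point correlator of $\mc F_{0,W^T}^{\rm SG}$, which I expect to equal $\pm q_i$. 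The global sign should be absorbable into the normalization of $\Psi$, a grading-compatible rescaling of the flat coordinates. With the seeds matched, the same WDVV reconstruction applied to both prepotentials yields \eqref{thm-0}.

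The main obstacle I anticipate is the B-side perturbative computation: because $\hat c_{W^T}>1$, the unfolding mixes positive- and negative-degree deformation parameters, so one must carry the perturbative construction of the primitive form far enough to isolate the relevant $4$-point coefficient and then fix the overall sign/normalization so that it matches $q_i$ rather than $-q_i$. A secondary difficulty is ensuring that the hypotheses of Lemma \ref{reconstruction-lemma} genuinely hold in each of the $14$ cases — in particular for $Q_{11}$ and $S_{11}$, where a ring generator is a broad element and even the cubic seed data required Getzler's relation on $\overline{\mathcal M}_{1,4}$ to pin down.
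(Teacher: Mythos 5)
Your proposal follows essentially the same route as the paper's own proof in Section 4.1: match the cubic terms through the ring isomorphism $\Psi$, match the $4$-point seeds \eqref{4-point} using Proposition \ref{thm:FJRW} on the A-side and the perturbative computation of Section 3 on the B-side, absorb the resulting sign by a grading-compatible rescaling (the paper's modification $\tilde\Psi$, $\tilde\zeta$, $\tilde t_j=(-1)^{1-\deg t_j}t_j$), and then conclude by the Reconstruction Lemma \ref{reconstruction-lemma}. Even the obstacles you flag (the sign normalization, and the broad generators for $Q_{11}$, $S_{11}$ handled via Getzler's relation) are exactly the points the paper addresses, so there is no substantive difference in approach.
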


In general, the computations of FJRW invariants are challenging  due to our very little understanding of virtual fundamental cycles, especially at higher genus. However, according to Teleman \cite{T}  and Milanov \cite{M}, the non-semisimple limit $\mathscr{A}_{W^T}^{\rm SG}$ is fully determined by the genus-0 data on the semisimple points nearby. As a consequence, we upgrade our mirror symmetry statement to higher genus and prove Conjecture \ref{conj} for the   exceptional unimodular singularities.
\begin{cor}\label{main}
Conjecture \ref{conj} is true for $W^T$ being one of the     14 exceptional unimodular singularities in Table \ref{tab-exceptional-singularities}.
 The specified ring isomorphism  $\Psi$ induces the following coincidence of total ancestor potentials:
\begin{equation}\label{thm-g}
\mathscr{A}_{W^T}^{\rm SG}=\mathscr{A}_{W}^{\rm FJRW}.
\end{equation}
\end{cor}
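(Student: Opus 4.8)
The plan is to deduce the all-genus coincidence \eqref{thm-g} from the genus-zero identification \eqref{thm-0} by invoking the Givental--Teleman reconstruction of semisimple cohomological field theories, exactly along the lines indicated just before the statement. First I would record that the Frobenius manifold governed by $\mathcal{F}_{0,W^T}^{\rm SG}$ is generically semisimple: it is attached to the universal unfolding of the isolated singularity $W^T$, and at a generic deformation parameter $\mathbf{s}$ the critical points of $F(\cdot,\mathbf{s})$ are nondegenerate, so the Jacobian ring splits as a product of copies of $\C$ and the Frobenius multiplication is semisimple there. Through the isomorphism of Frobenius manifolds provided by Theorem \ref{g=0-mirror}, the FJRW Frobenius manifold on $H_{W}$ inherits this generic semisimplicity, and — this is the point I would stress — it inherits the same conformal structure, since $\Psi$ intertwines the Euler vector fields coming from the weighted-homogeneity of $W$ and $W^T$.

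Next I would recall that both theories are homogeneous cohomological field theories, so each assigns a total ancestor potential to every point of its Frobenius manifold. On the Saito--Givental side this is built into the construction: at a semisimple $\mathbf{s}$ the potential is by definition Givental's formula, namely the action of a quantized $R$-matrix $\widehat{R}$ on a product of copies of the Kontsevich--Witten tau-function, and the value at the non-semisimple point $\mathbf{s}=\mathbf 0$ is the limit whose existence is guaranteed by \cite{M,CI}. By Teleman's theorem \cite{T}, for a semisimple \emph{conformal} Frobenius manifold the $R$-matrix is uniquely pinned down by the genus-zero data together with the homogeneity constraint; hence the whole tower of higher-genus ancestor correlators is reconstructed from the Frobenius structure alone.

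The key step is then to run the identical reconstruction on the A-side. Since $\mathscr{A}_{W}^{\rm FJRW}$ is the total ancestor potential of a homogeneous, generically semisimple CohFT, Teleman's uniqueness applies verbatim, so at any semisimple point it again equals Givental's formula built from its own $R$-matrix. Because the underlying Frobenius manifolds and their Euler fields are identified by $\Psi$ via Theorem \ref{g=0-mirror}, the two $R$-matrices coincide under $\Psi$, and therefore the two ancestor potentials agree at every semisimple point. Taking the limit $\mathbf{s}\to\mathbf 0$ and invoking the continuity of the non-semisimple limit \cite{M,CI} on both sides yields $\mathscr{A}_{W^T}^{\rm SG}=\mathscr{A}_{W}^{\rm FJRW}$, which is \eqref{thm-g}; the ring-isomorphism statement of Conjecture \ref{conj} is the genus-zero shadow already secured by Theorem \ref{g=0-mirror}.

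I expect the main obstacle to be the verification that the FJRW potential genuinely satisfies the hypotheses of Teleman's theorem: that the FJRW CohFT is homogeneous with Euler field matching the one on the singularity side, and that its Frobenius manifold is semisimple at generic points and not merely defined at the origin. Both of these I would obtain from Theorem \ref{g=0-mirror}, which transports the analytic, generically semisimple, conformal structure of the $W^T$ unfolding onto $H_{W}$. Once homogeneity and generic semisimplicity are in place, the rigidity of the $R$-matrix does all the remaining work, and the higher-genus coincidence follows automatically from the genus-zero equality \eqref{thm-0}.
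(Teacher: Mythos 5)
Your proposal is correct and follows essentially the same route as the paper: deduce the all-genus statement from Theorem \ref{g=0-mirror} by applying Teleman's uniqueness theorem \cite{T} to both homogeneous, generically semisimple CohFTs at nearby semisimple points (where both potentials are given by Givental's formula), and then pass to the non-semisimple point $\mathbf{t}=\mathbf{0}$ using Milanov's holomorphic extension result \cite{M}. The paper's own proof is exactly this argument, stated more briefly.
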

\noindent The choice in Table \ref{tab-exceptional-singularities} has the property that the mirror weighted homogeneous polynomials are again representatives  of the exceptional unimodular singularities. Arnold discovered a strange duality among the 14 exceptional unimodular singularities, which says the Gabrielov numbers of each coincide with the Dolgachev numbers of its strange dual   \cite{Arnold-strangduality}. The strange duality is also reproved algebraically in \cite{Saito-duality}.
The choices in Table \ref{tab-exceptional-singularities} also represent  Arnold's strange duality: the first two rows are strange dual to themselves, and the last two rows are dual to each other. For example, $E_{14}$ is strange dual to $Q_{10}.$ Beyond the choices in Table \ref{tab-exceptional-singularities}, we also discuss the LG-LG mirror symmetry for other invertible polynomial representatives (some of whose mirrors may no longer be exceptional exceptional singularities) where equality \eqref{thm-g} still holds. The results are summarized in \textbf{Theorem \ref{addtional7-FJRW}} and  \textbf{Remark \ref{rmk.4.5}}.
Our method has the advantage of being applicable to general invertible polynomials with more involved WDVV techniques developed.

  The present paper is organized as follows.  In section 2, we give a brief review of the FJRW theory and compute those initial FJRW invariants as in Proposition \ref{thm:FJRW}.
   In section 3, we develop the perturbative method for computing the Frobenius manifolds in the LG B-model following \cite{LLSaito}. In section 4, we prove Conjecture \ref{conj} when the B-side is given by one of  the exceptional unimodular singularities. We also discuss the more general case when either side is given by an arbitrary weighted homogeneous polynomial representative  of the exceptional unimodular singularities.  Finally in the appendix, we provide   detailed descriptions of the  specified isomorphism $\Psi$ as well as a complete list of the genus-zero four-point functions on the B-side for all the  exceptional unimodular singularities.
    We would like to point out that section 2 and section 3 are completely independent of each other. Our readers can choose either sections to start  first.

\section{A-model: FJRW-theory}\label{sec2}

\subsection{FJRW-theory}
In this section, we give a brief review of FJRW theory. For more details, we refer the readers to \cite{FJR, FJR2}.
We start with a \emph{nondegenerate weighted homogeneous polynomial} $W=W(x_1,\cdots,x_n)$, where the nondegeneracy means that $W$ has isolated critical point at the origin $\mbf{0}\in\C^n$ and contains no monomial of the form $x_ix_j$ for $i\neq j$. This implies that each $x_i$ has a unique weight $q_i\in\mathbb{Q}\cap (0,\frac{1}{2}]$ \cite{Saito-quasihomogeneous}. Let $G_W$ be the \emph{group of diagonal symmetries},
\begin{equation}\label{eq:diag}
G_{W}:=
\left\{(\lambda_1,\dots,\lambda_n)\in(\mathbb{C}^*)^n\Big\vert\,W(\lambda_1\,x_1,\dots,\lambda_n\,x_n)=W(x_1,\dots,x_n)\right\}.
\end{equation}
In this paper, we will only consider the FJRW theory for the pair $(W,G_W)$. In general, the FJRW theory also works for any subgroup $G\subset G_W$ where $G$ contains the {\em exponential grading element}
\begin{equation}\label{exp-grading}
J=\left(\exp(2\pi\sqrt{-1}q_1),\cdots,\exp(2\pi\sqrt{-1}q_n)\right)\in G_W.
\end{equation}
\begin{defn}
The {\rm FJRW} state space $H_W$ for $(W,G_W)$ is defined to be the direct sum of all $G_W$-invariant relative cohomology:
\begin{equation}\label{FJRW-vec}
H_{W}:=
\bigoplus_{\gamma\in G_W}H_{\gamma},\quad
H_\gamma:=H^{N_\gamma}({\rm Fix}(\gamma);W^{\infty}_{\gamma};\mathbb{C})^{G_W}.
\end{equation}
Here ${\rm Fix}(\gamma)$ is the fixed points set of $\gamma$, and $\C^{N_{\gamma}}\cong{\rm Fix}(\gamma)\subset\C^n$. $W_{\gamma}$ is the restriction of $W$ to
${\rm Fix}(\gamma)$. $W^{\infty}_{\gamma}:=(\mathrm{Re}W_{\gamma})^{-1}(M,\infty)$ with $M\gg0$, where  ${\rm Re}W_{\gamma}$ is the real part of $W_{\gamma}$.
\end{defn}
Each $\gamma\in G_W$ has a unique form
\begin{equation}\label{theta}
\gamma=\left(\exp(2\pi\sqrt{-1}\Theta_1^{\ga}),\cdots,\exp(2\pi\sqrt{-1}\Theta_n^{\ga})\right)\in(\mathbb{C}^*)^n, \quad \Theta_i^{\gamma}\in[0,1)\cap\mathbb{Q}.
\end{equation}
Thus $H_{W}$ is a graded vector space, where for each nonzero $\alpha\in H_{\gamma}$, we assign its   degree
$$\deg\alpha={N_{\gamma}\over2}+\sum_{i=1}^n(\Theta_i^{\ga}-q_i).$$
We call
$H_\ga$ a \emph{narrow sector} if ${\rm Fix}(\ga)$ consists of  ${\bf 0}\in\mathbb{C}^n$ only, or a \emph{broad sector} otherwise.

The {\rm FJRW} vector space $H_{W}$ is equipped with a symmetric and nondegenerate pairing $$\LD\ ,\ \RD:=\sum_{\gamma\in G_W}\LD\ ,\ \RD_{\gamma},$$
where each $\LD\ ,\ \RD_{\gamma}:H_{\gamma}\times H_{\gamma^{-1}}\to\mathbb{C}$ is induced from the intersection pairing of Lefchetz thimbles. The pairing between $H_{\ga_1}$ and $H_{\ga_2}$ is nonzero only if $\ga_1\ga_2=1$. Moreover, there is a canonical isomorphism (see section 5.1 of \cite{FJR}, Appendix A of \cite{CIR}, and references therein)
\begin{equation}\label{eq:pair}
\left(H_{W},\LD\ ,\ \RD\right)\cong\left(\bigoplus_{\ga\in G_W}(\Jac(W_\ga)\omega_\ga)^{G_W}, \sum_{\ga\in G_W}\LD\ ,\RD_{{\rm res}, \ga}\right).
\end{equation}
Here $\omega_\ga$ is a volume form on ${\rm Fix}(\ga)$ of the type $dx_{j_1}\wedge\cdots\wedge dx_{j_{N_{\gamma}}}$, where we mean $\omega_\gamma=1$ if $N_\gamma=0$. $G_W$ acts on both   $x_i$ and $dx_i$. Let $(\Jac(W_\ga)\omega_\ga)^{G_W}$ be the $G_W$-invariant part of the action.
We choose a generator
\begin{align}\label{eqn-notationEgamma}
   \be_\ga:=\omega_\gamma\in H^{N_{\gamma}}({\rm Fix}(\ga);W^{\infty}_\ga;\mathbb{C}).
\end{align}
If $H_\ga$ is narrow, then $H_\ga\cong(\Jac(W_\ga)\omega_\ga)^{G_W}\cong\C$ is generated by $\be_\ga$.
If $H_{\ga}$ is broad, we denote generators of $H_\ga$ by $\phi\,\be_{\gamma}$ via $\phi\,\omega_{\ga}\in(\Jac(W_\ga)\omega_\ga)^{G_W}$. Finally, the residue pairing $\LD\ ,\RD_{{\rm res}, \ga}$
is defined from the standard residue ${\rm Res}_{W_\ga}$ of $W_\ga$,
$$\LD f\omega_\ga ,g\omega_\ga\RD_{{\rm res}, \ga}:={\rm Res}_{W_\ga}(fg):={\rm Residue}_{x=0}
{fg\omega_\ga\over
{\partial W_\ga\over\partial x_{j_1}}
\cdots {\partial W_\ga\over\partial x_{j_{N_\ga}}
}
}.$$

It is highly nontrivial to construct a virtual cycle for the moduli of solutions of Witten equations.
For the details of the construction, we refer to the original paper of Fan, Jarvis and Ruan \cite{FJR2}.
Let $\mathscr{C}:=\mathscr{C}_{g,k}$ be a stable genus-$g$ orbifold curve  with marked points $p_1,\dots,p_k$ (where $2g-2+k>0$). We only allow orbifold points at marked points and nodals. Near each orbifold point $p$, a local chart is given by $\C/G_p$ with $G_p\cong\Z/m\Z$ for some positive integer $m$. Let $\LL_1,\dots,\LL_n$ be orbifold line bundles over $\mathscr{C}$. Let $\sigma_i$ be a $C^{\infty}$-section of $\LL_i$.
We consider the $W$-structures, which can be thought of as the background data to be used to set up the Witten equations
\begin{equation*}
\bar{\partial}\sigma_i+\overline{\frac{\partial W}{\partial\sigma_i}}=0.
\end{equation*}
For simplicity, we only discuss cases that $W=M_1+\dots+M_{n}$,  with $M_i=\prod_{j=1}^{n}x_j^{a_{ij}}$.
Let $K_C$ be the canonical bundle for the underlying curve $C$ and $\rho:\mathscr{C}\to C$ be the forgetful morphism.
A \emph{$W$-structure} $\mathfrak{L}$ consists of $(\mathscr{C}, \LL_1, \cdots, \LL_n, \varphi_1,\cdots, \varphi_n)$ where $\varphi_i$ is an isomorphism of orbifold line bundles
\begin{equation*}
\varphi_i:\bigotimes_{j=1}^{n}\LL_j^{\otimes a_{i,j}}\longrightarrow \rho^*(K_{C, \rm log}), \quad K_{C, \rm log}:=K_C\otimes\bigotimes_{j=1}^{k} \mathscr{O}(p_j).
\end{equation*}
A $W$-structure induces a representation $r_p:G_{p}\to G_W$ at each point $p\in\mathscr{C}$. We require it to be faithful.
The moduli space of pairs $\mathfrak{C}=(\mathscr{C},\mathfrak{L})$ is called the \emph{moduli of stable $W$-orbicurves} and denoted by $\overline{\mathscr{W}}_{g,k}$.
According to \cite{FJR}, $\overline{\mathscr{W}}_{g,k}$ is a Deligne-Mumford stack, and the forgetful morphism ${\rm st}: \overline{\mathscr{W}}_{g,k}\to \overline{\mathcal{M}}_{g,k}$ to the moduli space of stable curves is flat, proper and quasi-finite.
$\overline{\mathscr{W}}_{g,k}$ can be decomposed into open and closed stacks by decorations on each marked point,
$$\overline{\mathscr{W}}_{g,k}=\sum_{(\ga_1,\dots,\ga_k)\in (G_W)^k}\overline{\mathscr{W}}_{g,k}(\ga_1,\dots,\ga_k),\quad \gamma_j:=r_{p_j}(1).$$
Furthermore, let $\Gamma$ be the dual graph of the underlying curve $C$. Each vertex of $\Gamma$ represents an irreducible component of $C$, each edge represents a node, and each half-edge represents a marked point. Let $\sharp E(\Gamma)$ be the number of edges in $\Gamma$. We decorate the half-edge representing the point $p_j$ by an element $\ga_j\in G_W$. We denote the decoration by $\Gamma_{\ga_1,\dots,\ga_k}$ and call it a {\em $G_W$-decorated dual graph}.
We further call it \emph{fully $G_W$-decorated} if we also assign some $\gamma_+\in G_W$ and $\gamma_-=(\gamma_+)^{-1}$ on the two sides of each edge.
The stack $\overline{\mathscr{W}}_{g,k}(\ga_1,\dots,\ga_k)$ is stratified, where each closure in $\overline{\mathscr{W}}_{g,k}(\ga_1,\dots,\ga_k)$ of the stack of stable $W$-orbicurves with fixed decorations $(\ga_1,\dots,\ga_k)$ on $\Gamma$ is denoted by $\overline{\mathscr{W}}_{g,k}(\Gamma_{\ga_1,\dots,\ga_k})$.

If $\overline{\mathscr{W}}_{g,k}(\ga_1, \cdots, \ga_k)$ is nonempty, then the \emph{Line bundle criterion} follows(\cite[Proposition 2.2.8]{FJR}):
\begin{equation}\label{eq:line bdle}
\deg (\rho_*\LL_i)=(2g-2+k)q_i-\sum_{j=1}^{k}\Theta_i^{\ga_j}\in \mathbb{Z},\,i=1, \cdots, n.
\end{equation}

In \cite{FJR2}, Fan, Jarvis and Ruan perturb the polynomial $W$ to polynomials of Morse type and construct virtual cycles from the solutions of the perturbed Witten equations. Those virtual cycles transform in the same way as the Lefschetz thimbles attched to the critical points of the perturbed polynomials.
As a consequence, they construct a virtual cycle
$$[\overline{\mathscr{W}}_{g,k}(\Gamma_{\ga_1,\dots,\ga_k})]^{\rm vir}\in H_{*}(\overline{\mathscr{W}}_{g,k}(\Gamma_{\ga_1,\dots,\ga_k}),\mathbb{C})\otimes\prod_{j=1}^{k}H_{N_{\gamma_j}}({\rm Fix}(\ga_j),W^{\infty}_{\ga_j},\mathbb{C})^{G_W},$$
which has total degree
\begin{equation}\label{degree-vir}
2\left((\hat{c}_W-3)(1-g)+k-\sharp E(\Gamma)-\sum_{j=1}^{k}\sum_{i=1}^{n}(\Theta_i^{\gamma_j}-q_i)\right).
\end{equation}
Based on this, they obtain a cohomological field theory $\{\Lambda^W_{g,k}:(H_W)^{\otimes k}\longrightarrow H^*(\overline{\mathcal{M}}_{g,k},\mathbb{C})\}$ with a flat identity. Each $\Lambda^W_{g,k}$ is defined by extending the following map linearly to $H_W$,
$$\Lambda^W_{g,k}(\alpha_1,\dots,\alpha_k):=\frac{|G_W|^g}{\deg({\rm st})}{\rm PD}\ {\rm st}_*\left([\overline{\mathscr{W}}_{g,k}(\ga_1,\dots,\ga_k)]^{\rm vir}\cap\prod_{j=1}^{k}\alpha_j\right), \quad \alpha_j\in H_{\ga_j}.$$

\begin{defn}\label{defn:FJRW}
Let $\psi_j$ be the $j$-th psi class in $H^*(\overline{\mathcal{M}}_{g,k})$. Define FJRW invariants (or correlators)
\begin{equation}\label{FJRW inv}
\LD\tau_{\ell_1}(\alpha_1),\dots,\tau_{\ell_k}(\alpha_k)\RD_{g,k}^W=\int_{\overline{\mathcal{M}}_{g,k}}\Lambda_{g,k}^W(\alpha_1,\dots,\alpha_k)\prod_{j=1}^{k}\psi_j^{\ell_j}, \quad \alpha_j\in H_{\ga_j}.
\end{equation}
The FJRW invariants in \eqref{FJRW inv} are called primary if all $\ell_j=0$. We simply denoted them by $\LD\alpha_1,\dots,\alpha_k\RD^W_{g}.$
We call $(H_W,\bullet)$ an {\rm FJRW} ring where the multiplication $\bullet$ on $H_W$ is defined by
\begin{equation}\label{FJRW-ring}
\LD\alpha\bullet\beta, \ga\RD=\LD\alpha,\beta,\gamma\RD^W_{0}.
\end{equation}

\end{defn}

If the invariant in \eqref{FJRW inv} is nonzero, the intergrand should be a top degree element in $H^*(\overline{\mathcal{M}}_{g,k}).$
Then using the total degree formula \eqref{degree-vir} and the definition of the cohomological field theory, it is not hard to see that
\begin{equation}\label{selection-non-primary}
\sum_{j=1}^k\deg\alpha_j+\sum_{j=1}^k\ell_j=(\hat{c}_W-3)(1-g)+k.
\end{equation}

Let us fix a basis $\{\alpha_j\}_{j=1}^{\mu}$ of $H_W$, with $\alpha_1$ being the identity. Let
$\mbf{t}(z)=\sum_{m\geq0}\sum_{j=1}^{\mu}t_{m, \alpha_j}\,\alpha_j\,z^m.$
The \emph{{\rm FJRW} total ancestor potential} is defined to be
\begin{equation}\label{FJRW-anc}
\mathscr{A}_{W}^{\rm FJRW}
=\exp\left(\sum_{g\geq0}\hbar^{g-1}\sum_{\begin{subarray}{l}
        k\geq0
      \end{subarray}}\frac{1}{k!}\LD\mbf{t}(\psi_1)+\psi_1,\dots,\mbf{t}(\psi_k)+\psi_k\RD_{g,k}^{W}\right).
\end{equation}
There is a {formal} Frobenius manifold structure on $H_W$, in the sense of Dubrovin \cite{D}. Its \emph{prepotential} is given by
$$\mathcal{F}^{\rm FJRW}_{0,W}=\sum_{\begin{subarray}{l}
        k\geq3
      \end{subarray}}\frac{1}{k!}\LD\mbf{t}_0,\dots,\mbf{t}_0\RD_{0,k}^{W}, \quad
\mbf{t}_0=\sum_{j=1}^{\mu}t_{0, \alpha_j}\,\alpha_j.$$
The prepotential satisfies the WDVV (Witten-Dijkgraaf-Verlinde-Verlinde) equations:
\begin{equation}\label{WDVV}
\sum_{i,j}\frac{\partial^3 \mathcal{F}_{0, W}^{\scriptsize\mbox{FJRW}}}{\partial t_{\alpha_a} \partial t_{\alpha_d}  \partial t_{\alpha_i} }
\eta^{ij}
\frac{\partial^3 \mathcal{F}_{0, W}^{\scriptsize\mbox{FJRW}}}{\partial t_{\alpha_j}  \partial t_{\alpha_b}  \partial t_{\alpha_c} }
=
\sum_{i,j}\frac{\partial^3 \mathcal{F}_{0, W}^{\scriptsize\mbox{FJRW}}}{\partial t_{\alpha_a}  \partial t_{\alpha_b}  \partial t_{\alpha_i} }
\eta^{ij}
\frac{\partial^3 \mathcal{F}_{0, W}^{\scriptsize\mbox{FJRW}}}{\partial t_{\alpha_j}  \partial t_{\alpha_c}  \partial t_{\alpha_d} }, \quad t_{\alpha}:=t_{0, \alpha},
\end{equation}
where $\big(\eta^{ij}\big)$ is the inverse of the matrix $\big(\LD \alpha_i, \alpha_j\RD\big)$. It implies (\cite[Lemma 6.2.6]{FJR})
\begin{eqnarray}\label{eq:WDVV}
\begin{split}
\LD\dots,\alpha_a,\alpha_b\bullet\alpha_c,\alpha_d\RD_{0,k}
=&S_k+\LD\dots,\alpha_a\bullet\alpha_b,\alpha_c,\alpha_d\RD_{0,k}+\LD\dots,\alpha_a,\alpha_b,\alpha_c\bullet\alpha_d\RD_{0,k}\\
&-\LD\dots,\alpha_a\bullet\alpha_d,\alpha_b,\alpha_c\RD_{0,k}.
\end{split}
\end{eqnarray}
where $k\geq3,$ $S_k$ is a linear combination of products of correlators with number of marked points no greater than $k-1$. Moreover, both $S_3=S_4=0.$

Another important tool is the Concavity Axiom \cite[Theorem 4.1.8]{FJR}.
Consider the universal $W$-structure $(\LL_1,\cdots,\LL_n)$ on the universal curve $\pi:\mathscr{C}\to\overline{\mathscr{W}}_{g,k}(\Gamma_{\ga_1,\dots,\ga_k})$.
\begin{equation}\label{cond:concave}
\textrm{If all } H_{\gamma_i} \textrm{ are\ narrow\ and\ } \pi_*(\oplus_{i=1}^{n} \LL_i)=0,
\end{equation}
then $R^1\pi_{*}(\oplus_{i=1}^{n}\LL_i)$ is a vector bundle of constant rank, denoted by $D$, and
\begin{equation}\label{top-euler}
[\overline{\mathscr{W}}_{g,k}(\Gamma_{\ga_1,\dots,\ga_k})]^{\rm vir}\cap\prod_{i=1}^{k}\be_{\ga_i}=(-1)^{D}c_{D}\left(R^1\pi_{*}(\oplus_{i=1}^{n}\LL_i)\right)\cap[\overline{\mathscr{W}}_{g,k}(\Gamma_{\ga_1,\dots,\ga_k})].
\end{equation}
It can be calculated by the orbifold Grothendieck-Riemann-Roch  formula \cite[Theorem 1.1.1]{C}.
As a consequence, if the codimention $D=1$, we have
\begin{equation}\label{eq:OGRR}
\Lambda^W_{0,4}(\be_{\ga_1},\dots,\be_{\ga_4})=
\sum_{i=1}^n\left(\frac{B_{2}(q_i)}{2}\kappa_1
-\sum_{j=1}^4\frac{B_{2}(\Theta_{i}^{\ga_j})}{2}\psi_j
+\sum_{\Gamma_{\rm cut}}
\frac{B_{2}(\Theta_{i}^{\ga_{\Gamma_{\rm cut}}})}{2}[\Gamma_{\rm cut}]\right).
\end{equation}
Here $B_2(x):=x^2-x+\frac{1}{6}$ is the second Bernoulli polynomial. $\kappa_1$ is the $1$-st kappa class on $\overline{\mathcal{M}}_{0,4}$. Here the graphs $\Gamma_{\rm cut}$ are fully $G_W$-decorated on the boundary of $\overline{\mathscr{W}}_{0,4}(\ga_1,\dots,\ga_4)$. Each $\Gamma_{\rm cut}$ has exactly one edge which seperates the graph into two components. Two sides of the edge are decorated by some $\gamma_+\in G_W$ and $\gamma_-:=(\gamma_+)^{-1}$ such that each component of $\Gamma_{\rm cut}$ satisfies the line bundle criterion \eqref{eq:line bdle}. Finally, $[\Gamma_{\rm cut}]$ denotes the boundary class in $H^*(\overline{\mathcal{M}}_{0,4},\C)$ that corresponds to the underlying undecorated graph of $\Gamma_{\rm cut}.$

We call a correlator   \emph{concave} if it satisfies \eqref{cond:concave}. Otherwise we call it is \emph{nonconcave}. Nonconcave correlator may contain broad sectors. In this paper, we will use WDVV to compute the  nonconcave correlators. Some other methods are described in \cite{CLL, Gu}.

\subsection{FJRW invariants}\label{sec:ring-iso}
In this subsection, we will prove Proposition \ref{thm:FJRW}. Let us first describe the construction of the mirror polynomial $W^T$.
Let $W=M_1+\dots+M_{n}$, with $M_i=\prod_{j=1}^{n}x_j^{a_{ij}}.$ We call such a polynomial $W$ {\em invertible} because its {\em exponent matrix} $E_W:=\big(a_{ij}\big)$ is invertible. Berglund and H\"ubsch \cite{BH} introduced a mirror polynomial $W^T$,
\begin{equation}\label{mirror-poly}
W^T:=\sum_{i=1}^{n}\prod_{j=1}^{n}x_j^{a_{ji}}.
\end{equation}
Its exponent matrix $E_{W^T}$ is just the transpose matrix of $E_W$, i.e. $E_{W^T}=(E_W)^T$.
In \cite{KreS}, Kreuzer and Skarke proved that every invertible $W$ is a direct sum of three {\em atomic types} of singularities: Fermat, chain and loop. If $W$ is of atomic type, then $W^T$ belongs to the same atomic type.
We list the three atomic types (with $q_i\leq\frac{1}{2}$) and a $\mathbb{C}$-basis of their Jacobi algebra as follows. The table also contains an element $\phi_{\mu}$ of highest degree.
    \begin{table}[h]
        \caption{\label{invertible-singularities}  Invertible singularities
    }
        \begin{tabular}{|c|c|c|c|}
       \hline
 & Polynomial $f$ & $\mathbb{C}$-basis of ${\rm Jac}(f)$ & $\phi_{\mu}$  \\ \hline \hline
       $m$-Fermat & $x_1^{a_1}+\dots+x_m^{a_m}$                                  &  $\prod_{i=1}^mx_i^{k_i}, k_i< a_i-1$  & $\prod_{i=1}^mx_i^{a_i-2}$  \\   \hline
       $m$-Chain:  & $x_1^{a_1}x_2+x_2^{a_2}x_3+\dots+x_m^{a_m}$      & $\{\prod_{i=1}^mx_i^{k_i}\}_{\mathbf{k}}$ &$x_1^{a_1-2}\prod_{i=2}^mx_i^{a_i-1}$   \\  \hline
       $m$-Loop:   & $x_1^{a_1}x_2+x_2^{a_2}x_3+\dots+x_m^{a_m}x_1$ & $\prod_{i=1}^mx_i^{k_i}, k_i< a_i$  &$\prod_{i=1}^mx_i^{a_i-1}$    \\ \hline

     \end{tabular}
\end{table}

\noindent Here in the case of $m$-Chain, $\mathbf{k}=(k_1,\cdots,k_m)$ satisfies (1)
$k_j\leq  a_j-1$ for all $j$ and (2) the property that   $\mathbf{k}$  is not of the form $ (a_1-1,0,a_{3}-1,0,\cdots, a_{2l-1}-1,i,*,\cdots,*)$ with $i\geq 1$.

A first step towards the LG-LG mirror symetry Conjecture \ref{conj} is a ring isomorphism between $(H_{W},\bullet)$ and ${\rm Jac}(W^T)$. For computation convenience later, we use the following normalized residue defined by the normalized residue pairing $\eta_{W^T}$ (which is to be explained in \eqref{normalized-residue22})
\begin{equation}\label{normalized-residue}
\widetilde{\rm Res}_{W^T}(\phi_\mu):=\eta_{W^T}(dx_1\cdots dx_n, \phi_\mu dx_1\cdots dx_n)=1.
\end{equation}

The ring isomorphism has been studied  in   \cite{FJR, A, FS, K, KP+} for various  examples.
According to the Axiom of Sums of singularities \cite[Theorem 4.1.8 (8)]{FJR} in FJRW theory, the FJRW ring $(H_W,\bullet)$ is a tensor product of the FJRW ring of each direct summand.
Krawitz constructed a ring isomorphism for each atomic type if all $q_i<\frac{1}{2}$ \cite{K}.   For our purpose, if $W$ is a polynomial in Table \ref{tab-exceptional-singularities}, then it is already known that $(H_{W},\bullet)$ is isomorphic to ${\rm Jac}(W^T)$ except for $W=x^2+xy^q+yz^r$, $(q,r)=(3,3), (2,4)$. We will  give the new constructions for the  two exceptional cases, and will also  briefly introduce the earlier constructions for the other 12 cases.

Since $E_W$ is invertible, we can write $E_W^{-1}$ using column vectors $\rho_k$,
$$E_W^{-1}=\Big(\rho_1 | \cdots | \rho_n\Big), \quad \rho_k:=
\left(\varphi_1^{(k)},\cdots, \varphi_n^{(k)}\right)^T, \quad \varphi_i^{(k)}\in\mathbb{Q}.$$
We can view $\rho_{k}$ as an element in $G_{W}$ by defining the action
$$\rho_{k}=(\exp(2\pi\sqrt{-1}\varphi_1^{(k)}), \cdots, \exp(2\pi\sqrt{-1}\varphi_n^{(k)}))\in G_W.$$
Thus $\rho_i J\in G_W$, with $J$ the exponential grading element in \eqref{exp-grading}.
\begin{prop}[\cite{K}]\label{Krawitz}
For any $n$-variable invertible polynomial $W$ with each degree $q_i<\frac{1}{2}$, there is a degree-preserving ring isomorphism $\Psi: \Jac(W^T)\to(H_W,\bullet)$. In particular, if   $\rho_i J$ is narrow for $i=1,\cdots, n$, then $\Psi$ is generated by
\begin{equation}\label{ring-iso}
\Psi(x_i)={\bf 1}_{\rho_i J}, \quad i=1,\cdots, n
\end{equation}

\end{prop}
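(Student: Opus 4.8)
The plan is to define $\Psi$ on the polynomial generators by $x_i\mapsto \mathbf{1}_{\rho_i J}$, extend it to a $\C$-algebra homomorphism from the free ring $\C[x_1,\dots,x_n]$ into $(H_W,\bullet)$, and then verify three things: that it annihilates the Jacobian ideal $(\partial_{x_1}W^T,\dots,\partial_{x_n}W^T)$ and so descends to $\Jac(W^T)$; that it preserves degrees; and that it is bijective. Since every invertible $W$ is a direct sum of Fermat, chain and loop atoms (Kreuzer--Skarke \cite{KreS}), and since $\Jac$ of a sum is the tensor product of the factors while the FJRW ring of a sum is the tensor product of the factors by the Axiom of Sums of singularities \cite[Theorem 4.1.8 (8)]{FJR}, and $\Psi$ respects both decompositions, it suffices to treat each atomic type separately.

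First I would record the group-theoretic dictionary. Writing $G_W\cong E_W^{-1}\Z^n/\Z^n$, the columns $\rho_1,\dots,\rho_n$ of $E_W^{-1}$ generate $G_W$, and the weight vector $(q_1,\dots,q_n)^T=E_W^{-1}(1,\dots,1)^T=\sum_k\rho_k$ shows $J=\prod_k\rho_k$. The essential structural fact is the narrow product rule: for narrow $\ga_1,\ga_2$,
\begin{equation*}
\mathbf{1}_{\ga_1}\bullet \mathbf{1}_{\ga_2}=c(\ga_1,\ga_2)\,\mathbf{1}_{\ga_1\ga_2 J^{-1}},
\end{equation*}
which holds because the pairing is nonzero only between $\ga$ and $\ga^{-1}$, and the line bundle criterion \eqref{eq:line bdle} on $\overline{\mathscr W}_{0,3}$ forces $\ga_1\ga_2\ga_3=J$ on any nonvanishing three-point correlator $\LD\mathbf{1}_{\ga_1},\mathbf{1}_{\ga_2},\mathbf{1}_{\ga_3}\RD_0^W$. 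A degree count using $\deg\mathbf{1}_\ga=\sum_i(\Theta_i^\ga-q_i)$ for narrow $\ga$ shows this rule is degree-additive; hence the same computation proves $\Psi$ is degree-preserving, once I match $\deg x_i$ in $\Jac(W^T)$ with $\deg\mathbf{1}_{\rho_i J}$.

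The crux is evaluating the structure constants $c(\ga_1,\ga_2)$. When the exponents do not wrap around (each $\Theta_i^{\ga_1}+\Theta_i^{\ga_2}-q_i$ already lies in $[0,1)$), the target sector $\ga_1\ga_2 J^{-1}$ is again narrow, the relevant correlator is concave, and the Concavity Axiom on the point $\overline{\mathcal M}_{0,3}$ together with the line bundle criterion forces each $\LL_i$ to have a degree making $\pi_*\LL_i=R^1\pi_*\LL_i=0$; thus $D=0$ and, with the normalization \eqref{normalized-residue}, one gets $c=1$. Consequently $\Psi$ sends the monomial $\prod_i x_i^{b_i}$ to $\mathbf{1}_{(\prod_i\rho_i^{b_i})\,J}$ whenever that sector is narrow. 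Matching the Jacobian relations then reduces to showing $\Psi(\partial_{x_\ell}W^T)=0$: each monomial of $\partial_{x_\ell}W^T$ maps into a sector where some $\Theta_i$ hits $0$, i.e. a boundary/wrap-around case in which the associated correlator vanishes (or the terms cancel), verified atom by atom using the explicit $\C$-bases of Table \ref{invertible-singularities}. Bijectivity is then immediate from $\dim_\C\Jac(W^T)=\mu=\mu_W=\dim_\C H_W$.

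I expect the main obstacle to be the bookkeeping in the non-Fermat atoms. For chains the admissible exponent set $\mathbf k$ of Table \ref{invertible-singularities} is genuinely constrained, so producing a clean bijection between the monomial basis and the narrow sectors, and checking that every monomial of each $\partial_{x_\ell}W^T$ lands on a vanishing boundary sector, requires a careful analysis of which coordinates can reach $\Theta_i=0$. A secondary technical point is pinning the structure constant to be exactly $1$ rather than merely nonzero: this rests on the precise normalization of the residue pairing $\eta_{W^T}$ and on the sign $(-1)^D$ in \eqref{top-euler}, which must be tracked through the tensor decomposition so that the normalizations on the atoms assemble correctly into the stated isomorphism.
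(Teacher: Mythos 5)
A preliminary remark on the benchmark: the paper itself contains no proof of Proposition \ref{Krawitz} — it is imported wholesale from Krawitz's thesis \cite{K} — and the closest thing to such an argument inside the paper is the proof of Proposition \ref{non-Krawitz}, where Krawitz's construction is extended to two cases with a weight equal to $\frac{1}{2}$. Measured against that, your skeleton is the right one: reduction to Fermat/chain/loop atoms via Kreuzer--Skarke and the Sums-of-singularities axiom, the selection rule $\ga_1\ga_2\ga_3=J$ forced by \eqref{eq:line bdle}, and the value $1$ for three-point correlators that are concave with all $\deg(\rho_*\LL_i)=-1$. But two essential steps fail as you have stated them.

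The serious gap is your treatment of the Jacobian ideal. For chain and loop atoms the relations $\pa_{x_\ell}W^T=0$ are binomial with nonzero coefficients (e.g. $x_1^{a_1}+a_2x_2^{a_2-1}x_3=0$), so killing the ideal is not a vanishing statement: you must prove that two distinct monomials map to the same one-dimensional sector with ratio exactly $-a_2$. The correlators carrying these constants are precisely the wrap-around ones, where some $\deg(\rho_*\LL_i)=0$; there concavity fails, and the value is computed by the Index Zero axiom of \cite{FJR} as the degree of the Witten map — this is exactly how the paper obtains $\LD\be_{J^{15}},\be_{J^{11}},\be_{J^{11}}\RD_{0,3}^W=-3$ and matches the relation $y^3+3z^2=0$ in the proof of Proposition \ref{non-Krawitz}. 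Your proposal never invokes this axiom, and ``the correlator vanishes (or the terms cancel)'' is the opposite of what happens: with only the concave values $0$ and $1$ at your disposal, the relations of a chain or loop can never be verified. Second, the first sentence of the proposition is not an all-narrow statement: for loop atoms a generator $\rho_iJ$ can be broad even when every $q_i<\frac{1}{2}$ — e.g. $W=x^2y+xy^3+z^3$ with $(q_x,q_y,q_z)=(\frac{2}{5},\frac{1}{5},\frac{1}{3})$, where $\Psi(x)=x\be_{J^{10}}$ as in \eqref{iso:Q12} — so your narrow product rule does not even define $\Psi$ in those cases; one needs broad generators, the residue pairing \eqref{eq:pair}, and suitable rescalings. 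Finally, a smaller but genuine error: $\mu_W\neq\mu_{W^T}$ in general ($x^3y+y^5$ has $\mu=11$ while its transpose $x^3+xy^5$ has $\mu=13$), so your dimension count should read $\dim_\C H_W=\mu_{W^T}=\dim_\C\Jac(W^T)$, and this equality is itself part of what must be proved (via the bijection between basis monomials and sectors), not a free input.
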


\begin{eg}
Let $W=x^p+y^q$, $p,q>2$.
Denote $\gamma_{i,j}=\left(\exp({2\pi\sqrt{-1}\,i\over p}), \exp({2\pi\sqrt{-1}\,j\over q})\right)$.
 The FJRW ring  $(H_W,\bullet)$ is generated by $\{\be_{\gamma_{2,1}},\be_{\gamma_{1,2}}\}$. Then $W^T=W$ and the ring isomorphism $\Psi: \Jac(W^T)\overset{\cong}{\to}(H_W,\bullet)$ generated by \eqref{ring-iso} extends as
  \begin{align}\label{mirror-2fermat}
     \Psi(x^{i-1}y^{j-1})=\be_{\gamma_{i,j}}, \quad 1\leq i< p,1\leq j< q.
  \end{align}
\end{eg}
For 2-Loop singularities, $\rho_i J$ may not be narrow for some $i\in\{1,2\}$. However, ring isomorphisms still exist. According to \cite{A,K}, we have
\begin{eg}
For $W=x^2y+xy^3+z^3\in Q_{12}$, $G_W\cong\mu_{15}$.
A ring isomorphism $\Psi: \Jac(W^T)\overset{\cong}{\to}(H_W,\bullet)$ is obtained by extending \eqref{ring-iso} from
\begin{equation}\label{iso:Q12}
\Psi(x)=x\be_{J^{10}}.
\end{equation}
The corresponding vector space isomorphism $\Psi:{\rm Jac}(W^T)\to H_W$ is as follows:
  \begin{table}[h]
      \resizebox{\columnwidth}{!}{
\begin{tabular}{|c||c|c|c|c|c|c|c|c|c|c|c|c|}
   \hline
 $H_W$  & $\be_{J}$ & $\be_{J^{13}}$ & $\be_{J^{11}}$  & $x\be_{J^{10}}$ & $y^2\be_{J^{10}}$  & $\be_{J^{8}}$ & $\be_{J^7}$ & $x\be_{J^5}$ & $y^2\be_{J^5}$ & $\be_{J^4}$  & $\be_{J^{2}}$  & $\be_{J^{14}}$ \\
   \hline
 ${\rm Jac}(W^T)$  & $1$ & $y$ & $z$ & $x$ & $y^2$ & $yz$ & $xy$ & $xz$ & $y^2z$ & $xy^2$ & $xyz$ & $xy^2z$  \\
   \hline
 \end{tabular}
 }
 \end{table}
\end{eg}

Now we discuss if there exists $q_i=\frac{1}{2}$ for $W$. Without loss of  generality, we assume $W$ is of the atomic type:
$W=x_1^2+x_1x_2^{a_2}+\cdots+x_{m-1}x_m^{a_m}$. Then ${\rm Fix}(\rho_1 J)=\{(x_1,\cdots,x_m)\in\mathbb{C}^k|x_i=0, i>2\}$. Thus $H_{\rho_1 J}$ is generated by a broad element $x_2^{a_2-1}\be_{\rho_1 J}$, which is a ring generator of $H_W$. If $m=2$, it is known   \cite{FS} that $\Psi:{\rm Jac}(W^T)\to (H_W,\bullet)$ generates a ring isomorphism, by $\Psi(x_1)=a_2x_2^{a_2-1}\be_{\rho_1 J}$ and $\Psi(x_2)=\be_{\rho_2 J}$. The key point is that the residue formula in \eqref{eq:pair} implies
$$\LD x_2^{a_2-1}\be_{\rho_1 J}, x_2^{a_2-1}\be_{\rho_1 J}, \be_{\rho_2^{1-a_2}J^{-1}}\RD_0^W
=\LD x_2^{a_2-1}\be_{\rho_1 J}, x_2^{a_2-1}\be_{\rho_1 J}, \be_{J}\RD_0^W=-\frac{1}{a_2}.$$
Inspired from this, for $m\geq3$, we consider
$$K:=\LD x_2^{a_2-1}\be_{\rho_1 J}, x_2^{a_2-1}\be_{\rho_1 J}, \be_{\rho_2^{1-a_2}\rho_{3}^{-1}J^{-1}}\RD_0^W.$$
If $K\neq0$, then it is possible to define
\begin{equation}\label{broad-gen}
\Psi(x_1)=\left(\sqrt{-\frac{a_2}{K}}\right)\,x_2^{a_2-1}\be_{\rho_1 J}.
\end{equation}
In Section \ref{sec:non-K}, using Getzler's relation, we will prove the following nonvanishing lemma,
\begin{lem}\label{lm:non-Krawitz}
Let $W=x^2+xy^q+yz^r, (q,r)=(3,3), (2,4)$. Then
\begin{equation}\label{broad3}
K_{q,r}:=\LD y^{q-1}\be_{\rho_1 J}, y^{q-1}\be_{\rho_1 J}, \be_{\rho_2^{1-q}\rho_{3}^{-1}J^{-1}}\RD_0^W\neq0.
\end{equation}
\end{lem}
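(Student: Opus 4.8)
The plan is to extract the nonvanishing of $K_{q,r}$ from a genus-one tautological relation, since $K_{q,r}$ is a nonconcave genus-zero correlator (it carries the broad insertions $y^{q-1}\be_{\rho_1 J}$) and therefore cannot be reached directly by the orbifold Grothendieck-Riemann-Roch formula \eqref{eq:OGRR}. Getzler's relation is an identity in $H^4(\overline{\mathcal{M}}_{1,4},\Q)$ expressing a certain $\mathbb{S}_4$-symmetric combination of codimension-two boundary strata as zero. Applying the FJRW cohomological field theory $\Lambda^W_{1,4}(\alpha_1,\alpha_2,\alpha_3,\alpha_4)$, pairing with the classes appearing in Getzler's relation, and integrating over $\overline{\mathcal{M}}_{1,4}$ yields a numerical identity among FJRW correlators. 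By the gluing (composition) axiom of the CohFT, the contribution of each boundary stratum factors as a sum of products of correlators of lower complexity, the sum running over an intermediate sector $\gamma_+$ and its inverse inserted at the node and weighted by the inverse pairing $\eta^{ij}$.

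First I would fix the four insertions. The goal is to choose $\alpha_1,\dots,\alpha_4\in H_W$ so that (i) the genus-one four-point correlator and every genus-one lower-point correlator that appears is either concave, hence computable via \eqref{eq:OGRR}, or is forced to vanish by the selection rule \eqref{selection-non-primary}; and (ii) at least one boundary degeneration produces a genus-zero three-pointed component carrying exactly $y^{q-1}\be_{\rho_1 J}, y^{q-1}\be_{\rho_1 J}$ and a narrow node sector, so that $K_{q,r}$ appears as a factor. Since the two broad entries in \eqref{broad3} coincide, a natural choice places the broad generator $y^{q-1}\be_{\rho_1 J}$ at two marked points and narrow generators at the other two; the degree bookkeeping in \eqref{selection-non-primary} together with the line bundle criterion \eqref{eq:line bdle} then pins down which intermediate sectors $\gamma_+$ can propagate through each node, collapsing the sector sum so that the term involving $K_{q,r}$ carries the node sector $\be_{\rho_2^{1-q}\rho_3^{-1}J^{-1}}$ as in \eqref{broad3}.

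Next I would evaluate the known side. Every term not involving $K_{q,r}$ is a product of concave correlators with all-narrow insertions, which I would compute from the Bernoulli-polynomial formula \eqref{eq:OGRR}, the residue pairing \eqref{eq:pair}, and the normalization \eqref{normalized-residue}; the remaining genus-one inputs are disposed of by the selection rule and by the vanishing of primary correlators with an identity insertion. After this reduction Getzler's relation becomes a single equation of the schematic form $c\cdot K_{q,r}^2=N_{q,r}$ (the square arising because the two broad entries are identical), where $c$ is an explicit nonzero rational number and $N_{q,r}$ is a concrete nonzero rational number assembled from the concave correlators. Solving for $K_{q,r}$ then gives $K_{q,r}\neq 0$, and indeed an explicit value, which is what \eqref{broad-gen} requires.

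The hard part will be the combinatorial bookkeeping of Getzler's relation: one must enumerate all $G_W$-decorations of its boundary strata compatible with the chosen insertions, verify via \eqref{eq:line bdle} which intermediate sectors $\gamma_+$ are admissible at each node, and confirm that no second unknown nonconcave correlator survives alongside $K_{q,r}$, since otherwise the equation would fail to isolate it. This analysis must be carried out separately for $(q,r)=(3,3)$ and $(q,r)=(2,4)$, because the fixed locus $\mathrm{Fix}(\rho_1 J)$ and the narrow/broad pattern of the propagating sectors differ; I expect the case $(q,r)=(2,4)$, where $q=2$ lowers the degree of $y^{q-1}=y$, to demand the more delicate selection-rule analysis.
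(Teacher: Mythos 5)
Your instinct that Getzler's relation on $\overline{\mathcal{M}}_{1,4}$ is the right tool matches the paper, but the way you deploy it has a genuine gap. You place the two broad insertions $y^{q-1}\be_{\rho_1 J}$ among the four marked points and expect $K_{q,r}$ --- a genus-zero \emph{three}-point correlator --- to appear as a factor of some boundary term. That cannot happen in the way you need: the only dual graphs in Getzler's relation \eqref{eq:Getzler} possessing a genus-zero vertex with exactly three special points (two marked points plus one node) are $\delta_{2,2},\delta_{2,3},\delta_{2,4},\delta_{3,4}$, i.e.\ precisely the strata that also contain a genus-one vertex. Hence every term in which $K_{q,r}$ could factor out is multiplied by an unknown genus-one one- or two-point correlator: if that genus-one factor vanishes for degree reasons, the whole term vanishes and Getzler's relation carries no information about $K_{q,r}$; if it does not vanish, it is an unknown that neither the concavity axiom \eqref{top-euler}, the formula \eqref{eq:OGRR} (which is a genus-zero, four-point statement), nor the selection rule can evaluate. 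The purely genus-zero strata $\delta_{0,3},\delta_{0,4},\delta_\beta$ only have vertices with at least four special points, so they produce four- and five-point correlators, never $K_{q,r}$. Moreover, with broad classes at the marked points the node sums $\sum_{\alpha,\beta}\eta^{\alpha\beta}\LD\cdots,\alpha,\beta\RD_0^W$ range over the broad part of $H_W$ and spawn several additional nonconcave unknowns, so your hoped-for single equation $c\,K_{q,r}^2=N_{q,r}$ does not materialize.

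The paper's proof is structured to avoid exactly these two problems, and it is a proof by contradiction in which WDVV is an indispensable second ingredient that your proposal omits. One integrates over Getzler's relation the class $\Lambda^W_{1,4}$ with \emph{all narrow} insertions --- $(\be_{J^9},\be_{J^9},\be_{J^9},\be_{J^9})$ for $(q,r)=(2,4)$ and $(\be_{J^{13}},\be_{J^{13}},\be_{J^7},\be_{J^7})$ for $(q,r)=(3,3)$ --- so that the strata with genus-one components vanish by the degree formula \eqref{degree-vir}, and the broad sectors enter only through the node sums, producing a small, controlled set of unknown genus-zero four-point correlators ($C_1,C_2$, resp.\ $C_1,C_2,C_3$). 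Getzler's relation then yields a single quadratic equation \eqref{gel} (resp.\ \eqref{Q11-gel}) in these unknowns, which by itself is underdetermined; $K_{q,r}$ itself never appears in it. The link to $K_{q,r}$ is made separately by WDVV: using $\be_{J^9}=\be_{J^{13}}\bullet\be_{J^{13}}$ (resp.\ $\be_{J^7}=\be_{J^{13}}\bullet\be_{J^{13}}$), the hypothesis $K_{q,r}=0$ forces $\LD\be_{J^{13}},u,v\RD_0^W=0$ and thereby pins down $C_1=C_2=-\tfrac{3}{16}$ (resp.\ $C_2=-\tfrac{5}{18}$, $C_3=0$), values computed by \eqref{eq:OGRR} that violate the Getzler equation --- contradiction. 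To salvage your direct approach you would have to supply a method for evaluating the genus-one factors that inevitably accompany any occurrence of $K_{q,r}$ in Getzler's relation, and the techniques available in this setting do not provide one.
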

As a direct consequence of Lemma \ref{lm:non-Krawitz}, it is not hard to check the following statement.
\begin{prop}\label{non-Krawitz}
Let $W^T$ be one of the exceptional unimodular singularities in Table \ref{tab-exceptional-singularities}, then the map $\Psi$ in \eqref{ring-iso} and \eqref{broad-gen} generates a degree-preserving ring isomorphism
$$\Psi:{\rm Jac}(W^T)\cong (H_W,\bullet).$$
\end{prop}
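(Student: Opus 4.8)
The plan is to separate the fourteen singularities into the twelve for which the relevant atomic factors of the A-model polynomial $W$ are covered by existing constructions, and the two genuinely new cases. For the former, the Axiom of Sums of Singularities reduces the computation to atomic summands, and on each summand a degree-preserving ring isomorphism is already available: Proposition \ref{Krawitz} of Krawitz covers the Fermat and chain factors with all $q_i<\tfrac12$, the $m=2$ chain with a square term (as occurs for the A-model $x^2+xy^4$ of $Q_{10}$) is handled in \cite{FS}, and the $2$-loop factors are treated by the explicit computations of \cite{A,K} recalled in the examples above. Thus the only cases requiring new input are the two genuine $3$-chains $W=x^2+xy^q+yz^r$ with $(q,r)=(3,3),(2,4)$, mirror to $Q_{11}$ and $S_{11}$, where $q_1=\tfrac12$ forces the generator $\Psi(x)$ to be the broad element of \eqref{broad-gen}.

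For these two chains I would first check that $\Psi$ is well defined and degree-preserving. Since $\Jac(W^T)$ is generated as a ring by $x,y,z$ and carries the monomial basis of Table \ref{invertible-singularities}, it suffices to prescribe $\Psi$ on these generators and extend multiplicatively. The narrow generators $\Psi(y),\Psi(z)$ are assigned by \eqref{ring-iso} and land in the correct graded pieces exactly as in Proposition \ref{Krawitz}, while the degree of the broad generator $y^{q-1}\be_{\rho_1 J}$, computed from the sector datum $\rho_1 J$ together with the contribution of the monomial $y^{q-1}$, is checked directly to match the degree of $x$ in $\Jac(W^T)$. The only remaining freedom is the scalar normalizing $\Psi(x)$, fixed by \eqref{broad-gen}.

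The decisive step is the ring-homomorphism property, which amounts to matching the finitely many nonzero three-point correlators of $(H_W,\bullet)$ against the structure constants of $\Jac(W^T)$. In the narrow directions these correlators are computed from the residue description \eqref{eq:pair}, exactly as in the known cases. The one remaining structure constant governs the self-product $\Psi(x)\bullet\Psi(x)$ and is, up to the residue pairing, proportional to the broad correlator $K_{q,r}$; the normalization $\sqrt{-q/K_{q,r}}$ in \eqref{broad-gen} is chosen precisely so that this reproduces the normalized residue $\widetilde{\rm Res}_{W^T}$ of the mirror, generalizing the elementary $m=2$ identity $\LD x_2^{a_2-1}\be_{\rho_1 J},x_2^{a_2-1}\be_{\rho_1 J},\be_J\RD_0^W=-\tfrac{1}{a_2}$ displayed before \eqref{broad-gen}. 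This is exactly where Lemma \ref{lm:non-Krawitz} is indispensable: were $K_{q,r}$ to vanish, the square root would be meaningless and no broad generator could be normalized. I expect this to be the main obstacle, since the broad self-product is not accessible by the elementary residue formula, and its nonvanishing is secured only through the higher-genus Getzler relation on $\overline{\mathcal{M}}_{1,4}$ used in the proof of that lemma.

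Finally I would conclude that $\Psi$ is an isomorphism. As $\Psi$ is a degree-preserving ring homomorphism sending the monomial basis of $\Jac(W^T)$ to $\bullet$-products of the generators $\Psi(x),\Psi(y),\Psi(z)$, the nondegeneracy of the Frobenius pairing on $H_W$ shows these images are linearly independent; since $\dim_{\C}\Jac(W^T)=\mu=\dim_{\C}H_W$ with matching graded dimensions, they form a basis, so $\Psi$ is the desired degree-preserving ring isomorphism.
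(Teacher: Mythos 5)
Your overall skeleton coincides with the paper's: reduce to the two $3$-chains $W=x^2+xy^q+yz^r$, $(q,r)=(3,3),(2,4)$ (mirrors of $Q_{11}$, $S_{11}$), invoke Lemma \ref{lm:non-Krawitz} to make the normalization \eqref{broad-gen} meaningful, verify the ring relations by correlator computations, and finish by a dimension count. However, two of your execution steps, as stated, would fail. The first is your claim that the narrow structure constants ``are computed from the residue description \eqref{eq:pair}, exactly as in the known cases.'' The identification \eqref{eq:pair} fixes only the state space and its pairing; the product $\bullet$ is defined by genus-zero three-point FJRW invariants \eqref{FJRW-ring}, which are virtual-cycle integrals, not residues. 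In the paper these narrow correlators are computed from the line bundle criterion \eqref{eq:line bdle} and concavity \eqref{top-euler}, and---crucially for exactly these two singularities---one correlator is \emph{not} concave in the rank-zero way and requires the Index Zero axiom (the degree of the Witten map): for $W=x^2+xy^3+yz^3$ it yields the value $-3$, which is precisely what verifies the Jacobian relation $y^3+3z^2=0$ (similarly for the $S_{11}$ case). No residue computation produces this constant, and deferring to ``known cases'' does not help: the paper stresses that for $Q_{11}$ and $S_{11}$ the ring isomorphism was not in the literature, so these computations must actually be carried out (Krawitz's technique applies, but the results are not quotable).

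The second gap is your assertion that ``the one remaining structure constant governs the self-product $\Psi(x)\bullet\Psi(x)$.'' It does not: the products $\Psi(x)\bullet\Psi(z^j)$, $j=1,\dots,r-2$, land in the \emph{other} broad sector(s), and both the relation-checking and the identification of the image basis (the constants such as $\sqrt{-2^3K_{2,4}}$ in the paper's tables) require pinning them down. The paper does this via the Frobenius property together with $\LD\Psi(x),\Psi(x),\Psi(z^{r-2})\RD_{0,3}^W=-q$, and for $(q,r)=(2,4)$ it needs one additional WDVV equation to determine $\Psi(x)\bullet\Psi(z)$; one must also check $\Psi(x)\bullet\Psi(y)=0$ (immediate from the degree constraint \eqref{selection-non-primary}), matching $xy=0$ in $\Jac(W^T)$. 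Finally, your concluding injectivity argument is under-justified: nondegeneracy of the pairing on $H_W$ gives linear independence of the images only if you have already shown that $\Psi$ preserves the pairing, which is itself one of the things being verified. A clean repair is the socle argument: once the relations hold, $\ker\Psi$ is an ideal of the local Frobenius algebra $\Jac(W^T)$, hence nonzero only if it contains the socle, so $\ker\Psi=0$ as soon as $\Psi(y)^{\bullet(q-1)}\bullet\Psi(z)^{\bullet(r-1)}\neq 0$; bijectivity then follows from $\dim\Jac(W^T)=\dim H_W$.
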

\begin{proof}
We only need to consider $W=x^2+xy^q+yz^r, (q,r)=(3,3), (2,4)$. We will check that $\Psi$ gives a vector space isomorphism which preserves the degree and the pairing on both side. We will also check that the generators in $H_W$ satisfy exactly the algebra relations as in $\Jac(W^T)$, by computing all the genus-0, 3-point correlators. We remark that we use the normalized residue in $\Jac(W^T)$, i.e.,
$$\widetilde{\rm Res}_{W^T}(y^{q-1}z^{r-1})=1.$$
Lemma \ref{lm:non-Krawitz} allows us to
extend $\Psi$ by defining $\Psi(x)$ as in \eqref{broad-gen}.
Then we can check directly that
$$\Psi(x)\bullet\Psi(x)=\left(-\frac{q}{K_{q,r}}\LD y^{q-1}\be_{\rho_1 J}, y^{q-1}\be_{\rho_1 J}, \be_{\rho_2^{1-q}\rho_{3}^{-1}J^{-1}}\RD_0^W\right)\be_{\rho_2^{q-1}\rho_{3}J}
=-q\Psi(y^{q-1}z).$$
This coincides with $x^2+qy^{q-1}z=0$ in $\Jac(W^T)$. We notice that the multiplication $\Psi(x)\bullet\Psi(z)$ can be computed via
$$\LD\Psi(x),\Psi(x), \Psi(z^{r-2})\RD_{0,3}^W=\LD\Psi(x)\bullet\Psi(x), \Psi(z^{r-2})\RD=-q.$$
For $r=4,$ we use the WDVV equation once to get $\Psi(x)\bullet\Psi(z)$. The preimages of the broad sectors are in the form of $cxz^{j}$, $j=1,\cdots, r-2$, where the constant $c$ is fixed by the constant in \eqref{broad3} and the normalized residue pairing.

We have $\Psi(x)\bullet\Psi(y)=0$ by simply checking the formula \eqref{selection-non-primary}. This coincides with $xy=0$ in $\Jac(W^T)$.

The rest of the proof are the same as that in Lemma 4.5, Lemma 4.6, and Lemma 4.7 in \cite{K}. For the reader's convenience, we sketch a proof here for $W=x^2+xy^3+yz^3$. The other case can be treated similarly. By \eqref{ring-iso}, we get
$$\Psi(y)=\be_{J^{15}}, \quad \Psi(z)=\be_{J^{13}}.$$
According to \eqref{selection-non-primary}, the nonzero $\LD\cdots\RD_{0,3}^W$ with narrow insertions only is one of following:
\begin{equation}\label{pairing-233}
\LD\be_{J}, \be_{J^j}, \be_{J^{18-j}}\RD_{0,3}^W, \quad j \text{\ is odd}
\end{equation}
or
\begin{equation}\label{3-point-233}
\LD\be_{J^{15}}, \be_{J^{15}}, \be_{J^{7}}\RD_{0,3}^W, \quad \LD\be_{J^{15}}, \be_{J^{13}}, \be_{J^{9}}\RD_{0,3}^W, \quad \LD\be_{J^{13}}, \be_{J^{13}}, \be_{J^{11}}\RD_{0,3}^W, \quad \LD\be_{J^{15}}, \be_{J^{11}}, \be_{J^{11}}\RD_{0,3}^W.
\end{equation}
All the correlators listed above are concave. Furthermore, we apply \eqref{eq:line bdle} to get the line bundle degrees. Except for the last correlator in \eqref{3-point-233}, we have
$$\deg(\rho_*\mathscr{L}_i)=-1, \quad i=1,2,3$$
This implies all the bundles $R^1\pi_{*}(\oplus_{i=1}^{n}\LL_i)$ have rank zero.  Applying \eqref{top-euler} for $D=0$, the values of those correlators all equal to 1.
We use those correlators to get, for example,
$$\Psi(y)\bullet\Psi(y)=\LD\be_{J^{15}},\be_{J^{15}},\be_{J^{7}}\RD_{0,3}^W\,\eta^{\be_{J^7},\be_{J^{11}}}\,\be_{J^{11}}=\be_{J^{11}}.$$
Here $\eta^{-,-}$ is defined in \eqref{WDVV}. Similarly, we obtain
$$\Psi(yz)=\be_{J^{9}}, \quad \Psi(z^2)=\be_{J^{7}}, \quad \Psi(y^2z)=\be_{J^{5}}, \quad \Psi(yz^2)=\be_{J^{3}}, \quad \Psi(y^2z^2)=\be_{J^{17}}.$$
The correlators in \eqref{pairing-233} match the normalized residue pairing. For the last correlator
$\LD\be_{J^{15}}, \be_{J^{15}}, \be_{J^{11}}\RD_{0,3}^W,$ we have
$$\deg(\rho_*\mathscr{L}_1)=-1, \quad \deg(\rho_*\mathscr{L}_2)=-2, \quad \deg(\rho_*\mathscr{L}_3)=0.$$
Thus for each fiber (isomorphic to $\mathbb{CP}^1$) of the universal curve $\mathscr{C}$ over $\overline{\mathscr{W}}_{0,3}(J^{15}, J^{15}, J^{11})$, we have
$$H^0(\mathbb{CP}^1,\bigoplus\mathscr{L}_i)=0\oplus0\oplus\C, \quad H^1(\mathbb{CP}^1, \bigoplus\mathscr{L}_i)=0\oplus\C\oplus0.$$
According to the Index Zero axiom in Theorem 4.1.8 \cite{FJR}, this corrlelator equals to the degree of the so-called Witten map form $H^0$ to $H^1$, which sends $(x,y,z)$ to $(\overline{{\partial W\over \partial x}},\overline{{\partial W\over \partial y}}, \overline{{\partial W\over \partial z}})$. In this case, we get
$$\LD\be_{J^{15}}, \be_{J^{11}}, \be_{J^{11}}\RD_{0,3}^W=-3.$$
From this, we check that
$$\Psi(y)\bullet\Psi(y^2)=-3\Psi(z^2).$$
This coincides with the last relation in $\Jac(W^T)$, i.e., $y^3+3z^2=0.$

Finally, we list the table for each vector space isomorphism.

If
 $W=x^2+xy^3+yz^3$, the vector space isomorphism $\Psi:\Jac(W^T)\overset{\cong}{\to}H_W$ is
  \begin{table}[H]
         \resizebox{\columnwidth}{!}{
\begin{tabular}{|c||c|c|c|c|c|c|c|c|c|c|c|}
   \hline
  $H_W$  & $\be_{J}$ &  $\be_{J^{15}}$ & $\be_{J^{13}}$& $\sqrt{-\frac{3}{K_{3,3}}}y^2\be_{J^{12}}$ & $\be_{J^{11}}$  & $\be_{J^9}$ & $\be_{J^7}$& $\sqrt{-3^3K_{3,3}}y^2\be_{J^{6}}$ & $\be_{J^5}$ & $\be_{J^3}$   & $\be_{J^{17}}$   \\
   \hline
   ${\rm Jac}(W^T)$  & $1$ & $y$ & $z$ & $x$ & $y^2$  & $yz$ & $z^2$ & $xz$ & $y^2z$ & $yz^2$ & $y^2z^2$  \\
   \hline
 \end{tabular}
 }
 \end{table}

If $W=x^2+xy^2+yz^4$, then the vector space isomorphism is given by
  \begin{table}[H]
         \resizebox{\columnwidth}{!}{
\begin{tabular}{|c||c|c|c|c|c|c|c|c|c|c|c|}
   \hline
   $H_W$  & $\be_{J}$ & $\be_{J^{13}}$ & $\sqrt{-\frac{2}{K_{2,4}}}y\be_{J^{12}}$ & $\be_{J^{11}}$ & $\be_{J^9}$  & $y\be_{J^{8}}$ & $\be_{J^7}$ & $\be_{J^5}$ & $\sqrt{-2^3K_{2,4}}y\be_{J^4}$ & $\be_{J^3}$  & $\be_{J^{15}}$   \\
   \hline
 ${\rm Jac}(W^T)$  & $1$ & $z$ & $x$ & $y$ & $z^2$ & $xz$ & $yz$ & $z^3$ & $xz^2$ & $yz^2$ & $yz^3$  \\
   \hline
 \end{tabular}
 }
 \end{table}

\end{proof}

We will give explicit formulas of the isomorphism $\Psi$ of all other cases in the appendix. Those isomorphisms $\Psi$ turn out to identify the ancestor total potential of the FJRW theory of $(W,G_W)$ with that of Saito-Givental theory of $W^T$ up to a rescaling.

Next we compute the FJRW invariants in Proposition \ref{thm:FJRW}.
We introduce a new notation
\begin{equation}\label{ring-notation}
\be_{\phi}:=\Psi(\phi), \quad \phi\in \Jac(W^T).
\end{equation}
Due to the above conventions, the second part of Proposition \ref{thm:FJRW} is simplified as follows.
\begin{prop}\label{thm-reformulathm1}
Let $M_i^T$  be the $i$-th monomial of $W^T$ with the ordering in Table \ref{tab-exceptional-singularities}. We have
\begin{equation}\label{4-point-prop}
\LD \be_{x_i},\be_{x_i},\be_{M_i^T/x_i^2},\be_{\phi_\mu}\RD_{0}^{W}={q_i}, \quad \forall i=1, \cdots, n.
\end{equation}
\end{prop}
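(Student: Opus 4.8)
The plan is to compute each of the $n$ four-point functions in \eqref{4-point-prop} by reducing it to an integral over $\overline{\mathcal{M}}_{0,4}\cong\mathbb{CP}^1$, organizing the $14$ polynomials according to whether the relevant correlator is concave. For each $i$ I would first read off from $\Psi$ (through \eqref{ring-iso} and \eqref{broad-gen}) the four group elements $\ga_1,\dots,\ga_4\in G_W$ decorating $\be_{x_i},\be_{x_i},\be_{M_i^T/x_i^2},\be_{\phi_\mu}$, expressed via the columns $\rho_k$ of $E_W^{-1}$ and the element $J$, and record the phases $\Theta_i^{\ga_j}\in[0,1)$. The selection rule \eqref{selection-non-primary} then confirms that these correlators are dimensionally admissible; combined with the line-bundle criterion \eqref{eq:line bdle} it shows that the obstruction bundle $R^1\pi_*(\oplus_i\LL_i)$ has rank $D=1$, so by \eqref{top-euler} the invariant is the degree of a divisor class on $\overline{\mathcal{M}}_{0,4}$.

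For the concave correlators, those for which all four insertions are narrow and \eqref{cond:concave} holds, I would apply \eqref{eq:OGRR} directly. Since on $\overline{\mathcal{M}}_{0,4}$ each of $\kappa_1$, $\psi_j$ and $[\Gamma_{\rm cut}]$ integrates to $1$, integrating \eqref{eq:OGRR} collapses the invariant to the finite arithmetic sum
\[
\LD \be_{\ga_1},\be_{\ga_2},\be_{\ga_3},\be_{\ga_4}\RD_0^W
=\sum_{i=1}^n\Bigl(\tfrac{B_2(q_i)}{2}-\sum_{j=1}^4\tfrac{B_2(\Theta_i^{\ga_j})}{2}+\sum_{\Gamma_{\rm cut}}\tfrac{B_2(\Theta_i^{\ga_{\Gamma_{\rm cut}}})}{2}\Bigr).
\]
The inputs are now purely combinatorial: the phases $\Theta_i^{\ga_j}$ are already fixed, and the admissible boundary graphs $\Gamma_{\rm cut}$ together with their node decorations are singled out by \eqref{eq:line bdle} applied to each of the three partitions of the four marked points. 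Substituting into $B_2(x)=x^2-x+\tfrac16$ and summing over $i=1,\dots,n$, I expect the boundary terms to combine with the $\psi$- and $\kappa_1$-contributions to leave exactly $q_i$. This is a direct, if tedious, check to be run polynomial by polynomial for Table \ref{tab-exceptional-singularities}.

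The remaining correlators are nonconcave: these arise precisely when some $q_i=\tfrac12$, so that $\be_{x_i}=\Psi(x_i)$ is a broad generator (the chain cases $x_1^2+x_1x_2^{a_2}+\cdots$, such as $E_{14}$ and the polynomials $W=x^2+xy^q+yz^r$ of Lemma \ref{lm:non-Krawitz}). Here \eqref{eq:OGRR} does not apply, and I would instead use the WDVV identity \eqref{eq:WDVV} to rewrite $\LD \be_{x_i},\be_{x_i},\be_{M_i^T/x_i^2},\be_{\phi_\mu}\RD_0^W$ as a bilinear combination, against the pairing $\eta$, of three-point correlators. One chooses the factorization so that every resulting three-point function is either narrow and concave, hence computable from the $D=0$ specialization of \eqref{top-euler}, or one of the broad structure constants: for the two-variable chains these equal the classical residue value $-1/a_2$, while for the genuine three-variable cases they are supplied by Lemma \ref{lm:non-Krawitz}. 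The normalization factor $\sqrt{-a_2/K}$ built into $\Psi(x_i)$ in \eqref{broad-gen} is exactly what clears the undetermined constant $K$ and produces the rational number $q_i$.

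The main obstacle will be the nonconcave cases. The broad three-point constants are inaccessible through OGRR and only become usable once \eqref{broad-gen} is justified, which rests on the nonvanishing Lemma \ref{lm:non-Krawitz}, proved through Getzler's relation on $\overline{\mathcal{M}}_{1,4}$; arranging a WDVV reduction all of whose terms are controlled quantities, with correct bookkeeping of the pairing $\eta$, is the delicate point. By contrast the concave cases, though numerous, are mechanical once the phases $\Theta_i^{\ga_j}$ and the admissible $\Gamma_{\rm cut}$ have been tabulated. Finally, as a consistency check one observes that the degenerate instances $M_i^T=x_i^2$, in which $\be_{M_i^T/x_i^2}$ would become the identity and the string equation would collapse the four-point function to a pairing rather than to $q_i$, do not occur among the polynomials of Table \ref{tab-exceptional-singularities} and hence need not be treated.
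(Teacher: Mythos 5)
Your overall skeleton (concave correlators via the orbifold GRR formula \eqref{eq:OGRR}, the remaining ones via WDVV) is the same as the paper's, and your treatment of the concave cases is fine. The genuine gap is in your plan for the nonconcave correlators: you propose to rewrite $\LD \be_{x_i},\be_{x_i},\be_{M_i^T/x_i^2},\be_{\phi_\mu}\RD_{0}^{W}$ ``as a bilinear combination, against the pairing $\eta$, of three-point correlators.'' No such reduction exists. In the relation \eqref{eq:WDVV} with $S_4=0$, every term is a \emph{four-point} correlator whose insertions have been redistributed through the ring product; equivalently, in raw form every summand of the WDVV identity is a product $(3\text{-point})\times(4\text{-point})$, so WDVV can only express one four-point correlator in terms of other four-point correlators, never in terms of three-point data alone. (If four-point invariants were determined by the ring $\Jac(W^T)\cong H_W$, the genus-zero theory would be fixed by the ring structure, and both Proposition \ref{thm-reformulathm1} and the reconstruction Lemma \ref{reconstruction-lemma} would be vacuous.) The recursion must therefore bottom out on four-point correlators computed by some other means. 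This is exactly what the paper does: it uses ring relations (e.g.\ $\be_x\bullet\be_x=-q\,\be_{y^{q-1}z}$ for the chains, which rests on Proposition \ref{non-Krawitz} and Lemma \ref{lm:non-Krawitz}, or $y^2=-2x$ for the mirror of $W_{13}$) together with \eqref{eq:WDVV} to trade each nonconcave four-point correlator for \emph{concave} four-point correlators, which are then evaluated by \eqref{eq:OGRR} and serve as the seeds. Your scheme has no seeds, so it cannot output the values $q_i$.

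Two further factual errors would also derail the case analysis. First, nonconcavity does \emph{not} arise ``precisely when some $q_i=\frac{1}{2}$.'' For the loop $W=x^2z+xy^2+yz^3$ (mirror of $S_{12}$) no weight equals $\frac{1}{2}$ and all insertions of $\LD\be_x,\be_x,\be_y,\be_{\phi_\mu}\RD_0^W$ and $\LD\be_y,\be_y,\be_z,\be_{\phi_\mu}\RD_0^W$ are narrow, yet both correlators are nonconcave; likewise $\LD\be_y,\be_y,\be_z,\be_{\phi_\mu}\RD_0^W$ for $W=x^2+xy^2+yz^4$ is nonconcave with all narrow insertions. Conversely, for $W=x^2y+xy^3+z^3$ the generator $\Psi(x)=x\be_{J^{10}}$ is broad although no weight is $\frac{1}{2}$. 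So your dichotomy misclassifies several of the correlators that actually require the WDVV argument. Second, your closing consistency check is false: the degenerate instances $M_i^T=x_i^2$ \emph{do} occur in Table \ref{tab-exceptional-singularities} --- the first monomial of $W^T$ is $x^2$ for the $E_{14}$, $Z_{13}$ and $W_{13}$ representatives --- and there the four-point correlator vanishes by the string equation. These cases are removed by the hypothesis ``whenever $M_i^T\neq x_i^2$'' in \eqref{4-point} (implicit in the restated proposition), not by their absence from the table.
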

\begin{proof}
We classify all the correlators in \eqref{4-point-prop} into concave correlators and nonconcave correlators. For the concave correlators, we use \eqref{eq:OGRR} to compute. For the nonconcave correlators, we use WDVV to reconstruct them from concave correlators and again use \eqref{eq:OGRR}.  We will freely interchange the notation
\begin{align}
      (x_1, x_2, x_3)=(x, y, z).
\end{align}

Let us start with concave correlators. As an example, we compute $\LD\be_{x},\be_x,\be_{x^{p-2}},\be_{\phi_\mu}\RD_0^W$ for $W=x^p+y^q$. The computation of all the other concave corrlators in \eqref{4-point-prop} follows similarly. For $W=x^p+y^q$, we recall that for
$\gamma_{i,j}\in G_W\cong\mu_p\times\mu_q,$ we have
$\Theta_1^{\gamma_{i,j}}=\frac{i}{p}, \Theta_2^{\gamma_{i,j}}=\frac{j}{q}.$  All the sectors are narrow and $\be_{\gamma_{i,j}}=\be_{x^{i-1}y^{j-1}}$ with our notation conventions. According to the line bundle criterion \eqref{eq:line bdle}, we know for $\LD\be_{x},\be_x,\be_{x^{p-2}},\be_{\phi_\mu}\RD_0^W$,  $$\deg\rho_*\LL_1=-2, \deg\rho_*\LL_2=-1.$$  Thus $\pi_*\LL_1=\pi_*\LL_2=0$ and the correlator is concave. Moreover, $R^1\pi_*\LL_2=0$ and the nonzero contribution of the virtual cycle only comes from $R^1\pi_*\LL_1$.
Now we can apply \eqref{eq:OGRR}.
There are three decorated dual graphs in $\mathbf{\Gamma}_{\rm cut}$, where we simply denote $\be_{i,j}:=\be_{\gamma_{i,j}}$,
\begin{figure}[h]
\centering
\setlength{\unitlength}{1.0mm}
\begin{picture}(120,20)

	\put(0,9){\line(-3,4){5}}
    \put(0,9){\line(-3,-4){5}}
	\put(0,9){\line(1,0){20}}
    \put(20,9){\line(3,4){5}}
    \put(20,9){\line(3,-4){5}}
	
	\put(-6,17){$\be_{2,1}$}
	\put(-6,0){$\be_{2,1}$}
	\put(24,17){$\be_{p-1,1}$}
	\put(24,0){$\be_{p-1,q-1}$}
	\put(1,11){$\ga_{\Gamma_1}$}
	\put(13,11){$\ga_{\Gamma_1}^{-1}$}

	\put(50,9){\line(-3,4){5}}
    \put(50,9){\line(-3,-4){5}}
	\put(50,9){\line(1,0){20}}
    \put(70,9){\line(3,4){5}}
    \put(70,9){\line(3,-4){5}}
	
	\put(44,17){$\be_{2,1}$}
	\put(44,0){$\be_{p-1,1}$}
	\put(74,17){$\be_{2,1}$}
	\put(74,0){$\be_{p-1,q-1}$}
	\put(51,11){$\ga_{\Gamma_2}$}
	\put(63,11){$\ga_{\Gamma_2}^{-1}$}

	\put(100,9){\line(-3,4){5}}
    \put(100,9){\line(-3,-4){5}}
	\put(100,9){\line(1,0){20}}
    \put(120,9){\line(3,4){5}}
    \put(120,9){\line(3,-4){5}}
	
	\put(94,17){$\be_{2,1}$}
	\put(94,0){$\be_{p-1,q-1}$}
	\put(124,17){$\be_{2,1}$}
	\put(124,0){$\be_{p-1,1}$}
	\put(101,11){$\ga_{\Gamma_3}$}
	\put(113,11){$\ga_{\Gamma_3}^{-1}$}

\end{picture}
\end{figure}

The decorations of the boundary classes are $\Theta_1^{\ga_{\Gamma_i}}=\frac{p-3}{p},0,0$ for $i=1,2,3$. We obtain
\begin{eqnarray*}
&&\LD\be_{2,1},\be_{2,1},\be_{p-1,1},\be_{p-1,q-1}\RD_{0}^W\\
&=&\int_{\overline{\mathcal{M}}_{0,4}}\Lambda_{0,4}^W(\be_{2,1},\be_{2,1},\be_{p-1,1},\be_{p-1,q-1})\\
&=&\frac{1}{2}\left(B_2(\frac{1}{p})-2B_2(\frac{2}{p})-2B_2(\frac{p-1}{p})+2B_2(0)+B_2(\frac{p-3}{p})\right)\\
&=&\frac{1}{p}.
\end{eqnarray*}

All the nonconcave correlators in \eqref{4-point-prop} are listed as follows:
\begin{itemize}

\item $\LD\be_y,\be_y,\be_{z},\be_{\phi_\mu}\RD_{0}^{W}$ for 3-Chain $W=x^2+xy^2+yz^4$.

\item $\LD\be_{x},\be_x,\be_{y},\be_{\phi_\mu}\RD_{0}^{W}$ for 3-Chain $W=x^2+xy^q+yz^r$, $(q,r)=(3,3)$ or $(2,4)$.

\item $\LD \be_x,\be_x,\be_y,\be_{\phi_{\mu}}\RD_{0}^{W}$ and $\LD \be_y,\be_y,\be_z,\be_{\phi_\mu}\RD_{0}^{W}$ for 3-Loop $W=x^2z+xy^2+yz^3$.

\item $\LD\be_{x},\be_{x},\be_{y},\be_{\phi_\mu}\RD_{0}^{W}$ for $W=x^2+xy^4+z^3$.

\item $\LD\be_x,\be_x,\be_y,\be_{\phi_{\mu}}\RD_{0}^{W}$ for $W=x^2y+xy^3+z^3$.

\item $\LD \be_x,\be_y,\be_y,\be_{\phi_{\mu}}\RD_{0}^{W}$ for $W=x^2y+y^2+z^4$.
\end{itemize}

For the nonconcave correlators, we will use the WDVV equations and the ring relations to reconstruct them from concave correlators. Let us start with the value of $\LD\be_y,\be_y,\be_{y^{q-2}z},\be_{\phi_\mu}\RD_{0}^{W}$ in a 3-Chain $W=x^2+xy^2+yz^4$. Since $\phi_\mu=yz^3\in {\rm Jac}(W^T)$ and $\be_y\bullet\be_{yz}=0$, we get
\begin{equation*}
\LD\be_z,\be_y,\be_{yz}\bullet\be_{z^2},\be_{y}\RD_{0}^{W}=\LD\be_z,\be_y,\be_{yz},\be_{z^2}\bullet\be_y\RD_{0}^{W}-\LD\be_z,\be_y\bullet\be_{y},\be_{yz},\be_{z^2}\RD_{0}^{W}=0-(-4)\frac{1}{16}=\frac{1}{4}.
\end{equation*}
The first equality follows from the WDVV equation\eqref{eq:WDVV}. We also use  $\be_y\bullet\be_{yz}=0$. Both $\LD\be_z,\be_y,\be_{yz},\be_{z^2}\bullet\be_y\RD_{0}^{W}$ and $\LD\be_z,\be_y\bullet\be_{y},\be_{yz},\be_{z^2}\RD_{0}^{W}$ are concave correlators and can be computed by \eqref{eq:OGRR}. For other nonconcave correlators, we will list the WDVV equations. The concavity computation is checked easily.
 For 3-Chain $W=x^2+xy^q+yz^r$, $(q,r)=(3,3)$ or $(2,4)$, $\be_{\phi_\mu}=\be_{y^{q-1}z^{r-1}}$.
$$\LD\be_{y},\be_x,\be_{\phi_\mu},\be_{x}\RD_{0}^{W}=-\LD\be_y,\be_x\bullet\be_{x},\be_y,\be_{y^{q-2}z^{r-1}}\RD_{0}^{W}=q\LD\be_{y},\be_{y^{q-1}z},\be_{y},\be_{y^{q-2}z^{r-1}}\RD_{0}^{W}=\frac{1}{2}.$$
For 3-Loop $W=x^2z+xy^2+yz^3$, $\be_{\phi_\mu}=\be_{xyz^2}$. We get
$$
\begin{array}{l}
\LD \be_y,\be_x,\be_{xy}\bullet\be_{z^2},\be_x\RD_{0}^{W}=\LD\be_y,\be_x,\be_{xy},\be_{z^2}\bullet\be_x\RD_{0}^{W}-\LD \be_y,\be_x\bullet\be_x,\be_{xy},\be_{z^2}\RD_{0}^{W}=\frac{1}{13}-(-2)\frac{2}{13}=\frac{5}{13}.
\\
\LD\be_z,\be_y,\be_z\bullet\be_{xyz},\be_y\RD_{0}^{W}=\LD\be_z,\be_y\bullet\be_z,\be_{xyz},\be_y\RD_{0}^{W}-\LD \be_z,\be_y\bullet\be_y,\be_z,\be_{xyz}\RD_{0}^{W}=\frac{1}{13}-(-3)\frac{1}{13}=\frac{4}{13}.
\end{array}
$$
For $W=x^2+xy^4+z^3$, $\be_{x}$ is broad. However,
$$\LD\be_{y},\be_{x},\be_{\phi_\mu},\be_{x}\RD_{0}^{W}=-\LD\be_y,\be_x\bullet\be_x,\be_y,\be_{y^{2}z}\RD_{0}^{W}=4\LD\be_y,\be_{y^{3}},\be_y,\be_{y^{2}z}\RD_{0}^{W}=\frac{1}{2}.$$
For $W=x^2y+xy^3+z^3$, we get
$$
\begin{array}{l}
\LD \be_y,\be_x,\be_{xy}\bullet\be_{yz},\be_x\RD_{0}^{W}+\LD\be_y,\be_x\bullet\be_{x},\be_{xy},\be_{yz}\RD_{0}^{W}
=\LD \be_y,\be_x,\be_{xy},\be_{yz}\bullet\be_{x}\RD_{0}^{W}\\
=-\frac{1}{2}\LD \be_y,\be_x,\be_{y}\bullet\be_{y^2z},\be_{xy}\RD_{0}^{W}
=-\frac{1}{2}\LD \be_y,\be_{xy},\be_{xy},\be_{y^2z}\RD_{0}^{W}=-\frac{1}{2}\LD \be_y,\be_{xy},\be_{y}\bullet\be_{yz},\be_{xy}\RD_{0}^{W}\\
=-\LD\be_y,\be_{xy},\be_{yz},\be_{xy^2}\RD_{0}^{W}.
\end{array}
$$
The first, third and last equalities are WDVV equations. Finally, we get
$$\LD\be_x,\be_x,\be_y,\be_{xy^2z}\RD_{0}^{W}=-\LD\be_y,\be_x\bullet\be_{x},\be_{xy},\be_{yz}\RD_{0}^{W}-\LD\be_y,\be_{xy},\be_{yz},\be_{xy^2}\RD_{0}^{W}=-(-\frac{1}{5})-(-\frac{1}{5})=\frac{2}{5}.$$
For $W=x^2y+y^2+z^4$, we get
$$
\begin{array}{l}
\LD \be_x,\be_y,\be_{xy}\bullet\be_{z^2},\be_y\RD_{0}^{W}
=\LD\be_x,\be_y,\be_{xy},\be_{z^2}\bullet\be_y\RD_{0}^{W}-\LD \be_x,\be_y\bullet\be_y,\be_{xy},\be_{z^2}\RD_{0}^{W}\\
=\left(\LD\be_y,\be_x,\be_x,\be_{y}\bullet\be_{yz^2}\RD_{0}^{W}
-\LD\be_y,\be_{x}\bullet\be_{yz^2},\be_x,\be_y\RD_{0}^{W}\right)-\LD \be_x,\be_y\bullet\be_y,\be_{xy},\be_{z^2}\RD_{0}^{W}.
\end{array}
$$
Combining  this equation and $y^2=-2x$, we get
$$
\LD \be_x,\be_y,\be_{xyz^2},\be_y\RD_{0}^{W}=
-\LD\be_y,\be_x,\be_x,\be_{xz^2}\RD_{0}^{W}
+\LD \be_x,\be_x,\be_{xy},\be_{z^2}\RD_{0}^{W}
=-(-\frac{1}{8})+\frac{1}{4}=\frac{3}{8}.
$$
\end{proof}

\subsection{Nonvanishing invariants}\label{sec:non-K}
In this subsection, we will prove Lemma \ref{lm:non-Krawitz}.
Our tool is the Getzler's relation \cite{Get}, which is a linear relation between codimension two cycles in $H_*(\overline{\mathcal{M}}_{1,4},\mathbb{Q})$. Let us briefly introduce this relation here. Consider the dual graph,
\begin{center}

\begin{picture}
(50,20)

	\put(-20,9){$\Delta_{0}\cdot\Delta_{\{234\}}:=$}

    \put(15,10){\circle{10}}


    \put(20,10){\line(2,1){10}}
    \put(20,10){\line(2,-1){10}}
    \put(30,5){\line(2,1){10}}
    \put(30,5){\line(2,-1){10}}

    \put(30,5){\line(1,0){10}}

    \put(32,14){1}
    \put(41,9){2}
    \put(41,4){3}
    \put(41,-1){4}

\end{picture}
\end{center}
This graph represents a codimension-two stratum in $\overline{\mathcal{M}}_{1,4}$: A vertex represents a genus-0 component. An edge connecting two vertices (including a circle connecting the same vertex) represents a node, a tail (or half-edge) represents a marked point on the component of the corresponding vertex. Let $\Delta_{0,3}$ be the $S_4$-invariant of the codimension-two stratum in $\overline{\mathcal{M}}_{1,4}$,
\begin{equation*}
\Delta_{0,3}=\Delta_{0}\cdot\Delta_{\{123\}}+\Delta_{0}\cdot\Delta_{\{124\}}+\Delta_{0}\cdot\Delta_{\{134\}}+\Delta_{0}\cdot\Delta_{\{234\}}.
\end{equation*}
We denote $\delta_{0,3}=[\Delta_{0,3}]$ the corresponding cycle in $H_4(\overline{\mathcal{M}}_{1,4},\mathbb{Q})$. We list the corresponding unordered dual graph for other strata below. A filled circle (as a vertex) represents a genus-1 component. See \cite{Get} for more details.

\begin{center}
\begin{picture}(50,20)

	\put(-40,15){$\delta_{2,2}:$}

    \put(-30,9){\circle*{2}}

	\put(-40,9){\line(-3,4){5}}
    \put(-40,9){\line(-3,-4){5}}
	\put(-40,9){\line(1,0){9}}
	\put(-29,9){\line(1,0){9}}
    \put(-20,9){\line(3,4){5}}
    \put(-20,9){\line(3,-4){5}}


    \put(15,9){\circle*{2}}
	\put(10,15){$\delta_{2,3}:$}

	\put(5,9){\line(1,0){9}}
	\put(16,9){\line(1,0){9}}
    \put(25,9){\line(2,1){9}}
    \put(25,9){\line(2,-1){9}}
    \put(34,4.5){\line(2,1){9}}
    \put(34,4.5){\line(2,-1){9}}

    \put(60,9){\circle*{2}}
	\put(60,15){$\delta_{2,4}:$}


	\put(61,9){\line(1,0){9}}
    \put(70,9){\line(2,1){9}}
	\put(70,9){\line(1,0){9}}
    \put(70,9){\line(2,-1){9}}
    \put(79,4.5){\line(2,1){9}}
    \put(79,4.5){\line(2,-1){9}}

\end{picture}
\end{center}

\begin{center}

\begin{picture}
(50,20)

	\put(-40,17){$\delta_{3,4}:$}
	\put(10,17){$\delta_{0,4}:$}
	\put(60,17){$\delta_{\beta}:$}

    \put(-40,9){\circle*{2}}


	\put(-21,4.5){\line(1,0){9}}
	\put(-39,9){\line(1,0){9}}
    \put(-30,9){\line(2,1){9}}
    \put(-30,9){\line(2,-1){9}}
    \put(-21,4.5){\line(2,1){9}}
    \put(-21,4.5){\line(2,-1){9}}

    \put(15,9){\circle{10}}


    \put(29,9){\line(4,3){9}}
    \put(29,9){\line(3,1){9}}
	\put(20,9){\line(1,0){9}}
    \put(29,9){\line(4,-3){9}}
    \put(29,9){\line(3,-1){9}}


    \put(75,9){\circle{10}}

    \put(80,9){\line(2,1){9}}
    \put(80,9){\line(2,-1){9}}
    \put(70,9){\line(-2,1){9}}
    \put(70,9){\line(-2,-1){9}}
\end{picture}
\end{center}

In \cite{Get}, Getzler found the following identity:
\begin{equation}\label{eq:Getzler}
  12\delta_{2,2}+4\delta_{2,3}-2\delta_{2,4}+6\delta_{3,4}+\delta_{0,3}+\delta_{0,4}-2\delta_{\beta}=0\in H_4(\overline{\mathcal{M}}_{1,4},\mathbb{Q}).
\end{equation}

\noindent{\bf Proof of Lemma \ref{lm:non-Krawitz}:}
We start with $W=x^2+xy^2+yz^4$. We normalize
$$u:=y\be_{J^{12}}, \quad v=\sqrt{-2}y\be_{J^{8}}, \quad w=-2y\be_{J^{4}}.$$
The nonvanishing pairings between these broad elements are $\LD u, w\RD=1, \LD v,v\RD=1.$

We integrate $\Lambda_{1,4}^W(\be_{J^9},\be_{J^9},\be_{J^9},\be_{J^9})$ over the Getzler's relation \eqref{eq:Getzler}.
The Composition law \cite[Theorem 4.1.8 (6)]{FJR} in FJRW theory implies
\begin{eqnarray*}
&&\int_{\delta_{0,3}}\Lambda_{1,4}^W(\be_{J^9},\be_{J^9},\be_{J^9},\be_{J^9})\\
&&=4\LD \be_{J^9},\be_{J^9},\be_{J^9},\be_{J^7}\RD_0^W\left(\sum_{\alpha,\beta}\eta^{\alpha,\beta}\LD\be_{J^9},\be_{J^9},\alpha,\beta\RD_0^W\right)\\
&&=4\LD \be_{J^9},\be_{J^9},\be_{J^9},\be_{J^7}\RD_0^W
\left(
\begin{array}{c}
2\LD\be_{J^9},\be_{J^9},\be_{J^{13}},\be_{J^3}\RD_0^W+2\LD\be_{J^9},\be_{J^9},\be_{J^{11}},\be_{J^5}\RD_0^W+\\
2\LD\be_{J^9},\be_{J^9},\be_{J^9},\be_{J^7}\RD_0^W+2\LD\be_{J^9},\be_{J^9},u,w\RD_0^W+\LD\be_{J^9},\be_{J^9},v,v\RD_0^W
\end{array}
\right).
\end{eqnarray*}
The factor $4$ comes from that there are $4$ strata in $\Delta_{0,3}$ which contribute. We have the factor $2$ for $\LD\be_{J^9},\be_{J^9},\be_{J^{13}},\be_{J^3}\RD_0^W$ since both $\alpha=\be_{J^{13}}$ and $\alpha=\be_{J^{3}}$ give the same correlator. Finally, $\be_{J}$ is the identity, and the string equation implies
$\LD\be_{J^9},\be_{J^9},\be_{J^{15}},\be_{J}\RD_0^W=0$.
There are two correlators contain broad sectors, we simply denote
$$C_1:=\LD\be_{J^9},\be_{J^9},v,v\RD_0^W, \quad C_2:=\LD\be_{J^9},\be_{J^9},u,w\RD_0^W.$$
We can calculate the concave correlators using orbifold-GRR formula in \eqref{eq:OGRR} and get
$$\LD\be_{J^9},\be_{J^9},\be_{J^{13}},\be_{J^3}\RD_0^W=\frac{1}{4}, \quad\LD\be_{J^9},\be_{J^9},\be_{J^{11}},\be_{J^5}\RD_0^W=-\frac{1}{8},\quad\LD\be_{J^9},\be_{J^9},\be_{J^9},\be_{J^7}\RD_0^W=\frac{1}{8}.$$
This implies
$$\int_{\delta_{0,3}}\Lambda_{1,4}^W(\be_{J^9},\be_{J^9},\be_{J^9},\be_{J^9})=C_2+\frac{C_1}{2}+\frac{1}{4}.$$
Similarly, we get
$$\int_{\delta_{\beta}}\Lambda_{1,4}^W(\be_{J^9},\be_{J^9},\be_{J^9},\be_{J^9})=6C_2^2+3C_1^2+\frac{9}{16},\quad \int_{\delta_{0,4}}\Lambda_{1,4}^W(\be_{J^9},\be_{J^9},\be_{J^9},\be_{J^9})=\frac{165}{128}.$$
The last equality requires the computation for a genus-$0$ correlator with $5$ marked points. It is reconstructed from some known $4$-point correlators by WDVV equations.
On the other hand, using the homological degree \eqref{degree-vir}, we conclude the vanishing of the integration of $\Lambda_{1,4}^W(\be_{J^9},\be_{J^9},\be_{J^9},\be_{J^9})$ over those strata which contain genus-1 component. Thus
$$\int_{12\delta_{2,2}+4\delta_{2,3}-2\delta_{2,4}+6\delta_{3,4}}\Lambda_{1,4}^W(\be_{J^9},\be_{J^9},\be_{J^9},\be_{J^9})=0.$$
Now apply Getzler's relation \eqref{eq:Getzler}, we get
\begin{equation}\label{gel}
-12C_2^2+C_2-6C_1^2+\frac{C_1}{2}+\frac{53}{128}=0.
\end{equation}
On the other hand, since $\be_{J^9}=\be_{J^{13}}\bullet\be_{J^{13}}$, we apply WDVV equations and get
\begin{equation*}
\left\{
\begin{array}{l}
\LD u,u,\be_{J^9}\RD_0^W=\Big(\LD\be_{J^{13}},u,v\RD_0^W\Big)^2,\\
\LD\be_{J^9},\be_{J^9},v,v\RD_0^W+\LD\be_{J^{13}},\be_{J^{13}},\be_{J^{9}},\be_{J^{15}}\RD_0^W=2\LD\be_{J^9},\be_{J^{13}},v,w\RD_0^W\LD\be_{J^{13}},u,v\RD_0^W,\\
\LD\be_{J^9},\be_{J^9},u,w\RD_0^W+\LD\be_{J^{13}},\be_{J^{13}},\be_{J^{9}},\be_{J^{15}}\RD_0^W=\LD\be_{J^9},\be_{J^{13}},v,w\RD_0^W\LD\be_{J^{13}},u,v\RD_0^W.
\end{array}
\right.
\end{equation*}
If $\LD u,u,\be_{J^9}\RD_0^W=0$, then $\LD\be_{J^{13}},u,v\RD_0^W=0$ and the rest two equations above implies
$$C_1=C_2=-\LD\be_{J^{13}},\be_{J^{13}},\be_{J^{9}},\be_{J^{15}}\RD_0^W=-\frac{3}{16},$$
where the last equality follows from \eqref{eq:OGRR}. However, this contradicts with formula \eqref{gel}.

Next we consider $W=x^2+xy^3+yz^3$. We denote
$$\left\{
\begin{array}{l}
u:=y^2\be_{J^{12}}, \quad w:=-3y^{2}\be_{J^6},\\
C_1:=\LD\be_{J^{13}},\be_{J^{13}},w,w\RD_0^W, \quad C_2:=\LD\be_{J^7},\be_{J^{13}},u,w\RD_0^W, \quad C_3:=\LD\be_{J^7},\be_{J^7},u,u\RD_0^W.
\end{array}
\right.$$
We integrate $\Lambda_{1,4}^W(\be_{J^{13}},\be_{J^{13}},\be_{J^7},\be_{J^7})$ over the Getzler's relation \eqref{eq:Getzler} and get
\begin{equation}\label{Q11-gel}
-8C_2^2-\frac{2C_2}{3}-2C_1C_3+\frac{8}{81}=0
\end{equation}
On the other hand, since $\be_{J^7}=\be_{J^{13}}\bullet\be_{J^{13}}$, the WDVV equations imply
\begin{equation*}
\left\{
\begin{array}{l}
\LD\be_{J^7},\be_{J^{13}},u,w\RD_0^W+\LD\be_{J^{13}},\be_{J^{13}},\be_{J^{13}},\be_{J^{17}}\RD_0^W=\LD\be_{J^{13}},\be_{J^{13}},w,w\RD_0^W\LD\be_{J^{13}},u,u\RD_0^W,\\
\LD\be_{J^7},\be_{J^7},u,u\RD_0^W=\LD\be_{J^7},\be_{J^{13}},u,w\RD_0^W\LD\be_{J^{13}},u,u\RD_0^W.
\end{array}
\right.
\end{equation*}
Now $\LD\be_{J^{13}},u,u\RD_0^W=0$ implies $C_2=-\frac{5}{18}$ and $C_3=0$. This contradicts with \eqref{Q11-gel}.
\qed

\section{B-model: Saito's theory of primitive form}\label{sec3}
Throughout this section, we consider the Landau-Ginzburg B-model defined by
$$
f: X=\C^n\to \C,
$$
where $f$ is a weighted homogeneous polynomial with isolated singularity at the origin:
$$
   f(\lambda^{q_1}x_1,\cdots, \lambda^{q_n}x_n)=\lambda f(x_1,\cdots, x_n).
$$
Recall that $q_i$ are called the weights of  $x_i$, and    the central charge of $f$ is defined by
$$
  \hat c_f=\sum_i(1-2q_i).
$$
Associated to $f$, the third author has introduced the concept of a primitive form  \cite{Saito-primitive}, which, in particular, induces  a Frobenius manifold structure (sometimes called a flat structure) on  the local universal deformation space of $f$.   This gives rise to the genus zero correlation functions in the Landau-Ginzburg B-model, which are conjectured to be equivalent to the FJRW-invariants on the mirror singularities.

The general existence of primitive forms for local isolated singularities is proved by M.Saito \cite{Saito-existence} via Deligne's mixed Hodge theory. In the case for $f$ being a weighted homogeneous polynomial, the existence problem is greatly simplified due to the semi-simplicity of the monodromy \cite{Saito-primitive, Saito-existence}. However,  explicit formulas of   primitive forms were only known for $ADE$ and simple elliptic singularities \cite{Saito-primitive} (i.e., for $\hat c_f\leq 1$). This led to the difficulty of computing  correlation functions in the Landau-Ginzburg B-model, and has become one of the main obstacles toward proving mirror symmetry between Landau-Ginzburg models.

Based on the recent idea of perturbative approach to primitive forms \cite{LLSaito}, in this section we will develop a general perturbative method to compute the Frobenius manifolds in the Landau-Ginzburg B-model. This is applied to the 14 exceptional unimodular singularities. With the help of certain reconstruction type theorem from the WDVV equation (see e.g. Lemma \ref{reconstruction-lemma}), it completely solves the computation problem in the Landau-Ginzburg B-model at genus zero.

\subsection{Higher residue and good basis}\label{subsection-higher-residue}
 Let $\mbf{0}\in X=\C^n$ be the origin. Let $\Omega^k_{X,\mbf{0}}$ be the germ of holomorphic $k$-forms at $\mbf{0}$. In this paper we will work with the following space \cite{Saito-residue}
$$
  \mc{H}_f^{(0)}:=\Omega^n_{X,\mbf{0}}[[z]]/(df+zd)\Omega^{n-1}_{X,\mbf{0}}
$$
which is a formally completed version of the \emph{Brieskorn lattice} associated to $f$. Given a differential form $\varphi \in \Omega^n_{X,\mbf{0}}$, we will use $[\varphi]$ to represent its class in $\mc H_f^{(0)}$.

 There is a natural semi-infinite Hodge filtration on $ \mc{H}_f^{(0)}$ given by
$\mc{H}_f^{(-k)}:=z^k \mc{H}_f^{(0)}$, with graded pieces
$$
    \mc{H}_f^{(-k)}/  \mc{H}_f^{(-k-1)} \iso \Omega_f, \quad \mbox{where}\ \Omega_f:= \Omega_{X,\mbf{0}}^n/ df\wedge \Omega_{X,\mbf{0}}^{n-1}.
$$
In particular, $\mc{H}_f^{(0)}$ is a free $\C[[z]]$-module of rank $\mu=\dim_{\C}\Jac(f)_o$, the Milnor number of $f$. We will also denote the extension to Laurent series by
$$
   \mc{H}_f:= \mc{H}_f^{(0)}\otimes_{\C[[z]]}\C((z)).
$$
There is a natural $\Q$-grading on $\mc{H}_f^{(0)}$ defined by assigning the degrees
$$
   \deg (x_i)=q_i, \quad \deg(dx_i)=q_i, \quad \deg (z)=1.
$$
Then for a homogeneous element of the form
$
  \varphi= z^k g(x_i) dx_1\wedge\cdots\wedge dx_n,
$
we have
$$
  \deg(\varphi)=\deg(g)+k+\sum_i q_i.
$$

In \cite{Saito-residue}, the third author constructed a \emph{higher residue pairing}
$$
   K_f: \mc{H}_f^{(0)}\otimes \mc{H}_f^{(0)}\to z^{n}\C[[z]]
$$
which satisfies the following properties
\begin{enumerate}
\item $K_f$ is equivariant with respect to the $\Q$-grading, i.e.,
$$
   \deg (K_f(\alpha, \beta))=\deg(\alpha)+\deg(\beta)
$$
for homogeneous elements  $\alpha, \beta\in \mc{H}_f^{(0)}$.
\item $K_f(\alpha, \beta)=(-1)^n \overline{K_f(\beta,\alpha)}$, where the $-$ operator takes $z\to -z$.
\item $K_f(v(z)\alpha, \beta)=K_f(\alpha, v(-z)\beta)=v(z)K_f(\alpha,\beta)$ for $v(z)\in \C[[z]]$.
\item The leading $z$-order of $K_f$ defines a pairing
$$
    \mc{H}_f^{(0)}/z  \mc{H}_f^{(0)}\otimes  \mc{H}_f^{(0)}/z  \mc{H}_f^{(0)} \to \C, \quad \alpha\otimes \beta\mapsto \lim_{z\to 0}z^{-n}K_f(\alpha,\beta)
$$
which coincides with the usual residue pairing
$$
   \eta_f: \Omega_f\otimes \Omega_f \to \C.
$$
\end{enumerate}
We remark that the classical residue pairing $\eta_f$ is intrinsically defined up to a nonzero constant.
In the case of weighted homogeneous singularities (for instance for the exceptional unimodular singularities), we will always specify a top degree element $\phi_\mu$ in a weighted homogeneous basis of $\Jac(f)$, and will fix the   constant  such that
  \begin{equation}  \label{normalized-residue22}
      \eta_f (dx_1\cdots dx_n, \phi_\mu dx_1\cdots dx_n)=1.
  \end{equation} We will call it the \textit{normalized} residue pairing.

The last property implies that $K_f$ defines a semi-infinite extension of the residue pairing, which explains the name ``higher residue". It is naturally extended to
$$
    K_f:  \mc{H}_f \otimes  \mc{H}_f \to \C((z))
$$
which we denote by the same symbol. This defines a symplectic pairing $\omega_f$ on $\mc{H}_f$ by
$$
   \omega_f(\alpha, \beta):= \Res_{z=0} z^{-n} K_f(\alpha, \beta)dz,
$$
with $\mc{H}_f^{(0)}$ being a maximal isotropic subspace. Following \cite{Saito-primitive},
\begin{defn}\label{defn-good-section} A good section $\sigma$ is defined by a splitting of the quotient $ \mc{H}_f^{(0)}\to \Omega_f$,
$$
  \sigma: \Omega_f\to  \mc{H}_f^{(0)},
$$
such that: (1) $\sigma$ preserves the $\Q$-grading; (2) 
$K_f(\Im(\sigma), \Im(\sigma))\subset z^n \C$.

A basis of the image $\Im(\sigma)$ of a good section $\sigma$ will be referred to as a good basis of $\mc H_f^{(0)}$.
\end{defn}

\begin{defn} A good opposite filtration $\L$  is defined by a splitting
$$
      \mc{H}_f= \mc{H}_f^{(0)}\oplus \L
$$
such that: (1) $\L$ preserves the $\Q$-grading;  (2) $\L$ is an isotropic subspace; (3) $z^{-1}: \L\to \L$.
\end{defn}
\begin{rmk} Here for $f$ being weighted homogeneous, (1) is a convenient and equivalent statement to the conventional condition that $\nabla^{GM}_{z\pa_z}$ preserves $\mc L$ (see e.g. \cite{LLSaito} for an exposition).

\end{rmk}
\noindent The above two definitions are equivalent. In fact,   a good opposite filtration $\L$ defines the splitting
$
       \sigma: \Omega_f\overset{\iso}{\to} \mc{H}_f^{(0)}\cap z \L
$. Conversely, a good section $\sigma$ gives rise to the good opposite filtration $\L=z^{-1}\Im(\sigma)[z^{-1}]$.
As shown in  \cite{Saito-primitive, Saito-existence},
  the primitive forms   associated to the weighted homogeneous singularities are in one-to-one correspondence with   good sections   (up to a nonzero scalar).
  Therefore, we only introduce the notion of good sections, and refer our readers to loc. cite for precise notion of the primitive forms. We remark that for general isolated singularities, we need the notion of \emph{very good sections} \cite{Saito-existence, Saito-uniqueness} in order to incorporate with the monodromy.

\subsection{The perturbative equation} We start with a good basis $\{[\phi_\alpha d^n \mathbf{x}]\}_{\alpha=1}^\mu$  of $\mc H_f^{(0)}$, where $ d^n\mathbf{x}:=dx_1\cdots dx_n$. In this subsection, we will formulate the perturbative method of \cite{LLSaito} for computing its associated primitive form, flat coordinates and the potential function. The construction works for general $f$ after the replacement of  a good basis by a very good one (see also \cite{Saito-uniqueness}). We will focus on $f$ being weighted homogeneous since in such case it leads to a very effective computation algorithm in practice. In the following discussion we will then assume  $\{\phi_\alpha\}_{\alpha=1}^\mu$ to be   weighted homogeneous polynomials in $\C[\mathbf{x}]$ that  represent  a basis of the Jacobi algebra $\Jac(f)$ and $\phi_1=1$.
\subsubsection{The exponential map}
Let $F$ be a local universal unfolding of $f(\mathbf{x})$ around $\mbf{0}\in \C^\mu$:
$$
   F: \C^n\times \C^\mu \to \C, \quad F(\mathbf{x},\mathbf{s}):=f(\mathbf{x})+\sum_{\alpha=1}^\mu s_\alpha \phi_\alpha(\mathbf{x}), \quad \mathbf{s}=(s_1,\cdots, s_\mu).
$$
  The polynomial  $F$ becomes weighted homogeneous of total degree $1$  after   the assignment
$$
   \deg(s_\alpha):=1-\deg(\phi_\alpha).
$$
The higher residue pairing is also defined for $F$ as the family version, but we will not use it explicitly in our discussion (although implicitly used essentially).

Let $B:=\text{Span}_{\C}\{[\phi_\alpha d^n\mathbf{x}]\}\subset \mc H_f^{(0)}$ be spanned by the chosen good basis.  Then
$$
   \mc H_f^{(0)}=B[[z]], \quad \mc H_f=B((z)).
$$
Let $B_F:=\text{Span}_{\C}\{\phi_\alpha d^n\mathbf{x}\}$ be another copy of the vector space spanned by the forms $\phi_\alpha d^n\mathbf{x}$. We use a different notation to distinguish it with $B$, since $B_F$ should be viewed as a subspace of the Brieskorn lattice for the unfolding $F$. See \cite{LLSaito} for more details.

 Consider the following exponential operator \cite{LLSaito}
$$
   e^{(F-f)/z}: B_F\to B((z))[[\mathbf{s}]]
$$
defined as a $\C$-linear map on the basis of $B_F$ as follows.  Let $\C[\mathbf{s}]_k:=\Sym^k(\text{Span}_\C\{s_1,\cdots, s_\mu\})$ denote the space of $k$-homogeneous polynomial in $\mathbf{s}$ (not to be confused with the weighted homogeneous polynomials).
 As elements in $\mc H_f\otimes\C[\mathbf{s}]_k$, we can decompose
$$
  [z^{-k}(F-f)^k \phi_\alpha d^n\mathbf{x}]= \sum_{m\geq -k}\sum_{\beta} h^{(k)}_{\alpha \beta,m}z^m [\phi_\beta d^n\mathbf{x}],
$$
where $h^{(k)}_{\alpha \beta,m}\in \C[\mathbf{s}]_k$. Then we define
$$
  e^{(F-f)/z}(\phi_\alpha d^n\mathbf{x}):=\sum_{k=0}^\infty  \sum_{\beta} \sum_{m\geq -k} h^{(k)}_{\alpha \beta,m}  {z^m \over k! } [\phi_\beta d^n\mathbf{x}] \in B((z))[[\mathbf{s}]]
$$
\begin{prop}
The exponential map extends to a $\C((z))[[\mathbf{s}]]$-linear isomorphism
$$
e^{(F-f)/z}: B_F((z))[[\mathbf{s}]]\to B((z))[[\mathbf{s}]].
$$
\end{prop}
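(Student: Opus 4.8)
The plan is to represent $e^{(F-f)/z}$ by its matrix in the distinguished bases and to reduce the assertion to invertibility of that matrix over the coefficient ring $R:=\C((z))[[\mathbf{s}]]$. Both $B_F((z))[[\mathbf{s}]]$ and $B((z))[[\mathbf{s}]]$ are free $R$-modules of rank $\mu$, with $R$-bases $\{\phi_\alpha d^n\mathbf{x}\}$ and $\{[\phi_\beta d^n\mathbf{x}]\}$, and the extension in question is the unique $R$-linear map sending $\phi_\alpha d^n\mathbf{x}$ to $e^{(F-f)/z}(\phi_\alpha d^n\mathbf{x})$. Writing $e^{(F-f)/z}(\phi_\alpha d^n\mathbf{x})=\sum_\beta M_{\beta\alpha}(z,\mathbf{s})\,[\phi_\beta d^n\mathbf{x}]$ defines $M=(M_{\beta\alpha})\in\Mat_\mu(R)$, and the proposition becomes the statement $M\in\mathrm{GL}_\mu(R)$.

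First I would record the $z$-order bound that makes the whole construction well posed. Since $(F-f)^k=\big(\sum_\alpha s_\alpha\phi_\alpha\big)^k$ is homogeneous of $\mathbf{s}$-degree exactly $k$, only the $k$-th term of the exponential contributes to the $\mathbf{s}$-degree-$k$ part, so each $\mathbf{s}$-monomial coefficient is a \emph{finite} expression. The polynomial form $(F-f)^k\phi_\alpha d^n\mathbf{x}$ lies in $\Omega^n_{X,\mbf{0}}\otimes\C[\mathbf{s}]_k$, so its class in $\mc H_f^{(0)}=B[[z]]$ has coefficients in $\C[[z]]\otimes\C[\mathbf{s}]_k$, i.e.\ only \emph{non-negative} powers of $z$; the prefactor $z^{-k}$ then forces $h^{(k)}_{\alpha\beta,m}=0$ for $m<-k$. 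Hence the $\mathbf{s}$-degree-$k$ part of $M$ has entries in $z^{-k}\C[[z]]\otimes\C[\mathbf{s}]_k$. In particular every $\mathbf{s}$-monomial coefficient has finite principal part in $z$, which is exactly the assertion that the map takes values in $B((z))[[\mathbf{s}]]$.

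For invertibility I would exploit the $(\mathbf{s})$-adic structure. At $\mathbf{s}=0$ we have $F-f=0$, so only the $k=0$ term survives and $M|_{\mathbf{s}=0}=I_\mu$. Writing $M=I_\mu+N$ with $N$ having entries in the augmentation ideal $(s_1,\dots,s_\mu)$, I would define $M^{-1}:=\sum_{j\geq0}(-N)^j$. For a fixed $\mathbf{s}$-degree $d$, a nonzero contribution to $(-N)^j$ forces $j\leq d$ (each factor carries $\mathbf{s}$-degree $\geq1$), so the sum is finite in each $\mathbf{s}$-degree; moreover a product of factors of $\mathbf{s}$-degrees $k_1,\dots,k_j$ with $\sum_i k_i=d$ has $z$-order $\geq -\sum_i k_i=-d$ by the bound above. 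Thus the $\mathbf{s}$-degree-$d$ part of $M^{-1}$ again lies in $z^{-d}\C[[z]]\otimes\C[\mathbf{s}]_d$, so $M^{-1}\in\Mat_\mu(R)$ is a genuine two-sided inverse and $M\in\mathrm{GL}_\mu(R)$, proving the claim.

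The only real subtlety — the step I would treat most carefully — is the $z$-order control of the previous paragraph: it is what guarantees that the Neumann series for $M^{-1}$ converges \emph{inside} $B((z))[[\mathbf{s}]]$ rather than in some larger completion with unbounded principal parts in $z$. The identification $\mc H_f^{(0)}=B[[z]]$, which ensures that reduction of an honest polynomial $n$-form never introduces negative powers of $z$, is precisely the input that pins this down. Once it is in hand, the remaining steps are the formal bookkeeping of $\mathbf{s}$-degree and the standard invertibility of an identity-plus-nilpotent matrix over an $(\mathbf{s})$-adically complete ring; the heuristic inverse $e^{-(F-f)/z}$ is consistent with this, but the matrix argument avoids having to compare reductions in the two distinct Brieskorn lattices of $f$ and $F$.
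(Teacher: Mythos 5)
Your proof is correct and is essentially the paper's argument made explicit: the paper simply observes that $e^{(F-f)/z}\equiv 1 \bmod (\mathbf{s})$ under the identification of $B_F$ with $B$, and invertibility over the $(\mathbf{s})$-adically complete ring $\C((z))[[\mathbf{s}]]$ follows — your matrix $M=I+N$ and Neumann series $\sum_j(-N)^j$ are the standard unwinding of exactly that observation. (Your $z$-order bound is a nice check that $M$ has entries in $\C((z))[[\mathbf{s}]]$ at all, though once that is known the inverse lands there automatically, since each $\mathbf{s}$-degree of the Neumann series is a finite sum of products of Laurent series.)
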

\begin{proof} Clearly, $e^{(F-f)/z}$ extends to a $\C((z))[[\mathbf{s}]]$-linear map on $B_F((z))[[\mathbf{s}]]$. The statement follows by noticing
$
e^{(F-f)/z}\equiv 1 \mod (\mathbf{s})$ under the manifest identification between $B$ and $B_F$.
\end{proof}

We will use the same symbol
$$
  K_f: B((z))[[\mathbf{s}]]\times B((z))[[\mathbf{s}]]\to \C((z))[[\mathbf{s}]]
$$
to denote the $\C[[\mathbf{s}]]$-linear extension of the higher residue pairing to $\mc H_f[[\mathbf{s}]]=B((z))[[\mathbf{s}]]$.
\begin{lem}\label{exp-isotropy}
For any $\varphi_1, \varphi_2\in B_F$, we have
$$
 K_f(e^{(F-f)/z} \varphi_1, e^{(F-f)/z}\varphi_2)\in z^n \C[[z,\mathbf{s}]]
$$
In particular, $e^{(F-f)/z}$ maps $B_F[[z]]$ to an isotropic subspace of $\mc H_f[[\mathbf{s}]]$.
\end{lem}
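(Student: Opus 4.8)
The plan is to show that the pairing
$$G(\mathbf{s}) := K_f\big(e^{(F-f)/z}\varphi_1,\, e^{(F-f)/z}\varphi_2\big),$$
a priori only an element of $\C((z))[[\mathbf{s}]]$, is in fact independent of the deformation parameters $\mathbf{s}$, and then to evaluate it at $\mathbf{s}=\mathbf{0}$. First I would record that, at the level of representatives in $\Omega^n_{X,\mbf{0}}((z))[[\mathbf{s}]]$, the section $e^{(F-f)/z}\varphi$ is represented by the literal form $\sum_{k\geq 0}\frac{1}{k!}z^{-k}(F-f)^k\varphi$. Since the submodule $(df+zd)\Omega^{n-1}_{X,\mbf{0}}((z))$ cutting out $\mc{H}_f$ does not involve $\mathbf{s}$, differentiation in $s_\alpha$ commutes with passing to classes, giving the operator identity
$$\partial_{s_\alpha}\,e^{(F-f)/z}\varphi = \frac{\phi_\alpha}{z}\,e^{(F-f)/z}\varphi,$$
where $\frac{\phi_\alpha}{z}\cdot$ means ``multiply a representative by $\phi_\alpha$ and divide by $z$''. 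The point worth flagging is that, although multiplication by the function $\phi_\alpha$ is only well defined on $\mc{H}_f^{(0)}$ modulo $z\,\mc{H}_f^{(0)}$, the combination $z\,\partial_{s_\alpha}$ applied to these specific sections is unambiguous.

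Next I would differentiate $G$. Writing $\Psi_i=e^{(F-f)/z}\varphi_i$ and using the Leibniz rule together with the identity above,
$$\partial_{s_\alpha}G = K_f\!\left(\tfrac{\phi_\alpha}{z}\Psi_1,\,\Psi_2\right) + K_f\!\left(\Psi_1,\,\tfrac{\phi_\alpha}{z}\Psi_2\right).$$
Pulling out the scalar $1/z$ via property (3) of $K_f$ — which contributes a factor $1/z$ on the left slot and $-1/z$ on the right slot because of the $z\mapsto -z$ built into that property — turns the right-hand side into $\frac{1}{z}\big[K_f(\phi_\alpha\Psi_1,\Psi_2)-K_f(\Psi_1,\phi_\alpha\Psi_2)\big]$. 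Thus $\partial_{s_\alpha}G$ vanishes precisely when multiplication by the function $\phi_\alpha$ is self-adjoint for $K_f$, i.e. $K_f(\phi_\alpha\Psi_1,\Psi_2)=K_f(\Psi_1,\phi_\alpha\Psi_2)$. Granting this, $\partial_{s_\alpha}G=0$ for every $\alpha$, so $G$ is constant in $\mathbf{s}$.

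Finally, I would evaluate at $\mathbf{s}=\mathbf{0}$. Since $e^{(F-f)/z}\equiv 1 \bmod (\mathbf{s})$, we get $G|_{\mathbf{s}=\mathbf{0}}=K_f(\varphi_1,\varphi_2)$, which lies in $z^n\C[[z]]$ by construction of the higher residue pairing. Hence $G\in z^n\C[[z]]\subset z^n\C[[z,\mathbf{s}]]$, the desired assertion. The ``in particular'' clause then follows: for $\varphi_1,\varphi_2\in B_F[[z]]$ the same conclusion holds after extracting the integer powers of $z$ by property (3) (each such extraction only raises the $z$-order), so $z^{-n}K_f(\cdots)\in\C[[z,\mathbf{s}]]$ has vanishing residue and $\omega_f$ restricted to $\Im(e^{(F-f)/z})$ is zero.

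The main obstacle is the self-adjointness of multiplication by a function with respect to $K_f$. This is the lift to the \emph{higher} residue pairing of the Frobenius property $\eta_f(g\alpha,\beta)=\eta_f(\alpha,g\beta)$ of the ordinary residue pairing $\eta_f$, and it is delicate exactly because multiplication by $\phi_\alpha$ does not descend to $\mc{H}_f^{(0)}$ on the nose but only up to order $z$. I would establish it either by appealing to the representative-level construction of $K_f$ in \cite{Saito-residue}, or — equivalently — by recognizing the computation above as the flatness of the family higher residue pairing along the Gauss--Manin connection $\nabla^{GM}_{s_\alpha}=\partial_{s_\alpha}-\phi_\alpha/z$, for which $e^{(F-f)/z}\varphi$ is a flat section. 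This is the one step that genuinely requires input from Saito's theory beyond the formal properties (1)--(4).
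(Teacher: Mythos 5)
Your reduction hinges on the self-adjointness of multiplication by $\phi_\alpha$ with respect to $K_f$, and that property is false; with it falls your intermediate claim that $G(\mathbf{s})$ is constant in $\mathbf{s}$. Here is a counterexample already at $\mathbf{s}=\mathbf{0}$, in the exact setting of the lemma: take $n=1$, $f=x^4$, the good basis $\{[dx],[x\,dx],[x^2dx]\}$, and $\phi_\alpha=x^2$, $\varphi_1=\varphi_2=x^2dx$. From $(df+zd)(x)=(4x^4+z)\,dx$ one gets $[x^4dx]=-\tfrac{z}{4}[dx]$, while properties (1), (2), (4) of $K_f$ force $K_f([dx],[x^2dx])=K_f([x^2dx],[dx])=c\,z$ exactly, with $c=\eta_f(dx,x^2dx)\neq 0$. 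Hence
\[
K_f\big([x^2\cdot x^2dx],[x^2dx]\big)=-\tfrac{z}{4}\,K_f([dx],[x^2dx])=-\tfrac{c\,z^2}{4},
\qquad
K_f\big([x^2dx],[x^2\cdot x^2dx]\big)=+\tfrac{z}{4}\,K_f([x^2dx],[dx])=+\tfrac{c\,z^2}{4},
\]
the sign flip coming precisely from property (3): reducing $\phi_\alpha\omega$ modulo $(df+zd)\Omega^{n-1}_{X,\mbf{0}}$ produces explicit powers of $z$ (this is exactly the failure of multiplication to descend to classes, which you flagged), and $z$ pulls out of the two slots of $K_f$ with opposite signs. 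Correspondingly $G$ is genuinely $\mathbf{s}$-dependent: for $F=x^4+s_1+s_2x+s_3x^2$, expanding $e^{s_3x^2/z}x^2dx$ in the basis and pairing gives $G|_{s_1=s_2=0}=-\tfrac{c}{2}\,z\,s_3$. Note that your conclusion $G\in z^n\C[[z]]$ is strictly stronger than the lemma, whose conclusion $z^n\C[[z,\mathbf{s}]]$ deliberately permits $\mathbf{s}$-dependence; indeed the $\mathbf{s}$-variation of $G$ (its leading coefficient is the family residue pairing $\eta_F(\varphi_1,\varphi_2)$) is what eventually produces the Frobenius structure, so constancy cannot hold.

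Your proposed repair via ``flatness of the family higher residue pairing along the Gauss--Manin connection'' conflates two different pairings. The family pairing $K_F$ is indeed $\nabla^{GM}$-flat, but under the trivialization $e^{(F-f)/z}$ the Gauss--Manin connection on the Brieskorn lattice of $F$ corresponds to the \emph{trivial} connection $\partial_{s_\alpha}$ on $B((z))[[\mathbf{s}]]$, and $K_F$ corresponds to the constant $\C[[\mathbf{s}]]$-linear extension of $K_f$ --- that correspondence \emph{is} the isometry statement --- so GM-flatness translates into the Leibniz rule you already used and yields nothing further. The connection $\partial_{s_\alpha}-\phi_\alpha/z$, for which your sections $\Psi_i$ are flat, is a different connection, and the constant extension of $K_f$ is not flat for it; what your computation actually establishes is the correct but circular identity $z\,\partial_{s_\alpha}G=K_f(\phi_\alpha\Psi_1,\Psi_2)-K_f(\Psi_1,\phi_\alpha\Psi_2)$, whose right-hand side is generically nonzero (in the example above it equals $-\tfrac{c}{2}z^2$ at $\mathbf{s}=\mathbf{0}$). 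The paper's proof of Lemma \ref{exp-isotropy} therefore goes the other way and this is essentially unavoidable: it quotes from \cite{LLSaito} and \cite{Saito-uniqueness} that $e^{(F-f)/z}$ is an isometry, $K_f(e^{(F-f)/z}\varphi_1,e^{(F-f)/z}\varphi_2)=K_F(\varphi_1,\varphi_2)$, and then uses that the family higher residue pairing $K_F$ takes values in $z^n\C[[z,\mathbf{s}]]$ on the Brieskorn lattice of the unfolding $F$ by Saito's construction (the remark following the lemma gives an alternative argument via an explicit operator formula for $K_f$). The content of the lemma is only the absence of powers of $z$ below $z^n$; independence of $\mathbf{s}$ is too much to ask.
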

\begin{proof} Let $K_F$ denote  the higher residue pairing for the unfolding $F$ \cite{Saito-residue}. The exponential operator $e^{(F-f)/z}$ gives an isometry (with respect to the higher residue pairing) between the Brieskorn lattice for the unfolding $F$ and the trivial unfolding $f$ \cite{LLSaito, Saito-uniqueness}. That is,  $K_f(e^{(F-f)/z} \varphi_1, e^{(F-f)/z}\varphi_2)=K_F(\varphi_1, \varphi_2)\in z^n \C[[z,\mathbf{s}]]$, where $\varphi_1$, $\varphi_2$ are treated as elements of  Brieskorn lattice for the unfolding $F$.
\end{proof}

\begin{rmk}
The above lemma can also be proved directly via an explicit formula of $K_f$ described in \cite{LLSaito}. By such a formula,  there exists a compactly supported differential operator $P({\pa\over \pa \bar{x_i}}, z{\pa\over \pa {x_i}}, \lrcorner \pa_{x_i}, \wedge d\bar x_i)$  on smooth differential forms  composed of ${\pa\over \pa \bar{x_i}}, z{\pa\over \pa {x_i}}, \lrcorner \pa_{x_i}, \wedge d\bar x_i$ and some cut-off function such that
$$
   K_f(e^{(F-f)/z} \varphi_1, e^{(F-f)/z}\varphi_2)=z^n \int_X e^{(F-f)/z}\varphi_1\wedge P({\pa\over \pa \bar{x_i}}, z{\pa\over \pa {x_i}}, \lrcorner \pa_{x_i}, \wedge d\bar x_i) (e^{-(F-f)/z}\varphi_2).
$$
Since $P$ will not introduce negative powers of $z$ when passing through $e^{(f-F)/z}$, the lemma follows.
\end{rmk}

\begin{thm}\label{thm-decomp}
 Given a good basis  $\{[\phi_\alpha d^n \mathbf{x}]\}_{\alpha=1}^\mu\subset \mc H_f^{(0)}$,  there exists a unique  pair $(\zeta, \mc J)$ satisfying the following:
  $(1)\, \zeta \in B_F[[z]][[\mathbf{s}]], \quad (2)\, \mc J \in [d^n\mathbf{x}]+z^{-1}B[z^{-1}] [[\mathbf{s}]]\subset \mc H_f[[s]],$ and
\begin{equation}\label{pert-eqn}
   e^{(F-f)/z} \zeta= \mc J. \tag{$\star$}
\end{equation}
 Moreover, both $\zeta$ and $\mc J$ are weighted homogeneous.
\end{thm}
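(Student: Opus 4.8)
The plan is to solve the perturbative equation \eqref{pert-eqn} recursively, order by order in the $\mathbf{s}$-adic filtration, using the fundamental $\C[[\mathbf{s}]]$-module decomposition
$$
\mc{H}_f[[\mathbf{s}]] = B[[z]][[\mathbf{s}]] \oplus z^{-1}B[z^{-1}][[\mathbf{s}]].
$$
This is a Birkhoff-type factorization: condition (1) asks $\zeta$ to lie in the ``nonnegative'' summand $B_F[[z]][[\mathbf{s}]]$ (identified with $B[[z]][[\mathbf{s}]]$ because $e^{(F-f)/z}\equiv 1 \bmod(\mathbf{s})$), while condition (2) asks $\mc J = e^{(F-f)/z}\zeta$ to differ from $[d^n\mathbf{x}]$ only by a strictly negative $z$-tail in $z^{-1}B[z^{-1}][[\mathbf{s}]]$.

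To carry this out I would write $\zeta = \sum_{k\geq 0}\zeta_k$, $\mc J=\sum_{k\geq 0}\mc J_k$ with $\zeta_k,\mc J_k$ the components in $\C[\mathbf{s}]_k$, and let $E_j$ denote the $\mathbf{s}$-degree-$j$ part of $e^{(F-f)/z}$, so $E_0=\mathrm{id}$. The defining expansion $[z^{-k}(F-f)^k\phi_\alpha d^n\mathbf{x}]=\sum_{m\geq -k}\sum_\beta h^{(k)}_{\alpha\beta,m}z^m[\phi_\beta d^n\mathbf{x}]$ shows each $E_j$ lowers the $z$-order by at most $j$, so every expression below has a Laurent $z$-expansion bounded below and the splitting into nonnegative/strictly-negative parts is well defined at each order. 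Collecting \eqref{pert-eqn} in $\mathbf{s}$-degree gives $\zeta_k + \sum_{j=1}^k E_j\zeta_{k-j} = \mc J_k$. At $k=0$ this reads $\zeta_0=\mc J_0$; since $\zeta_0\in B[[z]]$ and $\mc J_0\in [d^n\mathbf{x}]+z^{-1}B[z^{-1}]$, intersecting the two summands forces $\zeta_0=\mc J_0=[d^n\mathbf{x}]$ (using $\phi_1=1$). For $k\geq 1$ the term $R_k := -\sum_{j=1}^k E_j\zeta_{k-j}$ depends only on $\zeta_0,\dots,\zeta_{k-1}$, and the equation becomes $\zeta_k-\mc J_k=R_k$; decomposing $R_k=R_k^++R_k^-$ along the splitting, the constraints $\zeta_k\in B[[z]]$, $\mc J_k\in z^{-1}B[z^{-1}]$ force the unique choice $\zeta_k=R_k^+$, $\mc J_k=-R_k^-$. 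This simultaneously yields existence and uniqueness as formal power series in $\mathbf{s}$.

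For the homogeneity, the key observation is that $(F-f)/z=\sum_\alpha s_\alpha\phi_\alpha/z$ has total degree $0$, since $\deg(s_\alpha)+\deg(\phi_\alpha)=1=\deg(z)$; hence $e^{(F-f)/z}$ preserves the $\Q$-grading, and both summands of the splitting are graded subspaces. Decomposing the solution into total-degree-homogeneous pieces $\zeta=\sum_d\zeta^{(d)}$, $\mc J=\sum_d\mc J^{(d)}$, each pair $(\zeta^{(d)},\mc J^{(d)})$ again solves \eqref{pert-eqn}. The normalization $[d^n\mathbf{x}]$ lives in degree $D:=\sum_i q_i$, while for $d\neq D$ the pair solves the same recursion with vanishing $\mathbf{s}$-degree-$0$ term and therefore vanishes by the uniqueness just established. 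Thus $\zeta=\zeta^{(D)}$ and $\mc J=\mc J^{(D)}$ are weighted homogeneous of degree $D$.

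The main obstacle is the well-posedness of the Birkhoff splitting at each order: one must confirm from the structure of the exponential operator that the negative $z$-tail $R_k^-$ is a genuine Laurent polynomial, so that $\mc J_k$ indeed lands in $z^{-1}B[z^{-1}]$ rather than an infinite negative series; this is exactly what the lower bound $m\geq -k$ in the expansion of $[z^{-k}(F-f)^k\phi_\alpha d^n\mathbf{x}]$ provides. The isotropy statement of Lemma \ref{exp-isotropy} is what makes this splitting compatible with the higher residue pairing (and hence identifies $\zeta$ with a primitive form), although the bare existence and uniqueness asserted here rely only on the module decomposition. Once the recursion is in place, the homogeneity is a purely formal consequence of degree-preservation together with uniqueness.
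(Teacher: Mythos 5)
Your proposal is correct and takes essentially the same approach as the paper's proof: an order-by-order recursion in $\mathbf{s}$ using the splitting of $\mc H_f[[\mathbf{s}]]$ into $B[[z]][[\mathbf{s}]]$ and $z^{-1}B[z^{-1}][[\mathbf{s}]]$, where at each order the positive/negative $z$-projection of the data from lower orders uniquely determines $\zeta_k$ and $\mc J_k$ (the paper phrases this as subtracting $\tilde R_{N+1}^+$ from $\zeta_{(\leq N)}$, which is the same step). Your uniqueness-based argument for weighted homogeneity is a correct fleshing-out of the paper's one-line remark that \eqref{pert-eqn} respects the weighted degree, not a genuinely different route.
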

\begin{proof}
We will solve $ \zeta(\mathbf{s})$ recursively with respect to the order in $\mathbf{s}$. Let
$$
   \zeta=\sum_{k=0}^\infty \zeta_{(k)}= \sum_{k=0}^\infty \sum_\alpha \zeta_{(k)}^\alpha \phi_\alpha d^n\mathbf{x}, \quad \zeta^\alpha_{(k)}\in \C[[z]]\otimes_{\C}\C[\mathbf{s}]_k.
$$
Since $e^{(F-f)/z}\equiv 1\mod (\mathbf{s})$, the leading order of \eqref{pert-eqn} is
$$
   \zeta_{(0)}\in [d^n\mathbf{x}]+z^{-1}B[z^{-1}]
$$
which is uniquely solved by  $
   \zeta_{(0)}=\phi_1 d^n\mathbf{x}
$.
Suppose we have solved \eqref{pert-eqn} up to order $N$, i.,e,  $\zeta_{(\leq N)}:=\sum_{k=0}^N \zeta_{(k)}$ such that
$$
    e^{(F-f)/z} \zeta_{(\leq N)}\in [d^n\mathbf{x}]+ z^{-1}B[z^{-1}] [[\mathbf{s}]] \mod (\mathbf{s}^{N+1}).
$$
Let $R_{N+1}\in B((z))\otimes_{\C}\C[\mathbf{s}]_{(N+1)}$ be the $(N+1)$-th order component of $e^{(F-f)/z} \zeta_{(\leq N)}$. Let
$$
  R_{N+1}= R_{N+1}^+ + R_{N+1}^-
$$
where $R_{N+1}^+\in B[[z]]\otimes_{\C}\C[\mathbf{s}]_{(N+1)}, R_{N+1}^-\in z^{-1}B[z^{-1}]\otimes_{\C}\C[\mathbf{s}]_{(N+1)}$. Let $\tilde R_{N+1}^+\in B_F[[z]]\otimes_{\C}\C[\mathbf{s}]_{(N+1)}$ correspond to $R_{N+1}^+$ under the manifest identification between $B$ and $B_F$. Then
$$
  \zeta_{(\leq N+1)}:=\zeta_{(\leq N)}-\tilde R_{N+1}^+
$$
gives the unique solution of \eqref{pert-eqn} up to order $N+1$. This algorithm allows us to solve $\zeta, \mc J$ perturbatively to arbitrary order. The weighted homogeneity follows from the fact that \eqref{pert-eqn} respects the weighted degree.
\end{proof}

\begin{rmk}\label{rmk-primitive}
In  \cite{LLSaito},  it is shown that the volume form $$
  \sum_{k=0}^\infty \sum_\alpha \zeta_{(k)}^\alpha \phi_\alpha d^n\mathbf{x}
$$
gives the power series expansion of a representative of  the   primitive form associated to the good basis  $\{[\phi_\alpha d^n \mathbf{x}]\}_{\alpha=1}^\mu$. In particular,  this is a perturbative way   to compute the primitive form  via a formal solution of the Riemann-Hilbert-Birkhoff problem.

\end{rmk}

\subsubsection{Flat coordinates and potential function} Let $(\zeta, \mc J)$ be the unique solution of \eqref{pert-eqn}. As shown in \cite{LLSaito}, $\zeta$ represents the power series expansion of a primitive form. However for the purpose of mirror symmetry, it is more convenient to work with $\mc J$, which plays the role of Givental's J-function (see \cite{G-tutorial} for an introduction). This allows us to read off the flat coordinates and the potential function of the associated Frobenius manifold structure.

With the natural embedding $z^{-1}\C[z^{-1}][[\mathbf{s}]]\into z^{-1}\C[[z^{-1}]][[\mathbf{s}]]$,  we decompose
$$
    \mc J=[d^nx]+\sum_{m=-1}^{-\infty} z^{m} \mc J_{m}, \quad \text{where}\ \mc J_m=\sum_\alpha \mc J_{m}^\alpha [\phi_\alpha d^n\mathbf{x}], \mc J_{m}^\alpha\in \C[[\mathbf{s}]].
$$
We denote the $z^{-1}$-term by
$$
   t_\alpha(\mathbf{s}):=\mc J_{-1}^\alpha(\mathbf{s}).
$$
It is easy to see that $t_\alpha$ is weighted homogeneous of the same degree as $s_\alpha$ such that $t_\alpha=s_\alpha+O(\mathbf{s}^2)$. Therefore $t_\alpha$ defines a set of new homogeneous local coordinates on the (formal) deformation space of $f$.

\begin{prop}\label{quantum-diff} The function  $\mc J=\mc J(\mathbf{s}(\mathbf{t}))$ in coordinates $t_\alpha$ satisfies
$$
  \pa_{t_\alpha}\pa_{t_\beta} \mc J=z^{-1}\sum_\gamma A_{\alpha\beta}^\gamma(\mathbf{t}) \pa_{t_\gamma}\mc J
$$
for some homogeneous $A_{\alpha\beta}^\gamma(\mathbf{t})\in \C[[\mathbf{t}]]$ of weighted degree $\deg \phi_\alpha+\deg \phi_\beta-\deg \phi_\gamma$. Moreover, for any $\alpha, \beta, \gamma, \delta$,
$$
   \pa_{t_\alpha}A_{\beta\gamma}^\delta= \pa_{t_\beta}A_{\alpha\gamma}^\delta, \quad \sum_{\sigma} A_{\alpha\sigma}^\delta A_{\beta\gamma}^\sigma= \sum_{\sigma} A_{\beta\sigma}^\delta A_{\alpha\gamma}^\sigma
$$
\end{prop}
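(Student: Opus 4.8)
The plan is to differentiate the defining identity $\mc J = e^{(F-f)/z}\zeta$ and to play two facts against each other: differentiation in the deformation directions keeps us inside the image $e^{(F-f)/z}\bigl(B_F[[z]][[\mathbf{s}]]\bigr)$, while the normalization $\mc J\in[d^n\mathbf{x}]+z^{-1}B[z^{-1}][[\mathbf{s}]]$ controls the $z$-expansion. Since $\pa_{s_\alpha}(F-f)=\phi_\alpha$, we have $z\pa_{s_\alpha}e^{(F-f)/z}=\phi_\alpha\,e^{(F-f)/z}$ as operators on forms, so (using that $e^{(F-f)/z}$ intertwines the Brieskorn reductions of $F$ and of $f$, as in the proof of Lemma \ref{exp-isotropy} and \cite{LLSaito}) one obtains
\[
  z\pa_{s_\alpha}\mc J = e^{(F-f)/z}\bigl(\phi_\alpha\zeta+z\pa_{s_\alpha}\zeta\bigr),\qquad \phi_\alpha\zeta+z\pa_{s_\alpha}\zeta\in B_F[[z]][[\mathbf{s}]].
\]
Iterating, $z^2\pa_{s_\alpha}\pa_{s_\beta}\mc J$ and $z\,\pa_{s_\gamma}\mc J$ all lie in $e^{(F-f)/z}\bigl(B_F[[z]][[\mathbf{s}]]\bigr)$; since $t_\alpha=s_\alpha+O(\mathbf{s}^2)$ gives $\C[[\mathbf{t}]]=\C[[\mathbf{s}]]$, the chain rule shows the same membership after passing to the flat coordinates $t_\alpha$. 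On the other hand, by definition $\mc J_{-1}=\sum_\gamma t_\gamma\,\phi_\gamma d^n\mathbf{x}$, so $z\pa_{t_\gamma}\mc J=\phi_\gamma d^n\mathbf{x}+O(z^{-1})$ has $z^0$-coefficient the fixed basis vector $\phi_\gamma d^n\mathbf{x}$, and since the $z^0$- and $z^{-1}$-parts of $\mc J$ are annihilated by two $t$-derivatives, $z^2\pa_{t_\alpha}\pa_{t_\beta}\mc J\in B[[z^{-1}]][[\mathbf{t}]]$. Weighted homogeneity of $\mc J$ forces only finitely many powers of $z$ at each order in $\mathbf{t}$, so in fact all these quantities lie in $B[z^{-1}][[\mathbf{t}]]$.

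Next I would define $A_{\alpha\beta}^\gamma(\mathbf{t})\in\C[[\mathbf{t}]]$ by expanding the $z^0$-coefficient of $z^2\pa_{t_\alpha}\pa_{t_\beta}\mc J$ in the basis $\{\phi_\gamma d^n\mathbf{x}\}$ of $B$, and set
\[
  D:=z^2\pa_{t_\alpha}\pa_{t_\beta}\mc J-\sum_\gamma A_{\alpha\beta}^\gamma\,z\pa_{t_\gamma}\mc J.
\]
By construction the $z^0$-coefficients cancel, so $D\in z^{-1}B[z^{-1}][[\mathbf{t}]]$, while the first paragraph gives $D\in e^{(F-f)/z}\bigl(B_F[[z]][[\mathbf{t}]]\bigr)$. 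The uniqueness half of Theorem \ref{thm-decomp} is exactly the statement that $\mc H_f[[\mathbf{s}]]=e^{(F-f)/z}\bigl(B_F[[z]][[\mathbf{s}]]\bigr)\oplus z^{-1}B[z^{-1}][[\mathbf{s}]]$ is an internal direct sum: if a nonzero $v=e^{(F-f)/z}w$ lay in both summands, then $(\zeta+w,\mc J+v)$ would be a second solution of \eqref{pert-eqn} obeying the normalization of Theorem \ref{thm-decomp}, forcing $w=0$. Hence $D=0$, which is precisely $\pa_{t_\alpha}\pa_{t_\beta}\mc J=z^{-1}\sum_\gamma A_{\alpha\beta}^\gamma\pa_{t_\gamma}\mc J$. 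The homogeneity is then bookkeeping: $\mc J$ is homogeneous of degree $\sum_i q_i$, each $\pa_{t_\alpha}$ lowers degree by $\deg s_\alpha=1-\deg\phi_\alpha$, and $\deg z=1$, so matching degrees in $D=0$ yields $\deg A_{\alpha\beta}^\gamma=\deg\phi_\alpha+\deg\phi_\beta-\deg\phi_\gamma$.

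Finally, both integrability relations follow from the symmetry of $\pa_{t_\alpha}\pa_{t_\beta}\pa_{t_\gamma}\mc J$ in $\alpha,\beta,\gamma$. Differentiating the equation once more and substituting it back into itself gives
\[
  z^2\pa_{t_\alpha}\pa_{t_\beta}\pa_{t_\gamma}\mc J=\sum_\delta(\pa_{t_\alpha}A_{\beta\gamma}^\delta)\,z\pa_{t_\delta}\mc J+z^{-1}\sum_\epsilon\Bigl(\sum_\delta A_{\beta\gamma}^\delta A_{\alpha\delta}^\epsilon\Bigr)z\pa_{t_\epsilon}\mc J.
\]
Reading off the $z^0$-coefficient (where the second sum contributes nothing, since $z\pa_{t_\epsilon}\mc J$ has no positive powers of $z$, and $z\pa_{t_\delta}\mc J$ contributes $\phi_\delta d^n\mathbf{x}$), the left side is symmetric under $\alpha\leftrightarrow\beta$, so linear independence of $\{\phi_\delta d^n\mathbf{x}\}$ yields $\pa_{t_\alpha}A_{\beta\gamma}^\delta=\pa_{t_\beta}A_{\alpha\gamma}^\delta$. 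Reading off instead the $z^{-1}$-coefficient and using this symmetry to cancel the terms with $\pa_{t}A$, linear independence gives $\sum_\delta A_{\beta\gamma}^\delta A_{\alpha\delta}^\epsilon=\sum_\delta A_{\alpha\gamma}^\delta A_{\beta\delta}^\epsilon$, which after relabeling is the stated associativity $\sum_\sigma A_{\alpha\sigma}^\delta A_{\beta\gamma}^\sigma=\sum_\sigma A_{\beta\sigma}^\delta A_{\alpha\gamma}^\sigma$. The main obstacle is not any single computation but the structural input underlying the second paragraph: one must be confident that $e^{(F-f)/z}$ genuinely identifies the Brieskorn lattice of $F$ with that of $f$ (so differentiation stays inside $e^{(F-f)/z}(B_F[[z]])$) and that Theorem \ref{thm-decomp} provides a \emph{direct-sum} decomposition; once these are granted, everything reduces to extracting coefficients of $z$ against the fixed basis $\{\phi_\gamma d^n\mathbf{x}\}$.
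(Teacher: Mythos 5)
Your proposal is correct and follows essentially the same route as the paper: you work with the identical splitting $\mc H_f[[\mathbf{s}]]=\mc H_+\oplus \mc H_-$ (whose trivial intersection you rightly extract from the uniqueness half of Theorem \ref{thm-decomp}), and your step of defining $A_{\alpha\beta}^\gamma$ from the $z^0$-coefficient of $z^2\pa_{t_\alpha}\pa_{t_\beta}\mc J$ and killing the remainder $D\in \mc H_+\cap\mc H_-$ is exactly the paper's assertion that $\{z\pa_{t_\alpha}\mc J\}$ forms a $\C[[\mathbf{s}]]$-basis of $\mathfrak{B}_F=\mc H_+\cap z\mc H_-$, which contains $z^2\pa_{t_\alpha}\pa_{t_\beta}\mc J$. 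For the integrability identities, the paper's vanishing of the commutator $\big[\pa_{t_\alpha}-z^{-1}\mc A_\alpha,\,\pa_{t_\beta}-z^{-1}\mc A_\beta\big]$ on $\mathfrak{B}_F$ is the same computation as your matching of the $z^0$- and $z^{-1}$-coefficients of the symmetric third derivatives, so the two arguments coincide in substance.
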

\begin{proof} Consider the splitting
$$
   \mc H_f[[\mathbf{s}]]=B((z))[[\mathbf{s}]]=\mc H_+\oplus \mc H_-,
   $$
where
$$
 \mc H_+:=e^{(F-f)/z}(B_F[[z]][[\mathbf{s}]])\subset B((z))[[\mathbf{s}]], \quad \mc H_-:=z^{-1}B[z^{-1}][[\mathbf{s}]].
$$
 Let $
\mathfrak{B}_F:= \mc H_+\cap z \mc H_-
$. Equation \eqref{pert-eqn} implies that $
  z\pa_{t_\alpha}\mc J\in \mathfrak{B}_F
$, with $z$-leading term of constant coefficient
$$
z\pa_{t_\alpha} \mc J \in [\phi_\alpha d^n\mathbf{x}]+\mc H_-.
$$
In particular, $\{z\pa_{t_\alpha}\mc J\}$ form  a $\C[[\mathbf{s}]]$-basis of $\mathfrak{B}_F$.

Similarly, $z^2\pa_{t_\alpha}\pa_{t_\beta}\mc J=z^2\pa_{t_\alpha}\pa_{t_\beta}(e^{(F-f)/z}\zeta)\in \mc H_+$, and $z^2\pa_{t_\alpha}\pa_{t_\beta}\mc J \in z\mc H_-$ by the above property of leading constant coefficient. Therefore $z^2\pa_{t_\alpha}\pa_{t_\beta}\mc J \in \mathfrak{B}_F$. This implies the existence of functions $A_{\alpha\beta}^\gamma=A_{\alpha\beta}^\gamma(\mathbf{s}(\mathbf{t}))$ such that
$$
z^2\pa_{t_\alpha}\pa_{t_\beta}\mc J=\sum_{\gamma}zA_{\alpha\beta}^\gamma(\mathbf{t}) \pa_{t_\gamma}\mc J
$$
The homogeneous degree follows from the fact that $\mc J$ is weighted homogeneous.

Let $\mc A_\alpha$ denote the linear transformation on $\mathfrak{B}_F$ by
$$
 \mc A_\alpha: z\pa_{\beta}\mc J \to \sum_{\gamma}A_{\alpha\beta}^\gamma z\pa_{t_\gamma}\mc J.
$$
We can rewrite the above equation as $
   (\pa_{t_\alpha}-{z^{-1}}\mc A_\alpha)\pa_{t_\beta}\mc J=0.
$ We notice that
$$
  \big[\pa_{t_\alpha}-{z^{-1}}\mc A_\alpha, \pa_{t_\beta}-{z^{-1}}\mc A_\beta  \big]=0\ \text{on}\ \mathfrak{B}_F \quad, \forall \alpha, \beta.
$$
Therefore the last equations in the proposition hold.
\end{proof}

\begin{lem} In terms of the coordinates $t_\alpha$, we have
$$
  K_f(z\pa_{t_\alpha}\mc J, z\pa_{t_\beta}\mc J)=z^n g_{\alpha\beta}.
$$
Here $g_{\alpha\beta}$ is the constant equal to the residue pairing  $\eta_f(\phi_\alpha d^n\mathbf{x},  \phi_\beta d^n\mathbf{x})$.
\end{lem}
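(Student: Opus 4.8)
The plan is to play two complementary descriptions of the section $z\pa_{t_\alpha}\mc J$ off against each other, one giving a lower bound on the $z$-order of the pairing and one giving an upper bound. On one hand, Proposition \ref{quantum-diff} shows $z\pa_{t_\alpha}\mc J\in\mathfrak{B}_F=\mc H_+\cap z\mc H_-$; in particular it lies in $\mc H_+=e^{(F-f)/z}(B_F[[z]][[\mbf{s}]])$, so writing $z\pa_{t_\alpha}\mc J=e^{(F-f)/z}\psi_\alpha$ with $\psi_\alpha\in B_F[[z]][[\mbf{s}]]$ and invoking Lemma \ref{exp-isotropy} (extended $\C((z))[[\mbf{s}]]$-sesquilinearly) gives
$$
K_f(z\pa_{t_\alpha}\mc J,z\pa_{t_\beta}\mc J)=K_f(e^{(F-f)/z}\psi_\alpha,e^{(F-f)/z}\psi_\beta)\in z^n\C[[z,\mbf{s}]].
$$
On the other hand, $z\pa_{t_\alpha}\mc J\in z\mc H_-=B[z^{-1}][[\mbf{s}]]$ involves only nonpositive powers of $z$, and by the ``leading constant coefficient'' statement in Proposition \ref{quantum-diff} its $z^0$-part is exactly $[\phi_\alpha d^n\mbf{x}]$. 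So I would expand
$$
z\pa_{t_\alpha}\mc J=\sum_{j\ge0}z^{-j}a_\alpha^{(j)},\qquad a_\alpha^{(0)}=[\phi_\alpha d^n\mbf{x}],\quad a_\alpha^{(j)}=\sum_\gamma a^{(j)}_{\alpha\gamma}[\phi_\gamma d^n\mbf{x}]\in B[[\mbf{s}]].
$$

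The second step is to evaluate the pairing termwise using the good-basis hypothesis. Since $\{[\phi_\gamma d^n\mbf{x}]\}$ is a good basis, condition (2) of Definition \ref{defn-good-section} together with property (4) of $K_f$ gives the sharp identity $K_f([\phi_\gamma d^n\mbf{x}],[\phi_\delta d^n\mbf{x}])=z^n g_{\gamma\delta}$ with $g_{\gamma\delta}=\eta_f(\phi_\gamma d^n\mbf{x},\phi_\delta d^n\mbf{x})$ a constant and, crucially, \emph{no} higher-order terms in $z$. Using $\C[[\mbf{s}]]$-bilinearity to pull out the scalar coefficients $a^{(j)}_{\alpha\gamma},a^{(k)}_{\beta\delta}$, and property (3) to extract the powers of $z$ (which produces $K_f(z^{-j}u,z^{-k}v)=(-1)^k z^{-j-k}K_f(u,v)$ for $u,v\in B[[\mbf{s}]]$), I obtain
$$
K_f(z\pa_{t_\alpha}\mc J,z\pa_{t_\beta}\mc J)=z^n\sum_{j,k\ge0}(-1)^k z^{-j-k}\,\tilde g^{(jk)}(\mbf{s}),\qquad \tilde g^{(jk)}=\sum_{\gamma,\delta}a^{(j)}_{\alpha\gamma}a^{(k)}_{\beta\delta}g_{\gamma\delta}.
$$
The right-hand side is $z^n$ times a series in \emph{nonpositive} powers of $z$.

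Finally I would combine the two estimates. The last display carries only powers $z^{n-j-k}$ with $j+k\ge0$, i.e.\ powers $\le z^n$, while the first step forces the whole pairing into $z^n\C[[z,\mbf{s}]]$, i.e.\ only powers $\ge z^n$. Hence every term with $j+k>0$ vanishes and only the $j=k=0$ contribution survives; since $a_\alpha^{(0)}=[\phi_\alpha d^n\mbf{x}]$ has coefficient $a^{(0)}_{\alpha\gamma}=\delta_{\alpha\gamma}$ (constant in $\mbf{s}$), this gives $\tilde g^{(00)}=g_{\alpha\beta}$ and therefore $K_f(z\pa_{t_\alpha}\mc J,z\pa_{t_\beta}\mc J)=z^n g_{\alpha\beta}$, as claimed.

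The main obstacle --- and the place where the good-basis hypothesis is indispensable --- is the sharp evaluation $K_f([\phi_\gamma d^n\mbf{x}],[\phi_\delta d^n\mbf{x}])\in z^n\C$ with no higher $z$-order corrections. Property (4) alone only identifies the leading $z^n$ coefficient with $\eta_f$; without condition (2) of Definition \ref{defn-good-section} the pairing of basis elements could carry terms $z^{n+1},z^{n+2},\dots$, and the clean collapse to a single monomial in the second step would fail. One would then have to fall back on the flatness relation $\pa_{t_\gamma}G_{\alpha\beta}=z^{-1}\bigl(\sum_\sigma A_{\gamma\alpha}^\sigma G_{\sigma\beta}-\sum_\sigma A_{\gamma\beta}^\sigma G_{\alpha\sigma}\bigr)$, where $G_{\alpha\beta}:=z^{-n}K_f(z\pa_{t_\alpha}\mc J,z\pa_{t_\beta}\mc J)$, which follows from Proposition \ref{quantum-diff} via the Leibniz rule for the $\mbf{s}$-independent bilinear form $K_f$, supplemented by weighted-homogeneity bookkeeping to force constancy; I expect the good-basis route above to be the cleanest. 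I should also check that Lemma \ref{exp-isotropy}, stated for $\varphi_i\in B_F$, genuinely extends to $B_F[[z]][[\mbf{s}]]$, which it does by sesquilinearity since only nonnegative powers of $z$ are introduced.
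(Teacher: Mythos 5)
Your proof is correct and follows essentially the same route as the paper: combine the inclusion $z\pa_{t_\alpha}\mc J\in\mc H_+$ with Lemma \ref{exp-isotropy} to force all $z$-powers to be $\geq n$, and the inclusion $z\pa_{t_\alpha}\mc J\in[\phi_\alpha d^n\mathbf{x}]+\mc H_-$ together with the good-basis identity $K_f([\phi_\gamma d^n\mathbf{x}],[\phi_\delta d^n\mathbf{x}])=z^ng_{\gamma\delta}$ to force all $z$-powers to be $\leq n$ with $z^n$-coefficient $g_{\alpha\beta}$. The paper states these two containments in one line each; your write-up merely makes the termwise expansion and the extension of Lemma \ref{exp-isotropy} to $B_F[[z]][[\mathbf{s}]]$ explicit.
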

\begin{proof} We adopt the same notations as in the above proof. Since $z\pa_{t_\alpha}\mc J\in \mc H_+$,
$$
   K_f(z\pa_{t_\alpha}\mc J, z\pa_{t_\beta}\mc J)\in z^n \C[[z]][[\mathbf{s}]]
$$
by Lemma \ref{exp-isotropy}. Since also $z\pa_{t_\alpha}\mc J=[\phi_\alpha d^n\mathbf{x}]+\mc H_-\in z\mc H_-$, we have $$
K_f(z\pa_{t_\alpha}\mc J, z\pa_{t_\beta}\mc J)\in z^ng_{\alpha\beta}+ z^{n-1}\C[z^{-1}][[\mathbf{s}]].
$$
The lemma follows from the above two properties.
\end{proof}

\begin{cor}
Let $A_{\alpha\beta\gamma}(\mathbf{t}):=\sum_{\delta}A_{\alpha\beta}^\delta g_{\delta\gamma}$. Then $A_{\alpha\beta\gamma}$ is symmetric in $\alpha, \beta, \gamma$.
\end{cor}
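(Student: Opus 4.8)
The plan is to deduce the full symmetry of $A_{\alpha\beta\gamma}$ from two separate transpositions. Symmetry in the first two indices is immediate: since $A_{\alpha\beta}^\delta$ is read off from $\pa_{t_\alpha}\pa_{t_\beta}\mc J$ and mixed partials commute, we have $A_{\alpha\beta}^\delta=A_{\beta\alpha}^\delta$, hence $A_{\alpha\beta\gamma}=A_{\beta\alpha\gamma}$. It therefore suffices to establish the single additional identity $A_{\alpha\beta\gamma}=A_{\alpha\gamma\beta}$; the transpositions $(\alpha\,\beta)$ and $(\beta\,\gamma)$ generate $S_3$, so these two together yield full symmetry.

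For the remaining identity I would exploit the preceding Lemma together with the sesquilinearity of the higher residue pairing. Write $\xi_\alpha:=z\pa_{t_\alpha}\mc J$, so that the Lemma reads $K_f(\xi_\beta,\xi_\gamma)=z^n g_{\beta\gamma}$, a quantity constant in $\mathbf{t}$. Because $K_f$ is the $\C[[\mathbf{s}]]$-bilinear extension of a pairing that does not itself depend on $\mathbf{s}$, the coordinate derivative $\pa_{t_\alpha}$ obeys the Leibniz rule against it; differentiating the constant $z^n g_{\beta\gamma}$ then gives
$$
0=K_f(\pa_{t_\alpha}\xi_\beta,\xi_\gamma)+K_f(\xi_\beta,\pa_{t_\alpha}\xi_\gamma).
$$
From Proposition \ref{quantum-diff} one has $\pa_{t_\alpha}\xi_\beta=z\pa_{t_\alpha}\pa_{t_\beta}\mc J=z^{-1}\sum_\delta A_{\alpha\beta}^\delta\,\xi_\delta$, and likewise for $\pa_{t_\alpha}\xi_\gamma$.

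Substituting these expressions and pulling the factors of $z^{-1}$ out through property (3) of $K_f$ is the crux of the argument. Pulling $z^{-1}$ out of the first slot contributes $z^{-1}$, while pulling it out of the second slot contributes $-z^{-1}$ (take $v(z)=-z^{-1}$, so that $v(-z)=z^{-1}$); evaluating the resulting pairings by the Lemma and using the symmetry $g_{\delta\gamma}=g_{\gamma\delta}$ of the residue pairing gives
$$
0=z^{n-1}\sum_\delta A_{\alpha\beta}^\delta g_{\delta\gamma}-z^{n-1}\sum_\delta A_{\alpha\gamma}^\delta g_{\delta\beta}=z^{n-1}\bigl(A_{\alpha\beta\gamma}-A_{\alpha\gamma\beta}\bigr),
$$
which is exactly the wanted identity. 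The only real subtlety, and the step I would verify most carefully, is this sign bookkeeping coming from the half-twisted ($z\mapsto -z$) nature of $K_f$: it is precisely the opposite signs in the two slots that convert the symmetric Leibniz relation into the skew combination $A_{\alpha\beta\gamma}-A_{\alpha\gamma\beta}$, rather than collapsing to a vacuous identity. Once this is in hand, combining with the first-two-index symmetry completes the proof.
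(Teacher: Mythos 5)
Your proof is correct and follows essentially the same route as the paper: the paper's (one-line) argument is precisely to differentiate the constant pairing $K_f(z\pa_{t_\alpha}\mc J, z\pa_{t_\beta}\mc J)=z^n g_{\alpha\beta}$ in a flat direction and then invoke Proposition \ref{quantum-diff}, which is what you carry out, with the sesquilinear sign bookkeeping ($v(z)=-z^{-1}$ in the second slot) done correctly. Your write-up simply makes explicit the details the paper leaves implicit, including the first-two-index symmetry from commuting mixed partials.
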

\begin{proof}
By the previous lemma, $\pa_{t_\gamma} K_f(z\pa_{t_\alpha}\mc J, z\pa_{t_\beta}\mc J)=0$. The corollary now follows from Proposition \ref{quantum-diff}.
\end{proof}

The   properties in the propositions of this subsection  can be summarized as follows. The triple $(\pa_{t_\alpha}, A_{\alpha\beta}^\gamma, g_{\alpha\beta})$ defines a (formal) Frobenius manifold structure on a neighborhood $S$ of the origin with $\{t_\alpha\}$ being the flat coordinates, together with the potential function  $\mc F_0(\mathbf{t})$   satisfying
$$
A_{\alpha\beta\gamma}(\mathbf{t})=\pa_{t_\alpha}\pa_{t_\beta}\pa_{t_\gamma}\mc F_0(\mathbf{t}).
$$
It is not hard to see that $\mc F_0(\mathbf{t})$ is homogeneous of degree $3-\hat{c}_f$.
As in the next proposition, the potential function $\mc F_0(\mathbf{t})$ can also be computed perturbatively. Let
$$
\mc F_0(\mathbf{t})=\mc F_{0,(\leq N)}(\mathbf{t})+O(\mathbf{t}^{N+1}).
$$
\begin{prop}\label{prop-order} The potential function  $\mc F_0$ associated to the unique  pair $(\zeta, \mc J)$ satisfies
 $$
     \pa_{t_\alpha}\mc F_0(\mathbf{t})=\sum_\beta g_{\alpha \beta} \mc J_{-2}^\beta(\mathbf{s}(\mathbf{t})).
$$
Moreover,   $\mc F_0^{(\leq N)}(\mathbf{t})$ is determined by $\zeta_{(\leq N-3)}(\mathbf{s})$.
\end{prop}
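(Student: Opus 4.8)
The plan is to read the potential directly off the quantum differential equation of Proposition \ref{quantum-diff}. First I would isolate the $z^0$-component of $z^2\pa_{t_\alpha}\pa_{t_\beta}\mc J=\sum_\gamma zA_{\alpha\beta}^\gamma\pa_{t_\gamma}\mc J$. Writing $\mc J=\sum_{m\le 0}z^m\mc J_m$, the term $\mc J_0=[d^n\mathbf{x}]$ is constant and $\mc J_{-1}=\sum_\gamma t_\gamma[\phi_\gamma d^n\mathbf{x}]$ is linear in $\mathbf{t}$ (since $t_\gamma=\mc J_{-1}^\gamma$ by definition), so both are annihilated by $\pa_{t_\alpha}\pa_{t_\beta}$; hence the left-hand side contributes $\pa_{t_\alpha}\pa_{t_\beta}\mc J_{-2}$ at order $z^0$, while the right-hand side contributes $\sum_\gamma A_{\alpha\beta}^\gamma[\phi_\gamma d^n\mathbf{x}]$ (using $z\pa_{t_\gamma}\mc J=[\phi_\gamma d^n\mathbf{x}]+O(z^{-1})$ from the proof of Proposition \ref{quantum-diff}). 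Matching coefficients yields the key identity $\pa_{t_\alpha}\pa_{t_\beta}\mc J_{-2}^\gamma=A_{\alpha\beta}^\gamma$. Setting $G_\alpha:=\sum_\beta g_{\alpha\beta}\mc J_{-2}^\beta$ and invoking the symmetry of $A_{\alpha\beta\gamma}$ (the Corollary above), I obtain $\pa_{t_\beta}\pa_{t_\gamma}G_\alpha=A_{\alpha\beta\gamma}=\pa_{t_\alpha}\pa_{t_\beta}\pa_{t_\gamma}\mc F_0$.

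Next I would upgrade this equality of second derivatives to the claimed $\pa_{t_\alpha}\mc F_0=G_\alpha$. The difference $H_\alpha:=G_\alpha-\pa_{t_\alpha}\mc F_0$ satisfies $\pa_{t_\beta}\pa_{t_\gamma}H_\alpha=0$, so it is affine in $\mathbf{t}$. Its constant term vanishes because $\mc J=[d^n\mathbf{x}]$ at $\mathbf{s}=\mathbf 0$, whence $\mc J_{-2}|_{\mathbf 0}=0$, $G_\alpha|_{\mathbf 0}=0$, and $\pa_{t_\alpha}\mc F_0|_{\mathbf 0}=0$. Its linear term vanishes because the order-one-in-$\mathbf{s}$ part of $\mc J$ carries no $z^{-2}$-component: indeed $\mc J_{(1)}=z^{-1}\sum_\alpha s_\alpha[\phi_\alpha d^n\mathbf{x}]+\zeta_{(1)}$ with $\zeta_{(1)}\in B_F[[z]]$ having only non-negative $z$-powers, so $\mc J_{-2}=O(\mathbf{s}^2)=O(\mathbf{t}^2)$ and $G_\alpha=O(\mathbf{t}^2)$; since the potential is normalized to have vanishing $2$-jet at the origin, $\pa_{t_\alpha}\mc F_0=O(\mathbf{t}^2)$ as well. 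An affine function that is $O(\mathbf{t}^2)$ is zero, so $H_\alpha=0$. (The closedness $\pa_{t_\alpha}G_\beta=\pa_{t_\beta}G_\alpha$ needed for consistency is automatic here, but it can also be seen structurally from the vanishing of the $z^{n-1}$-coefficient in the constant pairing $K_f(z\pa_{t_\alpha}\mc J,z\pa_{t_\beta}\mc J)=z^ng_{\alpha\beta}$.)

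For the reconstruction claim I would track how orders in $\mathbf{s}$ propagate through $\pa_{t_\alpha}\mc F_0=G_\alpha=\sum_\beta g_{\alpha\beta}\mc J_{-2}^\beta$. Since $\mc F_0(\mathbf 0)=0$, its degree-$N$ part in $\mathbf{t}$ is obtained by integrating the degree-$(N-1)$ part of $G_\alpha$, so $\mc F_0^{(\le N)}$ is controlled by $G_\alpha$ to order $N-1$ in $\mathbf{t}$. Writing $(\cdot)_{(i)}$ for the order-$i$-in-$\mathbf{s}$ part, in $\mc J_{(k)}=\sum_{j=0}^k\big(e^{(F-f)/z}\big)_{(k-j)}\zeta_{(j)}$ the operator $\big(e^{(F-f)/z}\big)_{(k-j)}$ lowers $z$-powers by at most $k-j$ (each factor $F-f$ enters with a single $z^{-1}$ and the Brieskorn reduction of Theorem \ref{thm-decomp} adds only non-negative powers of $z$), while $\zeta_{(j)}\in B_F[[z]]$ has non-negative $z$-powers. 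Hence a $z^{-2}$-contribution at order $k$ forces $k-j\ge 2$, so $\mc J_{-2}^{(k)}$ uses only $\zeta_{(\le k-2)}$; likewise $\mc J_{-1}^{(k)}$ uses only $\zeta_{(\le k-1)}$, so the change of variables $\mathbf{s}(\mathbf{t})$ inverting $t_\alpha=\mc J_{-1}^\alpha(\mathbf{s})$ up to order $m$ uses only $\zeta_{(\le m-1)}$.

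Assembling these gives the count, and this assembly is where I expect the main technical obstacle. Because $G_\alpha=O(\mathbf{t}^2)$, its degree-$(N-1)$ part requires $\mc J_{-2}$ only to order $N-1$ (hence $\zeta_{(\le N-3)}$) and, through the substitution $\mathbf{s}=\mathbf{s}(\mathbf{t})$, the inverse change of variables only to order $N-2$ (hence $\mc J_{-1}$ to order $N-2$, i.e.\ again $\zeta_{(\le N-3)}$); the vanishing of $G_\alpha$ to second order is precisely what saves the extra order and lands the count on $N-3$ rather than $N-2$. Reconciling the $z$-power shift under the exponential--Brieskorn reduction, the integration that raises the $\mathbf{t}$-degree by one, and the triangular change of variables $\mathbf{s}\leftrightarrow\mathbf{t}$ is the delicate bookkeeping; by contrast, the identification $\pa_{t_\alpha}\mc F_0=G_\alpha$ is essentially forced once the $z^0$-truncation of the quantum differential equation is isolated.
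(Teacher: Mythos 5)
Your proposal is correct and follows essentially the same route as the paper: the first identity is extracted from the quantum differential equation of Proposition \ref{quantum-diff} (your $z^0$-coefficient computation $\pa_{t_\alpha}\pa_{t_\beta}\mc J_{-2}^\gamma=A_{\alpha\beta}^\gamma$, symmetry of $A_{\alpha\beta\gamma}$, and the vanishing $2$-jet normalization make precise what the paper dismisses as ``follows directly''), and your order-counting for the second claim — $\mc F_0^{(\le N)}$ needs $\mc J_{-1}$ to order $N-2$ and $\mc J_{-2}$ to order $N-1$, while $\mc J_{m,(\le N)}$ depends only on $\zeta_{(\le N+m)}$ via the $z$-power bound on the exponential operator — is exactly the paper's argument. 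The only difference is that you supply the bookkeeping details the paper leaves as ``easy to see.''
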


\begin{proof}
The first statement follows directly from Proposition \ref{quantum-diff}.

Recall $\zeta(\mathbf{s})=\zeta_{(\leq N)}(\mathbf{s})+O(\mathbf{s}^{N+1})$. Let $\mc J^\alpha_{m}(\mathbf{s})= \mc J^\alpha_{m, (\leq N)}(\mathbf{s})+O(\mathbf{s}^{N+1})$.
It is easy to see that $\mc F_0^{(\leq N)}(\mathbf{t})$ only depends on $\mc J^\alpha_{-1, (\leq N-2)}(\mathbf{s})$, $\mc J^\alpha_{-2, (\leq N-1)}(\mathbf{s})$, and $\mc J^\alpha_{m, (\leq N)}(\mathbf{s})$ only depends on $\zeta_{(\leq N+m)}(\mathbf{s})$. Hence, the second statement follows.
\end{proof}
\begin{rmk} By Remark \ref{rmk-primitive}, $\zeta$ is in fact an analytic primitive form. Therefore,  both $t_\alpha$ and $\mc F_0(\mathbf{t})$ are in fact analytic functions of $\mathbf{s}$ at the germ $\mathbf{s}=0$.
\end{rmk}

\subsection{Computation for exceptional unimodular singularities}\label{sec-14}  We start with the next proposition, which    follows from a related statement  for Brieskorn lattices \cite{Hertling-classifyingspace}. An explicit calculation of the moduli space of good sections for general weighted homogenous polynomials is  also given in \cite{LLSaito,Saito-uniqueness}.  For exposition, we include a proof here.
\begin{prop}\label{good-basis-uniqueness}
If $f$ is one of the 14 exceptional unimodular singularities, then there exists a unique good section $\{[\phi_\alpha d^n\mathbf{x}]\}_{\alpha=1}^\mu$, where $\{\phi_\alpha\}\subset \C[\mathbf{x}]$ are (arbitrary) weighted homogeneous representatives of a basis of the Jacobi algebra $\Jac(f)$.
\end{prop}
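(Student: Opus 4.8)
The plan is to reduce the whole statement to a single combinatorial property of the exponents of $f$, and then to extract uniqueness, existence, and independence of the chosen representatives purely formally from the $\Q$-grading together with the homogeneity of the higher residue pairing $K_f$.

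First I would record the key fact $(\ast)$: for each of the $14$ exceptional unimodular singularities, no two of the exponents $d_\alpha:=\deg(\phi_\alpha)$ (the weighted degrees of a homogeneous basis $\{\phi_\alpha\}$ of $\Jac(f)$) differ by a nonzero integer. Since all exponents lie in $[0,\hat{c}_f]$ and $\hat{c}_f\in(1,2)$ for all $14$ cases, the only candidate positive integer difference is $1$. Two defining features of the weighted-homogeneous representative in each exceptional family (Saito) rule this out: there is no deformation parameter of degree $0$, i.e.\ no exponent equals $1$; and there is exactly one exponent exceeding $1$, necessarily the top exponent $\hat{c}_f$. Combined with the Poincar\'e symmetry $\dim\Jac(f)_d=\dim\Jac(f)_{\hat{c}_f-d}$, a hypothetical pair of exponents $(d,d+1)$ with $d\geq 0$ and $d+1\leq\hat{c}_f$ would force either $d=0$ (so $1=d+1$ is an exponent), or $d\in(0,1)$ and $d+1\in(1,\hat{c}_f]$; in the latter case the uniqueness of the exponent $>1$ gives $d+1=\hat{c}_f$, whence $\hat{c}_f-d=1$ is an exponent by symmetry. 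Either way $1$ is an exponent, a contradiction. (Alternatively, $(\ast)$ can be verified directly from the tables, as these form a finite list.)

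Given $(\ast)$, I would observe that each weighted-degree-$d$ graded piece of $\mc H_f^{(0)}$ contains no genuine positive powers of $z$: a term $z^k[\psi\, d^n\mathbf{x}]$ of total degree $d$ with $k\geq 1$ would require $\psi$ of Jacobi-degree differing from an exponent by $k\geq 1$, impossible by $(\ast)$. Hence each graded piece is canonically $\{[\phi\, d^n\mathbf{x}]\mid \phi \text{ homogeneous}\}$, and the reduction map $\mc H_f^{(0)}\to\Omega_f$ restricts to an isomorphism on it. Two consequences follow at once: the span $\mathrm{Span}_{\C}\{[\phi_\alpha d^n\mathbf{x}]\}$ is independent of the homogeneous representatives $\phi_\alpha$ and equals the full ``$z^0$-part'' of $\mc H_f^{(0)}$; and any grading-preserving section $\sigma\colon\Omega_f\to\mc H_f^{(0)}$ must land in this $z^0$-part and, being a section where reduction is an isomorphism, is uniquely determined. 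This gives uniqueness together with the stated explicit form.

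It remains to check condition (2) of Definition \ref{defn-good-section}, i.e.\ that this section is good. Because $K_f$ is homogeneous of degree $n$ and $z$ is the only generator of each integer degree, $K_f([\phi_\alpha d^n\mathbf{x}],[\phi_\beta d^n\mathbf{x}])=\kappa_{\alpha\beta}\,z^{d_\alpha+d_\beta}$ is a single monomial lying in $z^n\C[[z]]$, whose $z^n$-term is the residue pairing $\eta_f$. A nonzero excess term would need $d_\alpha+d_\beta$ to be an integer $>n$, hence exactly $n+1$ (since $d_\alpha+d_\beta<n+\hat{c}_f<n+2$); equivalently the two exponents sum to $\hat{c}_f+1$, which by the symmetry $d\leftrightarrow\hat{c}_f-d$ is the same as the existence of a pair of exponents differing by $1$ — excluded by $(\ast)$. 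Thus $K_f(\Im(\sigma),\Im(\sigma))\subset z^n\C$, so the unique grading-preserving section is good. The main obstacle is establishing $(\ast)$; once the exponents are known to have no nonzero integer differences, uniqueness, existence, and representative-independence are all automatic. This is precisely the mechanism by which the feared mixing of positive- and negative-degree deformations is prevented from producing extra good sections.
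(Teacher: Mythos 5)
Your proof is correct, and its underlying mechanism is the same one the paper exploits: the $\Q$-grading, the degree-equivariance of $K_f$, and the fact that the only elements of $\C[[z]]$ of a given homogeneous degree are multiples of a single power of $z$, which together force both the uniqueness of a grading-preserving section and the vanishing of all terms of $K_f(\Im\sigma,\Im\sigma)$ beyond $z^n$. Where you genuinely depart from the paper is in how the exponent arithmetic is established. The paper works out $E_{12}$ explicitly — listing the twelve exponents, checking by inspection ("a simple degree counting") that no forbidden coincidences occur, and running the $R_\alpha^\beta$-decomposition argument for uniqueness — and then asserts the other thirteen cases are "established similarly," i.e.\ it is implicitly a case-by-case verification over the table. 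You instead isolate the single combinatorial property $(\ast)$ (no two exponents differ by a nonzero integer) and derive it uniformly from the structural characterization the paper itself quotes for exceptional unimodular singularities — exactly one deformation parameter of negative degree and none of degree zero — combined with Poincar\'e duality of the exponents and $\hat{c}_f<2$. This buys a table-free, uniform argument that makes transparent exactly why these singularities are rigid, and it shows the conclusion holds verbatim for any weighted homogeneous singularity whose exponents have no nonzero integer differences; the paper's route buys concreteness, since the explicit degree data is needed anyway for the perturbative computations that follow. Your treatment also handles repeated exponents cleanly (only nonzero integer differences are excluded, and reduction to $\Omega_f$ is an isomorphism on each graded piece), a point that is invisible in the paper's $E_{12}$ model case, where all exponents happen to be distinct, but which matters for cases such as $Q_{12}$.

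One notational slip to fix: having defined $d_\alpha=\deg\phi_\alpha$, the identity $K_f([\phi_\alpha d^n\mathbf{x}],[\phi_\beta d^n\mathbf{x}])=\kappa_{\alpha\beta}\,z^{d_\alpha+d_\beta}$ is wrong as written; by equivariance the exponent is $d_\alpha+d_\beta+2\sum_i q_i=d_\alpha+d_\beta+n-\hat{c}_f$. Your subsequent inequalities (an integer $z$-power $>n$ must equal $n+1$, equivalently the two Jacobi degrees sum to $\hat{c}_f+1$, excluded by duality and $(\ast)$) are exactly those of the corrected exponent, so the argument is unaffected — in that paragraph you have silently used $d_\alpha$ to mean $\deg[\phi_\alpha d^n\mathbf{x}]=\deg\phi_\alpha+\sum_i q_i$. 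Likewise, the phrase ``$K_f$ is homogeneous of degree $n$'' should read: $K_f$ is degree-preserving, takes values in $z^n\C[[z]]$, and its $z^n$-coefficient is the residue pairing $\eta_f$.
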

\begin{proof}
  We give the details for  $E_{12}$-singularity. The other 13 types are established similarly.

The $E_{12}$-singularity is given by $f=x^3+y^7$ with $\deg x={1\over 3}, \deg y={1\over 7}$, and central charge $\hat c_f={22\over 21}$.  We consider the weighted homogeneous monomials
$$
  \{\phi_1, \cdots, \phi_{12}\}=\{1,y,y^2, x, y^3, xy, y^4, xy^2, y^5, xy^3, xy^4, xy^5\}\subset \C[x,y]
$$
which represent a basis of $\Jac(f)$. The normalized residue pairing $g_{\alpha\beta}$  between $\phi_\alpha, \phi_\beta$ is equal to $1$ if $\alpha+\beta=13$, and $0$ otherwise. Since $K_f$ preserves the $\Q$-grading,
$$
    \deg K_f([\phi_\alpha dxdy], [\phi_\beta dxdy])=\deg\phi_\alpha +\deg \phi_\beta+2-\hat c_f,
$$
which has to be an integer for a non-zero pairing. A simple degree counting implies that
$$
   K_f([\phi_\alpha dxdy], [\phi_\beta dxdy])=z^{2} g_{\alpha\beta}
$$
and therefore $\{[\phi_\alpha dxdy]\}$ constitutes a good basis.

Let $\{\phi_\alpha^\prime\}$ be another set of weighted homogeneous polynomials such that $\{[\phi_\alpha^\prime dxdy]\}$  gives a good basis. We can assume $\phi_\alpha^\prime\equiv \phi_\alpha$ as elements in $\Jac(f)$ and $\deg \phi_\alpha^\prime=\deg \phi_\alpha$. Since $[\phi_\alpha dxdy]$ forms a $\C[[z]]$-basis of $\mc H_f^{(0)}$, we can decompose
$$
   [\phi_\alpha^\prime dxdy]=\sum_\beta R_\alpha^\beta [\phi_\beta dxdy], \quad R_\alpha^\beta\in \C[[z]].
$$
By the weighted homogeneity, $R_\alpha^\beta$ is homogeneous of degree $\deg \phi_\alpha-\deg \phi_\beta$, which is not an integer unless $\alpha=\beta$. Thus $[\phi^\prime_\alpha dxdy]=[\phi_\alpha dxdy]$, and hence the uniqueness.
\end{proof}

 Let $\mc F_0$ be the potential function of the associated Frobenius manifold structure. Then $\mc F_0$ is an analytic function, as an immediate consequence of  the above uniqueness   together with the existence of the (analytic) primitive form.
 As will be shown in Lemma \ref{reconstruction-lemma}, we only need to compute $\mc F_{0, (\leq 4)}$ to prove mirror symmetry.

 We illustrate the perturbative calculation for the $E_{12}$-singularity $f=x^3+y^7$. The full result is summarized in the appendix by similar calculations.
We adopt the same notations as in the proof of  Proposition \ref{good-basis-uniqueness}.  By Proposition \ref{prop-order}, we only need $\zeta_{(\leq 1)}$ to compute $\mc F_{0, (\leq 4)}$, which is
$$
  \zeta_{(\leq 1)}=dxdy.
$$
Using the equivalence relation in $\mc H_f$, we can expand
$$
e^{(F-f)/z}(\zeta_{(\leq 1)})=\sum_{k=0}^3{(F-f)^k\over k!} z^{-k}\zeta_{(\leq 1)}+O(\mathbf{s}^4)
$$
in terms of the good basis $\{\phi_\alpha\}$.  We find the flat coordinates up to order $2$
$$\begin{array}{lll}
   t_1\doteq  s_1-\frac{s_5 s_7}{7}-\frac{s_3 s_9}{7},&
   t_2\doteq s_2-\frac{s_7^2}{7}-\frac{2 s_5 s_9}{7}, &
   t_3\doteq s_3-\frac{3 s_7 s_9}{7},\\
   t_4\doteq s_4-\frac{s_8 s_9}{7}-\frac{s_7 s_{10}}{7}-\frac{s_5 s_{11}}{7}-\frac{s_3
                 s_{12}}{7},&
   t_5\doteq  s_5-\frac{2 s_9^2}{7},&
   t_6\doteq  s_6-\frac{2 s_9s_{10}}{7}-\frac{2 s_7 s_{11}}{7}-\frac{2 s_5 s_{12}}{7},\\
   t_7\doteq s_7,&
   t_8\doteq  s_8-\frac{3 s_9 s_{11}}{7}-\frac{3 s_7 s_{12}}{7}, &
   t_9\doteq s_9,\\
   {}\!\!t_{10}\doteq  s_{10}-\frac{4 s_9s_{12}}{7},&
   {}\!\!t_{11} \doteq s_{11},&
   {}\!\!t_{12}\doteq s_{12}.
\end{array}$$

This allows us to solve the inverse function $s_\alpha=s_{\alpha}(\mathbf{t})$ up to order $2$. An straight-forward but tedious computation of the $z^{-2}$-term shows that in terms of flat coordinates
$$
\mc F_{0, (\leq 4)}=\mc F_{0}^{(3)}+\mc F_{0}^{(4)},
$$
where $\mc F_{0}^{(3)}$ is the third order term representing the algebraic structure of $\Jac(f)$
$$
\pa_{t_\alpha}\pa_{t_\beta}\pa_{t_\gamma}\mc F_{0}^{(3)}=\eta_f([\phi_\alpha\phi_\beta\phi_\gamma dxdy],[dxdy]).
$$
The fourth order term $\mc F_{0}^{(4)}$, which we call the \emph{4-point function}, is computed by
\begin{align*}
   -\mc F_{0}^{(4)}&={1\over 14} t_5 t_6 t_7^2+{1\over 18} t_6^3t_8+{1\over 7}t_5^2t_7t_8+{1\over 7} t_3t_7^2t_8+{1\over 6} t_4t_6t_8^2+{1\over 14} t_5^2t_6t_9+{1\over 7} t_3t_6t_7t_9\\
                 &\quad +{1\over7} t_3t_5t_8t_9+{1\over 7} t_2t_7t_8t_9+{1\over 14}t_2t_6t_9^2+{1\over 14} t_5^3t_{10}+{1\over 6} t_4t_6^2t_{10}+{2\over 7}t_3t_5t_7t_{10}+{1\over 14}t_2t_7^2 t_{10}\\
                 &\quad  +{1\over 6}t_4^2t_8t_{10}+{1\over 14}t_3^2 t_9t_{10}+{1\over 7}t_2t_5t_9t_{10}+{1\over 7}t_3t_5^2t_{11}+{1\over 6}t_4^2t_6t_{11}+{1\over 7}t_3^2t_7 t_{11}+{1\over 7}t_2t_5t_7t_{11} \\
               &\quad +{1\over 7}t_2t_3t_9t_{11}+{1\over 18}t_4^3t_{12} +{1\over 14} t_3^2t_5t_{12}+{1\over 14}t_2t_5^2t_{12}+{1\over 7}t_2t_3t_7t_{12}+{1\over 14}t_2^2t_9t_{12}.
\end{align*}

In particular, for our later use, we can read off
  $$\pa_{t_4}\pa_{t_4}\pa_{t_4}\pa_{t_{12}}\mathcal{F}_{0}|_{\mathbf{t}=\mathbf{0}}=-{1\over 3},\qquad  \pa_{t_2}\pa_{t_2}\pa_{t_9}\pa_{t_{12}}
  \mathcal{F}_{0}|_{\mathbf{t}=\mathbf{0}}=-{1\over 7}. $$

\section{Mirror Symmetry for exceptional unimodular singularities}\label{sec4}
In this section, we use two reconstruction results to prove the mirror symmetry conjecture between the 14 exceptional unimodular singularities and their FJRW mirrors both at genus $0$ and higher genera.

\subsection{Mirror symmetry at genus zero}
Throughout this subsection, we assume  $W^T$ to be one of    the 14 exceptional unimodular singularities in Table \ref{tab-exceptional-singularities}. We will consider  the  ring isomorphism
 $\Psi:  \Jac(W^T)\to (H_W, \bullet)$ defined in Proposition \ref{non-Krawitz}.
We will also denote the specified basis of $\Jac(W^T)$ therein by   $\{\phi_1, \cdots, \phi_\mu\}$ such that $\deg \phi_1\leq \deg \phi_2\leq \cdots\leq  \deg \phi_\mu$.   As have mentioned, there is a  {formal} Frobenius manifold structure on the FJRW ring $(H_W, \bullet)$ with a prepotential $\mc F_{0, W}^{\rm FJRW}$. We have also shown in the previous section that there is a Frobenius manifold structure with flat coordinates $(t_1, \cdots, t_\mu)$ associated to (the primitive form) $\zeta$ therein, whose prepotential will be denoted as $\mc F_{0, W^T}^{\rm SG}$ from now on.
   We introduce the primary correlators $\langle\cdots\rangle_{0, k}^{W^T, \rm SG}$ associated to the Frobenius manifold structure on B-side.
   The primary  correlators, up to linear combinations, are given by
     \begin{align}
        \langle\phi_{i_1}, \cdots, \phi_{i_k}\rangle_{0, k}^{W^T, \rm SG}={\pa^k \mc F_{0, W^T}^{\rm SG}\over \pa t^{i_1}\cdots \pa t^{i_k}}(0).
     \end{align}
As from the specified ring isomorphism $\Psi$ and \eqref{FJRW-ring}, we have
$$\langle \be_{\phi_i}, \be_{\phi_j}, \be_{\phi_k}\rangle_{0, 3}^{W}=\langle \phi_i, \phi_j, \phi_k\rangle_{0, 3}^{W^T, \rm SG}.$$
 As from Proposition \ref{thm-reformulathm1} and the computation   in section \ref{sec-14} and in the appendix, we have
  $$
   \langle \be_{x_i},   \be_{x_i}, \be_{M_i^T/x_i^2}, \be_{\phi_{\mu}}\rangle_{0, 4}^{W}=
       -\langle x_i, x_i, M_i^T/x_i^2, \phi_{\mu}\rangle_{0, 4}^{W^T, \rm SG}.$$
To deal with the sign, we will do the following modifications, as in  \cite[section 6.5]{FJR}.
We simply denote $(-1)^r:= e^{\pi \sqrt{-1} r}$. Let $\tilde{\mc F}_0^{\rm SG}$ denote the potential function of the Frobenius manifold structure $\tilde \zeta:=(-1)^{-\hat{c}_{W^T}}\zeta$. Set
$\tilde \phi_j:=(-1)^{-\deg \phi_j}\phi_j$ and define a map $\tilde \Psi: \Jac(W^T)\to H_W$ by $\tilde \Psi(\tilde \phi_j):=\Psi(\phi_j)$. Let $\tilde{\mathbf{t}}$ denote the flat coordinate of $\tilde{\mc F}_0^{\rm SG}$, namely
\begin{align}\label{eqn-flatcoord}
   \tilde t_j&= (-1)^{1-\deg t_j} t_j.
\end{align}
  As a consequence, we have $\tilde{\mc F}_{0,  W^T}^{(3), \rm SG}=\mc F_{0,  W^T}^{(3), \rm SG}$ and $
      \tilde{\mc F}_{0,  W^T}^{(4), \rm SG}=-\mc F_{0,  W^T}^{(4), \rm SG}$.  Denote $\tilde \be_{\tilde \phi_j}:=\tilde \Psi(\tilde \phi_j)$. Then
      $\tilde \Psi$ defines a pairing-preserving ring isomorphism, which is read off from the identities
      $\langle \tilde \be_{\tilde\phi_i}, \tilde \be_{\tilde \phi_j}, \tilde \be_{\tilde \phi_k}\rangle_{0, 3}^{W}=\langle \tilde \phi_i, \tilde \phi_j, \tilde \phi_k\rangle_{0, 3}^{W^T, \tilde \zeta, \rm SG}$, Moreover,
    \begin{align}\label{eqn-4pt}
   \langle \tilde \be_{\widetilde{x_i}},   \tilde \be_{\widetilde{x_i}}, \tilde \be_{\widetilde{M_i^T/x_i^2}}, \tilde \be_{\tilde \phi_{\mu}}\rangle_{0, 4}^{W}=
       \langle \widetilde{x_i}, \widetilde{x_i}, \widetilde{M_i^T/x_i^2}, \tilde \phi_{\mu}\rangle_{0, 4}^{W^T, \tilde \zeta, \rm SG}.
  \end{align}
 From now on, we will simplify the notations by ignoring the symbol  $\tilde{ }$ and the superscript $\tilde \zeta$.
In addition, we will simply denote both $H_W$ and $\Jac(W^T)$ as $H$, and simply denote the correlators on both sides as $\langle\phi_{i_1},\cdots, \phi_{i_k}\rangle_{0, k}$ (or $\langle \phi_{i_1},\cdots, \phi_{i_k}\rangle$), whenever there is no risk of confusion.
We have the following "Selection rule" for primary correlators.
\begin{lem}
A primary  correlator $\langle \phi_{i_1},\cdots, \phi_{i_k}\rangle_{0, k}$ on either A-side or  B-side is nonzero only if
    \begin{align}\label{eqn-deg-constraint}
      \sum_{j=1}^k\deg\phi_{i_j}=\hat c_{W^T}-3+k.
    \end{align}
 \end{lem}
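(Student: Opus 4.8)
The plan is to treat the A-side and the B-side separately, since on each side the assertion reduces to a single homogeneity count, and then to observe that the two resulting constraints coincide because $\hat c_W=\hat c_{W^T}$. I would record this last equality first. Writing $E_W\mathbf q=\mathbf 1$ for the weight vector $\mathbf q=(q_1,\dots,q_n)^T$ of $W$ (each monomial of $W$ being weighted homogeneous of degree $1$) and $E_{W^T}\mathbf q'=\mathbf 1$ for the weight vector $\mathbf q'$ of $W^T$, one has $\mathbf q=E_W^{-1}\mathbf 1$ and, since $E_{W^T}=E_W^T$, also $\mathbf q'=(E_W^{-1})^T\mathbf 1$. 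Hence $\sum_i q_i=\mathbf 1^T E_W^{-1}\mathbf 1$ and $\sum_i q_i'=\mathbf 1^T(E_W^{-1})^T\mathbf 1$ are the transposes of one another as $1\times 1$ matrices, so they agree; this gives $\hat c_W=n-2\sum_i q_i=n-2\sum_i q_i'=\hat c_{W^T}$.

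For the A-side, the correlator $\langle\phi_{i_1},\dots,\phi_{i_k}\rangle_{0,k}$ is, under the present conventions, the FJRW primary correlator $\LD\be_{\phi_{i_1}},\dots,\be_{\phi_{i_k}}\RD_0^W$, to which the degree formula \eqref{selection-non-primary} applies directly with $g=0$ and all $\ell_j=0$. This immediately yields $\sum_{j=1}^k\deg\be_{\phi_{i_j}}=\hat c_W-3+k$. Since $\Psi$ is degree-preserving (Proposition \ref{non-Krawitz}), $\deg\be_{\phi_{i_j}}=\deg\phi_{i_j}$, and substituting $\hat c_W=\hat c_{W^T}$ gives exactly \eqref{eqn-deg-constraint}.

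For the B-side, I would use the two inputs established in Section \ref{sec3}: the prepotential $\mc F_{0,W^T}^{\rm SG}(\mathbf t)$ is weighted homogeneous of degree $3-\hat c_{W^T}$, and each flat coordinate $t_\alpha$ is weighted homogeneous of degree $\deg s_\alpha=1-\deg\phi_\alpha$. A nonzero primary correlator $\pa_{t_{i_1}}\cdots\pa_{t_{i_k}}\mc F_{0,W^T}^{\rm SG}(0)$ forces the monomial $t_{i_1}\cdots t_{i_k}$ to occur in $\mc F_{0,W^T}^{\rm SG}$, so its total weighted degree must equal $3-\hat c_{W^T}$:
\[
\sum_{j=1}^k\bigl(1-\deg\phi_{i_j}\bigr)=3-\hat c_{W^T},
\]
which rearranges to \eqref{eqn-deg-constraint}. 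Passing to the tilde-modified potential $\tilde{\mc F}_0^{\rm SG}$ rescales each coordinate only by a root-of-unity factor (see \eqref{eqn-flatcoord}) and replaces each basis element by $\tilde\phi_j=(-1)^{-\deg\phi_j}\phi_j$, neither of which alters the weighted degree; so the same constraint survives the sign modifications of Section \ref{sec4}.

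The computations are entirely routine and there is no real obstacle, only bookkeeping. The one point requiring care is that ``$\deg$'' denotes three a priori different gradings --- the FJRW grading on $H_W$, the Jacobi-algebra grading on $\Jac(W^T)$, and the grading on flat coordinates --- so I would make explicit that the degree-preserving property of $\Psi$ reconciles the first two, while the homogeneity degree $3-\hat c_{W^T}$ of $\mc F_0$ together with $\deg t_\alpha=1-\deg\phi_\alpha$ reconciles the B-side count with the A-side one. With $\hat c_W=\hat c_{W^T}$ in hand, both sides produce the identical selection rule \eqref{eqn-deg-constraint}.
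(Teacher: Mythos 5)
Your proof is correct and follows essentially the same route as the paper's: the A-side case from the FJRW selection rule \eqref{selection-non-primary} together with $\hat c_W=\hat c_{W^T}$, and the B-side case from the weighted homogeneity of $\mc F_{0,W^T}^{\rm SG}$ of degree $3-\hat c_{W^T}$ combined with $\deg\phi_{i_j}=1-\deg t_{i_j}$. The only additions are details the paper leaves implicit --- the transpose argument for $\hat c_W=\hat c_{W^T}$ and the observation that the sign modification of Section \ref{sec4} preserves weighted degrees --- both of which are correct.
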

 \begin{proof}
  The A-side case follows from formula \eqref{selection-non-primary} and    $\hat c_W=\hat c_{W^T}$.
      The   primary correlator   on B-side is given by $\pa_{t_{i_1}}\cdots\pa_{t_{i_k}}\mc F_{0, W^T}^{\rm SG}(0)$, where $\deg \phi_{i_j}=1-\deg t_{i_j}$. Then the statement follows, by noticing that  $\mc F_{0, W^T}^{\rm SG}(0)$ is weighted homogenous of degree $3-\hat c_{W^T}$.
 \end{proof}

A homogeneous  $\alpha\in H$ is called a \emph{primitive class} with respect to the specified basis $\{\phi_j\}$,  if it cannot be written as
$\alpha= \alpha_1 \bullet \alpha_2$ for $0<\deg\alpha_i<\deg\alpha$. A primary correlator $\langle \phi_{i_1}, \cdots, \phi_{i_k}\rangle_{0, k}$ is called \emph{basic} if at least $k-2$ insertions $\phi_{i_j}$ are primitive classes.
Now Theorem \ref{g=0-mirror} is  a direct consequence of the equalities \eqref{eqn-4pt}  and the following statement:
 \begin{lem}[Reconstruction Lemma]\label{reconstruction-lemma}
 If $W^T$ is one of the 14 exceptional singularities, then all the following hold.
\begin{enumerate}
\item \label{>5-point}
The prepotential $\mc F_0$  is uniquely determined from basic correlators $\LD\dots\RD_{0,k}$ with $k\leq 5$.

\item \label{5-point}
All basic correlators  $\LD\phi_{i_1}, \cdots, \phi_{i_5}\RD_{0,5}$ vanish.

\item \label{mirror-4pt}
All the 4-point basic correlators are uniquely determined from the formula \eqref{4-point}.

\end{enumerate}

\end{lem}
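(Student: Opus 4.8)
The engine of all three statements is the reduced WDVV equation \eqref{eq:WDVV}, which holds verbatim on the A-side and the B-side, since each carries a formal Frobenius manifold structure with the same pairing, the same product $\bullet$ (identified by $\Psi$) and the same selection rule \eqref{eqn-deg-constraint}. Thus every manipulation below is a formal consequence of \eqref{eq:WDVV} and is valid on both sides at once, so it suffices to argue once. The recurring move is: if an insertion is \emph{decomposable}, i.e.\ of the form $\phi'\bullet\phi''$ with $\deg\phi'>0$ and $\deg\phi''>0$, then \eqref{eq:WDVV} rewrites the correlator as a sum of correlators in which $\phi'$ and $\phi''$ have been reattached to neighbouring insertions, plus the term $S_k$ involving only correlators with strictly fewer marked points. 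Since $H\cong\Jac(W^T)$ is generated by its primitive classes, every non-primitive insertion is decomposable, so this move applies precisely when a correlator fails to be basic.

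For part (\ref{>5-point}) I would argue in two steps. First, reduce to basic correlators: ordering correlators by a lexicographic complexity (the number of marked points $k$, then the number of non-primitive insertions, then the total degree carried by those insertions), one applies the move above to a decomposable insertion to strictly lower the complexity, the $S_k$-terms being controlled by the induction on $k$; this expresses every correlator through basic ones. Second, basic correlators with $k\ge 6$ vanish by the selection rule: in such a correlator at least $k-2$ insertions are primitive of positive degree (the identity being excluded for $k\ge 4$ by the string equation), hence of degree at most $\delta:=\max\{\deg\phi:\phi\text{ primitive}\}$, while the remaining at most two insertions have degree at most $\deg\phi_\mu=\hat c_{W^T}$, so \eqref{eqn-deg-constraint} forces $\hat c_{W^T}+k-3\le (k-2)\delta+2\hat c_{W^T}$. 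For the chosen representatives every primitive class is a variable of weight $<\tfrac12$ (a weight-$\tfrac12$ variable can occur only via a pure quadratic term, which is excluded, and is in any case reducible in $\Jac(W^T)$), so $\delta<\tfrac12$, and a direct check gives $\hat c_{W^T}+4\delta<3$ for each of the fourteen singularities; this already kills $k=6$, hence all $k\ge 6$. Therefore $\mc F_0$ is determined by basic correlators with $k\le 5$.

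Part (\ref{5-point}) is then a finite verification. A five-point correlator all of whose insertions are primitive vanishes immediately, since then $\sum_j\deg\phi_{i_j}<5\delta<\tfrac{5}{2}<\hat c_{W^T}+2$, contradicting \eqref{eqn-deg-constraint}. Hence a nonzero five-point basic correlator must carry a decomposable insertion; applying \eqref{eq:WDVV} expresses it through four-point correlators (supplied by part (\ref{mirror-4pt})) together with the lower-point $S_5$-terms, and one checks case by case, using the explicit generators and degrees of each singularity, that the resulting combination is zero.

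Part (\ref{mirror-4pt}) is the crux. A four-point basic correlator has the form $\LD\phi_e,g,g',\gamma\RD$ with $g,g'$ primitive, and \eqref{eqn-deg-constraint} forces $\deg g+\deg g'+\deg\phi_e+\deg\gamma=\hat c_{W^T}+1$; by maximality of $\phi_\mu$ this already constrains a nonzero correlator to essentially pair a product against the top class. The plan is to use the four-point WDVV (where $S_4=0$) to transport products through the correlator and thereby connect an arbitrary four-point basic correlator to the normalized family \eqref{4-point}, $\LD\be_{x_i},\be_{x_i},\be_{M_i^T/x_i^2},\be_{\phi_\mu}\RD_0^{W}=q_i$, using the explicit multiplication table of $\Jac(W^T)$. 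The main obstacle lives here: one must show that the WDVV relations actually determine \emph{every} four-point basic correlator, i.e.\ that the linear system relating the unknowns to \eqref{4-point} is non-degenerate. This is delicate exactly in the broad cases $W^T\in\{Q_{11},S_{11}\}$, where the relevant generator corresponds to a broad sector and the naive products can vanish; there one must invoke Lemma \ref{lm:non-Krawitz} to guarantee that the coefficient linking the unknown correlator to \eqref{4-point} is nonzero, and separately handle the slots $i$ with $M_i^T=x_i^2$, for which no seed of the form \eqref{4-point} is available. Granting this, both sides obey the identical reconstruction of parts (\ref{>5-point})--(\ref{mirror-4pt}) and, after the sign normalization \eqref{eqn-4pt}, share the same seed values, whence $\mc F_{0,W^T}^{\rm SG}=\mc F_{0,W}^{\rm FJRW}$, which is Theorem \ref{g=0-mirror}.
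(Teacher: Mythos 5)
Your part (\ref{>5-point}) is essentially the paper's own argument: the WDVV move \eqref{eq:WDVV} reduces everything to basic correlators by induction on $k$ and on the non-primitive content, and then the selection rule gives $\hat c_{W^T}+k-3\le (k-2)\delta+2\hat c_{W^T}$, where $\delta<\tfrac{1}{2}$ is the maximal degree of a primitive class (the paper's $P$), which kills $k\ge 6$ since $\hat c_{W^T}+4\delta<3$ in all fourteen cases. Parts (\ref{5-point}) and (\ref{mirror-4pt}), however, contain genuine gaps. For (\ref{5-point}), your central claim --- that applying \eqref{eq:WDVV} to a basic five-point correlator with a decomposable insertion ``expresses it through four-point correlators together with the lower-point $S_5$-terms'' --- is false as stated: the WDVV move never lowers the number of marked points of the leading terms. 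It replaces $\LD\dots,\alpha_a,\alpha_b\bullet\alpha_c,\alpha_d\RD_{0,5}$ by other \emph{five}-point correlators in which the product is reattached to $\alpha_a$ or $\alpha_d$, plus $S_5$; those new five-point correlators are again basic (or reduce to basic ones), so what you actually obtain is a linear system among the unknown basic five-point correlators with inhomogeneous $S_5$-terms, and their vanishing is precisely what remains to be proved --- you defer it entirely to an unspecified ``case by case'' check. Your degree estimate $5\delta<\hat c_{W^T}+2$ disposes only of the all-primitive case, which is not where the content lies. The paper's proof of (\ref{5-point}) uses no WDVV at all: it is a pure selection-rule integrality check --- for $x^p+y^q$ the constraint \eqref{fermat-deg}, $aq+bp=4pq-2p-2q$, is incompatible with the exponent bounds \eqref{5F}, $a\le 2p-1$, $b\le 2q-1$, $a+b\le 2p+2q-5$ --- with the remaining singularities handled by the same elementary counting.

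For (\ref{mirror-4pt}) you correctly isolate the crux, namely that the WDVV relations connect every basic four-point correlator to the seeds \eqref{4-point} through a solvable system, but you then write ``granting this,'' i.e.\ you assume exactly the statement to be proved. The paper establishes it by exhibiting explicit WDVV chains case by case (Fermat, chain, loop and $Q_{10}$ are written out; for $S_{12}$ one writes $18$ WDVV equations determining all $21$ basic correlators from the $3$ seeds). Moreover, your appeal to Lemma \ref{lm:non-Krawitz} inside this part is misplaced: broad sectors are an A-side phenomenon that enters in constructing the ring isomorphism $\Psi$ and in computing the seed values (Propositions \ref{non-Krawitz} and \ref{thm-reformulathm1}); once those are taken as inputs, Lemma \ref{reconstruction-lemma} is a formal statement about any Frobenius structure whose underlying ring is $\Jac(W^T)$, and its proof never refers to broad versus narrow. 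Your concern about the slots with $M_i^T=x_i^2$ resolves itself for the same reason: in those cases $q_i=\tfrac{1}{2}$ and the relation $\pa_{x_i}W^T=0$ makes $x_i$ decomposable in $\Jac(W^T)$, so $x_i$ is not primitive and no seed for it is required.
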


\noindent{\em Proof of \eqref{>5-point}:}
The potential function $\mc F_0$ satisfies the WDVV equation \eqref{WDVV} (hence the formula \eqref{eq:WDVV}). We can assume that $\LD \cdots \RD_{0, k}$ is not of type $\LD 1,\cdots \RD_{0, k}$, $k\geq4$, (otherwise it vanishes according to string equation, or the invariance of the primitive form along the $\phi_1$-direction where we notice $\phi_1=1$). Consider a correlator $\LD \cdots, \alpha_a,\alpha_b\bullet\alpha_c,\alpha_d \RD_{0, k}$, with last three insertions non-primitive.  By formula \eqref{eq:WDVV}, such correlator is the sum of $S_k$ together with three terms whose insertion replaces $\alpha_b\bullet\alpha_c$ with lower degree ones $\alpha_b$ or $\alpha_c$ at the same position. Repeating this will turn $\alpha_b\bullet\alpha_c$ into a primitive class, up to product of correlators with less number of insertions.
   By induction both on the degree of non-primitive classes and $k$, we can reduce any correlator to a linear combination of   basic correlators.

 Now we assume that $\langle\phi_{i_1}, \cdots, \phi_{i_k}\rangle_{0, k}$ is a nonzero basic correlator. Then we can write
  $\phi_{i_1}\bullet\cdots\bullet\phi_{i_k}=x^ay^bz^c$. It follows from  the degree constriant \eqref{eqn-deg-constraint} that
\begin{equation}\label{eq:deg}
\hat{c}_{W^T}-3+k=\sum_{j=1}^{k}\deg\phi_{i_j}=aq_x+bq_y+cq_z,
\end{equation}
Let us denote by $P$   the maximal numbers among the degree of a generator $x,y$ and $z$ (or $x$ and $ y$ if $W^T=W^T(x, y)$ is in two variables $x, y$ only). By direct calculations, we conclude
$$k\leq\frac{\hat{c}_{W^T}+1}{1-P}+2<6.$$

\noindent{\em Proof of \eqref{5-point}:} For $W^T=x^p+y^q$,   $x,y$ are generators for the ring structure $H$.
 The multiplications for all the insertions will be in a form of $x^a\bullet y^b$. By the degree constraint, a nonzero  basic correlator  $\LD\phi_{i_1}, \cdots, \phi_{i_k}\RD_{0,k}$ satisfies
\begin{equation}
\label{fermat-deg}
aq+bp=(k-1)pq-2p-2q,
\end{equation}
On the other hand, we assume the  first $k-2$ insertions to be primitive classes, so that they are  either $x$ or $y$. The top degree class $\phi_{\mu}=x^{p-2}\bullet y^{q-2}$ is of degree  $2-\frac{2}{p}-\frac{2}{q}$. Therefore we have the following inequalities required for non-vanishing correlator:
\begin{equation}\label{5F}
a\leq k-2+2(p-2),\quad b\leq k-2+2(q-2),\quad  a+b\leq k-2+2(p-2+q-2).
\end{equation}
It is easy to see that there is no $(a,b)$ satisfying both \eqref{fermat-deg} and \eqref{5F} if $k=5$. Hence $\LD\phi_{i_1}, \cdots, \phi_{i_5}\RD_{0,5}=0$. The arguments  for the remaining $W^T$ on B-side and all the $W$ on A-side are all similar and  elementary, details of which are left to the readers.

\noindent{\em Proof of \eqref{mirror-4pt}:}
Let us start with $W^T=x^p+y^q$, where we notice that $p,q$ are coprime.  The degree constraint  \eqref{fermat-deg} with $k=4$ implies that  $(a,b)=(2p-2,q-2)$ or $(p-2,2q-2)$. Thus the possibly nonzero basic correlators   are $\LD x,x,x^{p-2}y^i,x^{p-2}y^{q-2-i}\RD_{0,4}$, $i=0,\dots,q-2$. On the other hands, if formula \eqref{4-point} holds, then by WDVV equation \eqref{eq:WDVV}, we have
\begin{eqnarray*}
&&\LD x,x,x^{p-2}\bullet y^i,x^{p-2}y^{q-2-i}\RD\\
&=&
-\LD x,x^{p-2},y^i,x\bullet x^{p-2}y^{q-2-i}\RD+\LD x,x\bullet x^{p-2},y^i,x^{p-2}y^{q-2-i}\RD+\LD x,x,x^{p-2},y^i\bullet x^{p-2}y^{q-2-i}\RD
\\
&=&\LD x,x,x^{p-2},y^i\bullet x^{p-2}y^{q-2-i}\RD=\frac{1}{p}.
\end{eqnarray*}

For 2-Chain $W^T=x^py+y^q$, the degree constraint \eqref{eq:deg} tells us
$$a\frac{q-1}{pq}+b\frac{1}{q}=\hat{c}_W-3+k.$$
For   $k=4$, this implies that   $(a,b)=(2p-2,q)$ or $(p-2,2q-1)$. The basic correlators are $\LD x,x,x^{p-2}y^{1+i},x^{p-2}y^{q-1-i}\RD$ with $0\leq i\leq q-1$,  $\LD y,y,x^iy^{q-2},x^{p-2-i}y^{q-1}\RD$ with $0\leq i\leq p-2$ and $\LD x,y,x^iy^{q-1},x^{p-3-i}y^{q-1}\RD$ with $0\leq i\leq p-3$. The first two types are uniquely determined from the correlators which are listed in Proposition \ref{thm:FJRW}. For example, if $0< i< q-1$, since $p\,x^{p-1}y=\pa_{x}W^T=0$ in $\Jac(W^T)$, we have
$$
\LD x,x,x^{p-2}y\bullet y^i,x^{p-2}y^{q-1-i}\RD=\LD x,x,x^{p-2}y,x^{p-2}y^{q-1-i}\bullet y^i\RD.
$$
The last type is determined by
\begin{eqnarray*}
&&\LD x,y,x^iy^{q-1},x^{p-3-i}y^{q-1}\RD\\
&=&-\frac{1}{q}\LD x,y,x^iy^{q-1}, x^{p-2-i}\bullet x^{p-1}\RD\\
&=&-\frac{1}{q}\left(\LD x,y,x^{p-1},x^{p-2}y^{q-1}\RD+\LD x,y\bullet x^{p-1},x^{p-2-i},x^iy^{q-1}\RD\right)\\
&=&-\frac{1}{q}\LD x,y,x\bullet x^{p-2},x^{p-2}y^{q-1}\RD
=-\frac{1}{q}\LD x,x,x^{p-2}\bullet y, x^{p-2}y^{q-1}\RD=-\frac{1}{pq}
\end{eqnarray*}
Here we  use the relation $x^{p}+qy^{q-1}=\pa_{y}W^T=0$ in $\Jac(W^T)$ in the first equality.

For 2-Loop $W^T=x^3y+xy^4$,   the degree constraint \eqref{fermat-deg} with $k=4$ implies that  $(a,b)=(5,4)$ or $(3,7)$. If the formula \eqref{4-point} holds, namely if
$$\LD x,x,xy,x^2y^3\RD=\frac{3}{11},\qquad\LD y,y,xy^2,x^2y^3\RD=\frac{2}{11},$$
then we conclude $\LD x,y,x^2,x^2y^3\RD=\frac{3}{11}, \,\,\LD x,y,y^2,x^2y^3\RD=\frac{2}{11}$ and $\LD x,x,x^2y^2,xy^2\RD=\frac{2}{11}$ from a single WDVV equation for each correlator.
For the rest, we conclude $\LD x,x,xy^3,x^2y\RD=\frac{1}{11}$ and $\LD x,y,xy^3,xy^3\RD=-\frac{1}{11}$ by solving the following linear equations which  come from the WDVV equation,
$$\left\{
\begin{array}{l}
-3\LD x,x,x^2y,xy^3\RD+\LD x,x\bullet xy^3,y^3,y\RD=\LD x,x\bullet y^3,y,xy^3\RD,
\\
-4\LD x,y,xy^3,xy^3\RD=\LD x,y,x^2,x\bullet xy^3\RD+\LD x,y\bullet x^2,x,xy^3\RD.
\end{array}
\right.$$
Here the coefficient $-3$ (resp. $-4$) comes from $3x^2y+y^4=0$ (resp. $x^3+4xy^3=0$) in $\Jac(W^T)$.
Similarly, we conclude   $\LD x,y,x^2y^2,x^2y\RD=-\frac{1}{11}$ and $\LD y,y,x^2y^2,xy^3\RD=\frac{1}{11}$.

For $W^T=x^2y+y^4+z^3\in Q_{10}$, the number of 4-point basic correlators is 10. Three of them are the initial correlators in \eqref{4-point}, $\LD x,x,y,y^3z\RD$, $\LD y,y,y^2,y^3z\RD$, $\LD z,z,z,y^3z\RD$, the rest are
$\LD y,y,y^2z,y^3\RD,$ $\LD y,z,y^3,y^3\RD,$ $\LD z,z,yz,y^2z\RD,$ $\LD x,x,yz,y^3\RD,$ $\LD x,x,y^2,y^2z\RD,$ $\LD x,y,xz,y^3\RD,$ and $\LD z,z,xz,xz\RD.$
We have 7 WDVV equations to reconstruct them from the initial correlators,
$$
\left\{
\begin{aligned}
&4\LD y,y,y^2z,y^3\RD=\LD x,x,y,y^3z\RD,
\LD y,z,y^3,y^3\RD=\LD y,y,y^2z,y^3\RD-\LD y,y,y^2,y^3z\RD,\\
&\LD z,z,yz,y^2z\RD=\LD z,z,z,y^3z\RD,
\LD x,x,yz,y^3\RD=\LD x,x,y,y^3z\RD,
\LD x,x,y^2,y^2z\RD=\LD x,x,y,y^3z\RD,\\
&\LD x,y,xz,y^3\RD=\LD x,x,y,y^3z\RD,
\LD z,z,xz,xz\RD=-4\LD z,z,z,y^3z\RD.
\end{aligned}
\right.
$$

For other singularities of 3-variables,  all the basic $4$-point correlators are uniquely determined from the initial correlators in formula \eqref{4-point}, by the same technique. However, the discussion is more tedius. For example, there are 21 of 4-point basic correlators for type $S_{12}$ singularity $W^T=x^2y+y^2z+z^3x$. We can write down 18 WDVV equations carefully to determine all the 21 basic correlators from 3 correlators in the formula \eqref{4-point}. The details are skipped here.
\qed

\bigskip

\subsection{Mirror symmetry at higher genus}
In Section \ref{sec2}, we already constructed the total ancestor FJRW potential $\mathscr{A}^{\rm FJRW}_{W}$ for a pair $(W,G_W)$. Now we give the B-model total ancestor Saito-Givental potential $\mathscr{A}^{\rm SG}_{W^T}$.
Let $S$ be the universal unfolding of the isolated singularity $W^T$.
For a semisimple point ${\bf s}\in S$, Givental \cite{G2} constructed the following formula containing higher genus information of the Landau-Ginzburg B-model of $f$, (see \cite{G1, G2, CI} for more details)
$$\mathscr{A}^{\rm SG}_f({\bf s}):=\exp\left(-\frac{1}{48}\sum_{i=1}^{\mu}\log\Delta^i(\bf s)\right)\widehat{\Psi_{\bf s}}\widehat{R_{\bf s}}(\mathcal{T}).$$
Here $\mathcal{T}$ is the product of $\mu$-copies of Witten-Kontsevich $\tau$-function. $\Delta^i({\bf s}), \Psi_{\bf s}$ and $R_{\bf s}$ are data coming from the Frobenius manifold. The operators $\widehat{\cdot}$ are the so-called quantization operators. We call $\mathscr{A}^{\rm SG}_f({\bf s})$ the Saito-Givental potential for $f$ at point ${\bf s}$.
Teleman \cite{T} proved that $\mathscr{A}^{\rm SG}_f({\bf s})$ is uniquely determined by the genus 0 data on the Frobenius manifold. By definition, the coefficients in each genus-$g$ generating function of $\mathscr{A}^{\rm SG}_f({\bf s})$ is just meromorphic near the non-semisimple point ${\bf s}={\bf 0}$. Recently, using Eynard-Orantin recursion, Milanov \cite{M} proved $\mathscr{A}_{W^T}^{\rm SG}(\mbf{t})$ extends holomorphically at $\mbf{t}=\mbf{0}$. We denote such an extension by $\mathscr{A}_{W^T}^{\rm SG}$ and the Corollary \ref{main} follows from  Theorem \ref{g=0-mirror} and Teleman's theorem.

\subsection{Alternative representatives and the other direction}\label{sec:alternative}


Although the theory of primitive forms depends only on the stable equivalence class of the singularity, the FJRW theory definitely depends on the choice of the polynomial together with the group.
For the exceptional unimodular singularities, in the following we list all the additional invertible weighted homogeneous polynomial representatives without quadratic terms $x_k^2$ in additional variables $x_k$ as follows (up to permutation symmetry among variables):
\begin{equation}\label{form:extra}
\left\{
\begin{array}{lll}
 E_{14}: x^3+y^8, & W_{12}: x^2y+y^2+z^5, & W_{13}: x^4y+y^4;\\
        Q_{12}: x^2y+y^5+z^3,& Z_{13}: x^3y+y^6, & U_{12}: x^2y+y^3+z^4;\\
U_{12}: x^2y+xy^2+z^4.&&
\end{array}
\right.
\end{equation}

It is quite natural to investigate Conjecture \ref{conj} for all the weighted homogeneous polynomial representatives  on   the B-side.
\begin{thm}\label{addtional7-FJRW}
Conjecture \ref{conj}  is true if $W^T$ is given by any weighted homogenous polynomial representative of the exceptional unimodular singularities that $W^T$ is not $x^2y+xy^2+z^4$.
That is, there exists a mirror map, such that
$$\mathscr{A}_{W}^{\rm FJRW}=\mathscr{A}_{W^T}^{\rm SG}.$$
\end{thm}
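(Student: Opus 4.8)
The plan is to rerun the same three-step machinery used for Table \ref{tab-exceptional-singularities} --- ring isomorphism, four-point computation, reconstruction --- now with $W^T$ ranging over the alternative representatives in \eqref{form:extra} and $W:=(W^T)^T$ the corresponding A-side polynomial. The crucial simplification is that the B-model is insensitive to the choice of representative: by the stable equivalence invariance of the theory of primitive forms together with the uniqueness of good sections (Proposition \ref{good-basis-uniqueness}, whose degree-counting proof goes through for any weighted homogeneous representative of the $14$ singularities), the Saito--Givental Frobenius manifold attached to $W^T$ is the canonical one for its singularity type. Moreover, the perturbative algorithm of Section \ref{sec3} applies to an arbitrary weighted homogeneous $W^T$, so $\mc F_{0,W^T}^{\rm SG}$ --- and in particular its basic four-point functions, read off from $\zeta_{(\leq 1)}$ via Proposition \ref{prop-order} --- can be computed exactly as before. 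Thus it remains to treat the A-side for each new $W$ and to match.

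First I would establish the ring isomorphism $\Psi\colon \Jac(W^T)\to (H_W,\bullet)$ for each new $W$. For the representatives whose A-side has all weights $q_i<\tfrac12$ (namely $W=x^3+y^8$ for $E_{14}$, $W=x^4+xy^4$ for $W_{13}$, and $W=x^3+xy^6$ for $Z_{13}$), this is immediate from Krawitz (Proposition \ref{Krawitz}) and the Fermat/chain formulas. For the representatives whose A-side has a variable of weight $\tfrac12$ (namely $W=x^2+xy^2+z^5$ for $W_{12}$, $W=x^2+xy^5+z^3$ for $Q_{12}$, and $W=x^2+xy^3+z^4$ for $U_{12}$), one ring generator is a broad element, and I would fix $\Psi$ through the broad-element normalization \eqref{broad-gen}, whose admissibility requires a nonvanishing statement of the type of Lemma \ref{lm:non-Krawitz}. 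This is the point where a fresh Getzler-relation computation on $\overline{\mathcal{M}}_{1,4}$ is needed; the argument is structurally identical to Section \ref{sec:non-K}, integrating $\Lambda_{1,4}^W$ over \eqref{eq:Getzler} and playing the outcome against the WDVV relations to rule out the degenerate solution.

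Next I would compute the initial four-point invariants $\LD \be_{x_i},\be_{x_i},\be_{M_i^T/x_i^2},\be_{\phi_\mu}\RD_0^W=q_i$ of \eqref{4-point}. Concave correlators are evaluated by the orbifold Grothendieck--Riemann--Roch formula \eqref{eq:OGRR}, and the remaining nonconcave ones are reconstructed from concave correlators through the WDVV identity \eqref{eq:WDVV} and the Jacobi-ring relations, exactly as in the proof of Proposition \ref{thm-reformulathm1}. I would then prove the analogue of the Reconstruction Lemma \ref{reconstruction-lemma} for each pair: that $\mc F_0$ is determined by basic correlators with at most five marked points, that basic five-point correlators vanish by the selection rule \eqref{eqn-deg-constraint} together with the bound $k\leq \frac{\hat c_{W^T}+1}{1-P}+2<6$, and that all basic four-point correlators are fixed by \eqref{4-point} via WDVV. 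Since both the A- and B-models are then reconstructed from the same ring structure and the same four-point data --- after the sign adjustment $\tilde\zeta:=(-1)^{-\hat c_{W^T}}\zeta$ of Section \ref{sec4} --- we conclude $\mc F_{0,W}^{\rm FJRW}=\mc F_{0,W^T}^{\rm SG}$, and the all-genus equality $\mathscr{A}_W^{\rm FJRW}=\mathscr{A}_{W^T}^{\rm SG}$ follows from Teleman's reconstruction and Milanov's holomorphic extension precisely as in Corollary \ref{main}.

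The main obstacle --- and the reason $x^2y+xy^2+z^4$ is excluded --- lies in the reconstruction step. For this loop representative the A-side has $q_x=q_y=\tfrac13$ and no broad sectors, yet the WDVV system generated by \eqref{eq:WDVV} together with the input \eqref{4-point} is not rich enough to pin down all of the basic four-point correlators, so part \eqref{mirror-4pt} of Lemma \ref{reconstruction-lemma} fails with the techniques developed here. Handling this remaining case would require either additional tautological relations on the moduli space or a direct evaluation of the WDVV-indeterminate four-point functions, which we set aside (see Remark \ref{rmk.4.5}). For the other six representatives the WDVV reconstruction does close up, and the verification, though carried out case by case, is routine.
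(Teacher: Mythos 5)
Your treatment of the six admissible representatives is, in outline, the same as the paper's: reduce to the list \eqref{form:extra} via Corollary \ref{main}, use the uniqueness of the good section (Proposition \ref{good-basis-uniqueness}) and the perturbative algorithm on the B-side, transplant the ring isomorphism and the computation of the initial invariants \eqref{4-point} from Section \ref{sec2}, and close with the reconstruction lemma plus Teleman--Milanov. Two inaccuracies in that part are worth noting. First, for the three representatives whose A-side has a weight-$\tfrac12$ variable, each $W$ is a \emph{direct sum} $(x^2+xy^q)\oplus z^r$, so by the Sums of Singularities axiom the FJRW ring is a tensor product and the broad generator is already handled by the $m=2$ chain result of \cite{FS} recalled in Section \ref{sec:ring-iso}; no fresh Getzler computation in the style of Lemma \ref{lm:non-Krawitz} is needed, since \eqref{broad-gen} and its nonvanishing hypothesis pertain to chains of length $m\geq 3$. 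Second, you miss a genuine subtlety that the paper flags for $W^T=x^2y+y^2+z^5$: there the available tools (WDVV together with the Euler-class formula \eqref{top-euler} at codimension $D=2$) determine only the \emph{square} of the relevant A-side correlator, so the mirror map requires a further rescaling of $\Psi(x)$ to fix the sign; your blanket appeal to the proof of Proposition \ref{thm-reformulathm1} does not cover this case as stated.

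The real error is your diagnosis of why $W^T=x^2y+xy^2+z^4$ is excluded. It is false that its A-side (here $W=W^T$, the polynomial being self-transposed) has no broad sectors: the elements of $G_W$ acting trivially on $(x,y)$ and nontrivially on $z$ fix the entire $(x,y)$-plane, and the $G_W$-invariant classes $x\,\be_\gamma$ and $y\,\be_\gamma$ survive in those sectors. Indeed both ring generators $\Psi(x)$ and $\Psi(y)$ are broad and lie in a common two-dimensional sector, as stated in the remark immediately following the theorem in the paper. Consequently the obstruction is not, as you claim, that the WDVV reconstruction (part \eqref{mirror-4pt} of Lemma \ref{reconstruction-lemma}) fails given the input \eqref{4-point}; it is that the input itself cannot be computed: the initial correlators $\LD\Psi(x),\Psi(x),\Psi(y),\Psi(\phi_\mu)\RD_{0}^{W}$ and $\LD\Psi(y),\Psi(y),\Psi(x),\Psi(\phi_\mu)\RD_{0}^{W}$ carry three broad insertions involving two distinct broad generators. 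This lies beyond every tool in the paper: concavity/oGRR \eqref{eq:OGRR} requires all-narrow insertions, the ring-relation-plus-WDVV trick of Proposition \ref{thm-reformulathm1} eliminates broad insertions only when a single broad generator is present, and the Getzler argument of Section \ref{sec:non-K} likewise handles a single broad generator. Since the theorem excludes this case, your proof of the stated result is not invalidated, but the failure mode you describe is located at the wrong step and rests on a false premise about the sector structure of $H_W$.
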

\begin{proof}[Sketch of the proof]
Thanks to Corollary \ref{main}, it remains to show the case when $W^T$ is given by \eqref{form:extra}. By Proposition \ref{good-basis-uniqueness}, there is a unique good section.
Let us specify a weighted homogeneous basis $\{\phi_1, \cdots, \phi_\mu\}$ of $\Jac(W^T)$ as in Table \ref{invertible-singularities} for each atomic type and take product of such bases for mixed types.
Then we could obtain the four-point function by direct calculations (see the link in the appendix for precise output).
An isomorphism $\Psi:  \Jac(W^T)\to H_{W}$ is chosen similarly as in Section \ref{sec2}.
We compute the corresponding four-point FJRW correlators as in Proposition \ref{thm-reformulathm1} by the same proof therein. If $W^T$ is not $x^2y+xy^2+z^4$, then the four-point FJRW correlators turn out to be the same as the B-side four-point correlator up to a sign.
These invariants completely determine the full data of the generating function at all genera on both sides, by exactly the same reconstruction technique as in the previous two subsections. Therefore, we conclude the statement.
\end{proof}

\begin{rmk}
If $W^T=x^2y+xy^2+z^4$, $H_{W}$ has broad ring generators $x\be_{J^8}$ and $y\be_{J^8}$. Our method does not apply to compute
$$\LD x\be_{J^8},x\be_{J^8},y\be_{J^8},\be_{J^{15}}\RD_{0}^W, \quad \LD y\be_{J^8},y\be_{J^8},x\be_{J^8},\be_{J^{15}}\RD_{0}^W, \quad \be_{\phi_{\mu}}=\be_{J^{15}}.$$
If $W^T=x^2y+y^2+z^5$, we may need a further rescaling on $\Psi(x)$ since we only know
$$\left(\LD\Psi(x),\Psi(x),\Psi(y),\Psi(yz^3)\RD_{0}^W\right)^2=2\LD\Psi(y),\Psi(y),\Psi(y),\Psi(y),\Psi(yz^3)\RD_{0}^W=\frac{1}{4}.$$
The first equality is a consequence of the WDVV equation and the second equality is a consequence of the orbifold GRR calculation with codimension $D=2$ (i.e., formula \eqref{top-euler}).
\end{rmk}

\subsubsection*{The other direction}

Among all the   representatives  $W$ on the A-side,   there are in total three cases for which $W^T$ is no longer exceptional unimodular. The corresponding $W^T$ is given by $x^3+xy^6$, $x^2+xy^5+z^3$, or $x^2+xy^3+z^4$. Let us end this section by the following remark, which gives a positive answer to Conjecture \ref{conj} for those representatives.
\begin{rmk}\label{rmk.4.5}
  \begin{enumerate}
    \item For the remaining three cases,  $W^T$ is no longer given by any one of the  exceptional unimodular singularities.
    \item A similar calculation as Proposition \ref{good-basis-uniqueness} shows that there exists a unique primitive form (up to a constant) for $x^2+xy^5+z^3$.
    However, for the other two cases   $x^3+xy^6$ and $x^2+xy^3+z^4$, there is a whole one-dimensional family of choices of primitive forms.
    \item Let us specify a basis $\{\phi_1, \cdots, \phi_\mu\}$ of $\Jac(W^T)$ following  Table \ref{invertible-singularities}. It is easy to check that   $\{[\phi_1d^n\mathbf{x}], \cdots, [\phi_\mu d^n\mathbf{x}]\}$ form a good basis and specifies a choice of primitive form. A similar calculation shows that the B-side four-point function coincides with the A-side one (up to a sign as before), and they completely determine the full data of the generating functions at all genera by the same reconstruction technique again.
  \end{enumerate}
\end{rmk}

\section{Appendix A}
\subsection{The vector space isomorphisms}
Here we list the vector space isomorphism $\Psi:\Jac(W^T)\to( H_W)$ for the remaining cases of $W$ in Table \ref{tab-exceptional-singularities}.

\noindent$(1)$
\textbf{$3$-Fermat type.} $W=W^T=x^3+y^3+z^4\in U_{12}$.
We denote $\be_{i, j,k}:=1 \in H_\gamma$ for $\gamma=\left(\exp({2\pi\sqrt{-1}\,i\over 3}), \exp({2\pi\sqrt{-1}\,j\over 3}, \exp({2\pi\sqrt{-1}\,k\over 4})\right)\in G_W$.
The isomorphism $\Psi$ is given by
\begin{equation*}\label{mirror-fermat}
\Psi(x^{i-1}y^{j-1}z^{k-1})=\be_{i,j,k}, \quad 1\leq i\leq 3,1\leq j\leq 3,1\leq k\leq 4.
\end{equation*}

\noindent$(2)$ \textbf{Chain type.}
Let  $W=x^3+xy^5$. The mirror $W^T$ is of type $Z_{11}$. Note $G_W\cong \mu_{15}$.

    \begin{table}[h]
    \resizebox{\columnwidth}{!}{
\begin{tabular}{|c||c|c|c|c|c|c|c|c|c|c|c|}
   \hline
  $H_W$  & $\be_{J}$ & $\be_{J^{13}}$ & $\be_{J^{11}}$ & $\be_{J^{10}}$ & $\be_{J^{8}}$ & $\mp5y^4\be_{J^0}$  & $\be_{J^{7}}$   & $\be_{J^5}$ & $\be_{J^4}$ & $\be_{J^{2}}$ & $\be_{J^{14}}$    \\
\hline
 ${\rm Jac}(W^T)$  & $1$ & $y$ & $x$ & $y^2$ & $xy$ & $x^2$ & $y^3$ & $xy^2$ & $y^4$ & $xy^3$ & $xy^4$ \\
\hline
 \end{tabular}
 }
 \end{table}

Let  $W=x^3y+y^5$. The mirror $W^T$ is of type $E_{13}$. Note $G_W\cong \mu_{15}$.

    \begin{table}[h]
\resizebox{\columnwidth}{!}{
\begin{tabular}{|c||c|c|c|c|c|c|c|c|c|c|c|c|c|}
   \hline
  $H_W$  & $\be_{J}$ & $\be_{J^{13}}$ & $\be_{J^{12}}$ & $\be_{J^{11}}$ & $\be_{J^{9}}$  & $\be_{J^{8}}$  & $\mp3y^2\be_{J^0}$ & $\be_{J^7}$ & $\be_{J^6}$ & $\be_{J^{5}}$ & $\be_{J^{4}}$ & $\be_{J^{2}}$ & $\be_{J^{14}}$   \\
\hline
 ${\rm Jac}(W^T)$  & $1$ & $y$ & $y^2$ & $x$ & $y^3$ & $xy$ & $y^4$ & $xy^2$ & $x^2$ & $xy^3$ & $x^2y$ & $x^2y^2$ & $x^2y^3$ \\
\hline
 \end{tabular}
 }
 \end{table}

Let  $W=x^2y+y^3z+z^3$. The mirror $W^T$ is of type $Z_{13}$. Note $G_W\cong \mu_{18}$.

    \begin{table}[h]
\resizebox{\columnwidth}{!}{
\begin{tabular}{|c||c|c|c|c|c|c|c|c|c|c|c|c|c|}
   \hline
  $H_W$  & $\be_{J}$ & $\be_{J^{16}}$ & $\be_{J^{14}}$ & $\be_{J^{13}}$ & $\be_{J^{11}}$  & $\be_{J^{10}}$  & $\mp3y^2\be_{J^9}$ & $\be_{J^8}$ & $\be_{J^7}$ & $\be_{J^{5}}$ & $\be_{J^{4}}$ & $\be_{J^{2}}$ & $\be_{J^{17}}$   \\
   \hline
 ${\rm Jac}(W^T)$  & $1$ & $y$ & $z$ & $y^2$ & $yz$ & $x$ & $z^2$ & $y^2z$ & $xy$ & $xz$ & $xy^2$ & $xyz$ & $xy^2z$ \\
\hline
 \end{tabular}
 }
 \end{table}

Let  $W=x^2y+y^2z+z^4$. The mirror $W^T$ is of type $W_{13}$. Note $G_W\cong \mu_{16}$.

    \begin{table}[H]
\resizebox{\columnwidth}{!}{
\begin{tabular}{|c||c|c|c|c|c|c|c|c|c|c|c|c|c|}
\hline
  $H_W$  & $\be_{J}$ & $\be_{J^{14}}$ & $\be_{J^{13}}$ & $\be_{J^{11}}$ & $\be_{J^{10}}$  & $\be_{J^{9}}$  & $\mp2y\be_{J^8}$ & $\be_{J^7}$ & $\be_{J^6}$ & $\be_{J^{5}}$ & $\be_{J^{3}}$ & $\be_{J^{2}}$ & $\be_{J^{15}}$   \\
   \hline
 ${\rm Jac}(W^T)$  & $1$ & $z$ & $y$ & $z^2$ & $yz$ & $x$ & $z^3$ & $yz^2$ & $xz$ & $xy$ & $xz^2$ & $xyz$ & $xyz^2$ \\
\hline
 \end{tabular}
 }
 \end{table}

\noindent$(3)$ \textbf{Loop type.}
There is one 2-Loop of type $Z_{12}$: $W=W^T=x^3y+xy^4$ with $G_W\cong\mu_{11}$.

    \begin{table}[H]
\resizebox{\columnwidth}{!}{
\begin{tabular}{|c||c|c|c|c|c|c|c|c|c|c|c|c|}
   \hline
 $H_W$  & $\be_{J}$ & $\be_{J^8}$ & $\be_{J^6}$ & $\be_{J^4}$ & $\be_{J^2}$  & $x^2\be_{J^0}$ & $y^3\be_{J^0}$ & $\be_{J^9}$ & $\be_{J^7}$ & $\be_{J^{5}}$ & $\be_{J^{3}}$ & $\be_{J^{10}}$    \\
   \hline
 ${\rm Jac}(W^T)$  & $1$ & $y$ & $x$ & $y^2$ & $xy$  & $x^2$ & $y^3$ & $xy^2$ & $x^2y$ & $xy^3$ & $x^2y^2$ & $x^2y^3$  \\
   \hline
 \end{tabular}
 }
 \end{table}

There is one   3-Loop with $W^T$ of type $S_{12}$:  $W=x^2z+xy^2+yz^3$ with $G_W\cong\mu_{13}$.

    \begin{table}[h]
\resizebox{\columnwidth}{!}{
\begin{tabular}{|c||c|c|c|c|c|c|c|c|c|c|c|c|}
   \hline
  $H_W$  & $\be_{J}$ & $\be_{J^{11}}$ & $\be_{J^{10}}$ & $\be_{J^9}$ & $\be_{J^8}$  & $\be_{J^7}$ & $\be_{J^6}$ & $\be_{J^5}$ & $\be_{J^4}$ & $\be_{J^{3}}$ & $\be_{J^{2}}$ & $\be_{J^{12}}$    \\
   \hline
 ${\rm Jac}(W^T)$  & $1$ & $z$ & $x$ & $y$ & $z^2$ & $xz$ & $yz$ & $xy$ & $xz^2$ & $yz^2$ & $xyz$ & $xyz^2$ \\
\hline
 \end{tabular}
 }
 \end{table}

\noindent$(4)$ \textbf{Mixed type.}
Let  $W=x^2+xy^4+z^3$. The mirror $W^T$ is of type $Q_{10}$. Note $G_W\cong \mu_{24}$.

     \begin{table}[h]
     \resizebox{\columnwidth}{!}{
 \begin{tabular}{|c||c|c|c|c|c|c|c|c|c|c|}
   \hline
 $H_W$  & $\be_{J}$ & $\be_{J^{19}}$ & $\be_{J^{17}}$ & $\mp 4y^{3}\be_{J^{16}}$ & $\be_{J^{13}}$& $\be_{J^{11}}$   & $\mp 4y^{3}\be_{J^{8}}$ & $\be_{J^{7}}$ & $\be_{J^{5}}$  &  $\be_{J^{23}}$ \\
   \hline
 ${\rm Jac}(W^T)$  & $1$ & $y$ & $z$ & $x$ & $y^2$ & $yz$ & $xz$ & $y^3$ & $y^2z$ & $y^3z$     \\
 \hline
 \end{tabular}
 }
 \end{table}

Let $W=x^2y+y^4+z^3$. The mirror $W^T$ is of type $E_{14}$. Note $G_W\cong \mu_{24}$.
     \begin{table}[h]
     \resizebox{\columnwidth}{!}{
 \begin{tabular}{|c||c|c|c|c|c|c|c|c|c|c|c|c|c|c|}
   \hline
 $H_W$  & $\be_{J}$ & $\be_{J^{22}}$& $\be_{J^{19}}$& $\be_{J^{17}}$  &  $\mp 2y\be_{J^{16}}$ & $\be_{J^{14}}$ & $\be_{J^{13}}$  &$\be_{J^{11}}$&  $\be_{J^{10}}$ & $\mp 2y\be_{J^{8}}$ & $\be_{J^{7}}$ & $\be_{J^{5}}$ & $\be_{J^{2}}$ & $\be_{J^{23}}$    \\
   \hline
 ${\rm Jac}(W^T)$  & $1$ & $z$ & $z^2$ & $x$ & $y^2$  & $xz$ & $x$ & $yz^2$ & $yz$ & $y^2z$ & $yz^2$ & $xy$ & $xyz$ & $xyz^2$    \\
   \hline
 \end{tabular}
 }
 \end{table}

 \subsection{Four-point functions for exceptional unimodular singularities}
In the following, we provide the four-point functions $\mc F_0^{(4)}(\mathbf{t})$ of the Frobenius manifold structure associated to the primitive form $\zeta$ for all the remaining 13 cases in Table \ref{tab-exceptional-singularities}. We mark  the terms that give the B-side four-point invariants corresponding to \eqref{4-point} by using boxes. We also provide   the expression of  $\zeta$ up to order 3. We remind our readers of $\tilde{\mc F}_0^{(4)}(\tilde{\mathbf{t}})=-\mc F_0^{(4)}(\mathbf{t})$ as discussed  in section 4.1.   We obtain the list with the help of a computer. The codes are written in mathematica 8,  available at
\quad http://member.ipmu.jp/changzheng.li/index.htm

 \noindent$\bullet$ Type $E_{13}$:  \noindent\(f= x^3+ xy^5.\quad \{\phi_i\}_i=   \left\{1,y,y^2,x,y^3,x y,y^4,y^2 x,x^2,y^3 x,y x^2,x^2 y^2,y^3 x^2\right\}.\)
$$\zeta =1-\frac{4 }{75}s_{12} s_{13}-\frac{1}{25}x s_{13}^2  + O(\mathbf{s}^4).$$
\begin{align*}
 - \mc F^{(4)}_0&=  -\frac{3}{10} t_6 t_7^3-\frac{3}{5} t_5 t_7^2 t_8+\frac{1}{10} t_5 t_6 t_8^2+\frac{1}{15} t_3 t_8^3
                            +\frac{1}{90} t_6^3t_9+\frac{3}{5}t_5 t_6 t_7 t_9+\frac{2}{5} t_4 t_7^2 t_9  +\frac{1}{5} t_5^2 t_8 t_9  \\
                      &   \quad
                             +\frac{1}{15} t_4 t_6 t_8 t_9+\frac{2}{5} t_3 t_7 t_8 t_9-\frac{1}{10} t_4 t_5t_9^2
                                      -\frac{1}{15} t_3 t_6 t_9^2-\frac{1}{30} t_2 t_8 t_9^2  +\frac{1}{10} t_5 t_6^2 t_{10} \\
                     & \quad -\frac{3}{10} t_5^2 t_7 t_{10}-\frac{3}{10} t_3 t_7^2 t_{10}+\frac{1}{5} t_3 t_6 t_8 t_{10}
                     + \frac{1}{10} t_2 t_8^2t_{10}+\frac{1}{30} t_4^2 t_9 t_{10}+\frac{1}{5} t_3 t_5 t_9 t_{10}  \\
                      &   \quad +\frac{1}{5} t_2 t_7 t_9 t_{10}                             +\frac{1}{10} t_2 t_6 t_{10}^2+\frac{3}{10}t_5^2 t_6 t_{11}+\frac{1}{15} t_4 t_6^2 t_{11}+\frac{3}{5} t_4 t_5 t_7 t_{11} +\frac{2}{5} t_3 t_6 t_7 t_{11}\\
                      &\quad    +\frac{1}{15} t_4^2 t_8 t_{11} +\frac{2}{5}t_3 t_5 t_8 t_{11}
                                       +\frac{1}{5} t_2 t_7 t_8t_{11}-\frac{2}{15} t_3 t_4 t_9 t_{11}-\frac{1}{15} t_2 t_6 t_9 t_{11} +\frac{1}{10} t_3^2 t_{10}t_{11}\\
                      &   \quad
                               +\frac{1}{5}t_2 t_5 t_{10} t_{11}   -\frac{1}{30} t_2 t_4 t_{11}^2 +\frac{1}{5} t_4 t_5^2 t_{12}+\frac{1}{10} t_4^2 t_6 t_{12}  +\frac{2}{5} t_3 t_5 t_6 t_{12}+\frac{2}{5} t_3 t_4 t_7 t_{12}\\
                      &\quad
                         +\frac{1}{5}t_2 t_6 t_7 t_{12}+\frac{1}{5} t_3^2 t_8 t_{12}+\frac{1}{5} t_2 t_5 t_8 t_{12} -\frac{1}{15} t_2 t_4 t_9 t_{12}
                                      +\frac{1}{5} t_2 t_3 t_{10} t_{12} \fbox{$+\frac{2}{45}t_4^3 t_{13}$}\\
                       &   \quad  +\frac{1}{5} t_3 t_4 t_5 t_{13}+\frac{1}{10} t_3^2 t_6 t_{13}       +\frac{1}{5} t_2 t_5 t_6 t_{13}
                                   +\frac{1}{5} t_2 t_4 t_7 t_{13}+\frac{1}{5}t_2 t_3 t_8 t_{13}\fbox{$+\frac{1}{10} t_2^2 t_{10} t_{13}$}
\end{align*}

 \bigskip

 \noindent$\bullet$ Type $E_{14}$: \noindent\(f=  x^2+x y^4+z^3.\quad \{\phi_i\}_i=   \left\{1,y,x,y^2,x y,y^3,x y^2,z,y z,x z,y^2 z,x y z,y^3 z,x y^2 z\right\}.\)
 $$\zeta =  1+\frac{1}{64} s_{12}^2 s_{14}+\frac{1}{64} s_{10} s_{14}^2+\frac{1}{48} y s_{12} s_{14}^2+\frac{1}{192} y^2 s_{14}^3+ O(\mathbf{s}^4).$$
 \begin{align*}
    -\mc F^{(4)}_0&=-{1\over 16} t_{3}^2 t_{5} t_{9}+{1\over 8} t_{5}^2 t_{6}t_{9}+{1\over 4} t_{4}t_{5} t_{ 7} t_{ 9}+{1\over 4} t_{ 3} t_{ 6} t_{ 7} t_{ 9}
                       +{1\over 8}t_{ 2} t_{ 7}^{ 2} t_{ 9}-{1\over 8} t_{ 3}^{ 2} t_{ 4} t_{ 10}-{1\over 8} t_{ 2} t_{ 3} t_{ 5} t_{ 10}\\
                   &\quad    +{1\over 2} t_{ 4} t_{ 5} t_{ 6} t_{ 10}+{3\over 8} t_{ 3} t_{ 6}^{ 2} t_{ 10}+{1\over 8} t_{ 4}^{ 2} t_{ 7} t_{ 10}+{1\over 4} t_{ 2} t_{ 6} t_{ 7} t_{ 10}+{1\over 6} t_{ 8} t_{ 9}^{ 2} t_{ 10}-{1\over 24} t_{ 3}^{ 3} t_{ 11}+{1\over 4} t_{ 4} t_{ 5}^{ 2} t_{ 11}\\
                    &\quad  +{1\over 2} t_{ 3} t_{ 5} t_{ 6} t_{ 11}+{1\over 4} t_{ 3} t_{ 4} t_{ 7} t_{ 11}+{1\over 4} t_{ 2} t_{ 5} t_{ 7} t_{ 11}-{1\over 4} t_{ 6}^{ 2} t_{ 7} t_{ 11}+{1\over 6} t_{ 8}^{ 2} t_{ 10} t_{ 11}-{1\over 6} t_{ 9}^{ 2} t_{ 11}^{ 2}-1/9 t_{ 8} t_{ 11}^{ 3}\\
                    &\quad -{1\over 16} t_{ 2} t_{ 3}^{ 2} t_{ 12}+{1\over 4} t_{ 4}^{ 2} t_{ 5} t_{ 12}+{1\over 2} t_{ 3} t_{ 4} t_{ 6} t_{ 12}+{1\over 4} t_{ 2} t_{ 5} t_{ 6} t_{ 12}-{1\over 6} t_{ 6}^{ 3} t_{ 12}+{1\over 4} t_{ 2} t_{ 4} t_{ 7} t_{ 12}+{1\over 6} t_{ 8}^{ 2} t_{ 9} t_{ 12}\\
                    &\quad +{1\over 2} t_{ 3} t_{ 4} t_{ 5} t_{ 13}+{1\over 8} t_{ 2} t_{ 5}^{ 2} t_{ 13}+{3\over 8} t_{ 3}^{ 2} t_{ 6} t_{ 13}-{1\over 2} t_{ 5} t_{ 6}^{ 2} t_{ 13}+{1\over 4} t_{ 2} t_{ 3} t_{ 7} t_{ 13}-{1\over 9} t_{ 9}^{ 3} t_{ 13}-{1\over 6} t_{ 8}^{ 2} t_{ 13}^{ 2}
                     \fbox{$+{1\over 18} t_{ 8}^{ 3} t_{ 14}$}\\
                    &\quad -{2\over 3} t_{ 8} t_{ 9} t_{ 11} t_{ 13}+{1\over 8} t_{ 3} t_{ 4}^{ 2} t_{ 14}+{1\over 4} t_{ 2} t_{ 4} t_{ 5} t_{ 14}+{1\over 4} t_{ 2} t_{ 3} t_{ 6} t_{ 14}-{1\over 4} t_{ 4} t_{ 6}^{ 2} t_{ 14}\fbox{$+{1\over 8} t_{ 2}^{ 2} t_{ 7} t_{ 14}$}-{1\over 2} t_{ 4} t_{ 6} t_{ 7} t_{ 13}
 \end{align*}

 \bigskip

 \noindent$\bullet$ Type $Z_{11}$: \noindent\(f=  x^3 y+y^5.\quad \{\phi_i\}_i=\left\{1,y,x,y^2,x y,x^2,y^3,x y^2,y^4,x y^3,x y^4\right\}\).
           $$\zeta =  1+\frac{17}{675} s_{10} s_{11}^2+\frac{2}{81} y s_{11}^3+ O(\mathbf{s}^4).$$
\begin{align*}
    - \mc F^{(4)}_0&= -\frac{5}{18} t_5 t_6^3+\frac{1}{3} t_4 t_6^2 t_7+\frac{1}{15} t_4 t_5 t_7^2-\frac{1}{90} t_3 t_7^3
                           +\frac{1}{18} t_5^3 t_8+\frac{2}{15}t_4^2 t_7 t_8+\frac{1}{3} t_3 t_6 t_7 t_8\\
                     &\quad +\frac{1}{10} t_2 t_7^2 t_8
                        +\frac{1}{6} t_3 t_5 t_8^2+\frac{1}{30} t_4^2 t_5 t_9+\frac{1}{3} t_3 t_5 t_6t_9+\frac{1}{3} t_2 t_6^2 t_9 -\frac{1}{15} t_3 t_4 t_7 t_9+\frac{1}{15} t_2 t_5 t_7 t_9\\
                     &\quad+\frac{1}{15} t_2 t_4 t_8 t_9 -\frac{1}{30} t_2 t_3 t_9^2 +\frac{1}{18} t_4^3 t_{10}+\frac{1}{6}t_3 t_5^2 t_{10}
                               +\frac{1}{3} t_3 t_4 t_6 t_{10}+\frac{1}{5} t_2 t_4 t_7 t_{10}\\
                     &\quad        +\frac{1}{6} t_3^2 t_8 t_{10}+\frac{1}{30} t_2^2 t_9 t_{10}
                             +\frac{1}{15}t_2 t_4^2 t_{11}\fbox{$+\frac{1}{6} t_3^2 t_5 t_{11}$}
                                          +\frac{1}{3} t_2 t_3 t_6 t_{11}\fbox{$+\frac{1}{15} t_2^2 t_7 t_{11}$}
\end{align*}

\bigskip

 \noindent$\bullet$ Type $Z_{12}$: \noindent\(f= x^3 y+x y^4.\quad \{\phi_i\}_i=\left\{1,y,x,y^2,x y,x^2,y^3,x y^2,x^2 y,x y^3,x^2 y^2,x^2 y^3\right\}\).

$$\zeta =  1-\frac{6 }{121}s_{11} s_{12}-\frac{5 }{121}y s_{12}^2+\frac{29 }{1331}s_{10} s_{12}^2+\frac{9 }{1331}x s_{12}^3+ O(\mathbf{s}^4).$$
\begin{align*}
   - \mc F^{(4)}_0&=-\frac{10}{33} t_5 t_6^3+\frac{5}{22} t_5 t_6^2 t_7+\frac{7}{22} t_5 t_6 t_7^2-\frac{7}{22} t_5 t_7^3
                                 +\frac{3}{11} t_4 t_6^2 t_8+\frac{4}{11}t_4 t_6 t_7 t_8\\
                  &\quad -\frac{6}{11} t_4 t_7^2 t_8+\frac{1}{11} t_4 t_5 t_8^2+\frac{2}{11} t_3 t_6 t_8^2-\frac{1}{22} t_3 t_7 t_8^2+\frac{1}{22} t_2 t_8^3
                                      +\frac{1}{66}t_5^3 t_9\\
                   &\quad -\frac{2}{11} t_4 t_5 t_6 t_9-\frac{2}{11} t_3 t_6^2 t_9+\frac{6}{11} t_4 t_5 t_7 t_9+\frac{1}{11} t_3 t_6 t_7 t_9+\frac{4}{11} t_3 t_7^2t_9+\frac{2}{11} t_4^2 t_8 t_9\\
                   &\quad   +\frac{1}{11} t_3 t_5 t_8 t_9-\frac{1}{11} t_2 t_6 t_8 t_9 +\frac{3}{11} t_2 t_7 t_8 t_9
                          -\frac{1}{11} t_3 t_4 t_9^2-\frac{1}{22} t_2 t_5 t_9^2+\frac{1}{22} t_4 t_5^2 t_{10}\\
                   &\quad +\frac{1}{22}t_4^2 t_6 t_{10}+\frac{4}{11} t_3 t_5 t_6 t_{10}+\frac{7}{22} t_2 t_6^2 t_{10}     -\frac{3}{22} t_4^2 t_7 t_{10}
                         -\frac{1}{11} t_3 t_5 t_7 t_{10}+\frac{1}{11}t_2 t_6 t_7 t_{10}\\
                   &\quad  -\frac{3}{22} t_2 t_7^2 t_{10}
                           -\frac{1}{11} t_3 t_4 t_8t_{10}+\frac{1}{11} t_2 t_5 t_8 t_{10}   +\frac{1}{22} t_3^2 t_9 t_{10}+\frac{1}{11}t_2 t_4 t_9 t_{10}
                    -\frac{1}{22} t_2 t_3 t_{10}^2\\
                    &\quad  +\frac{5}{22} t_4^2 t_5 t_{11}     +\frac{1}{11} t_3 t_5^2 t_{11}+\frac{2}{11} t_3 t_4 t_6 t_{11}   -\frac{1}{11} t_2 t_5 t_6 t_{11}  +\frac{5}{11} t_3 t_4t_7 t_{11}+\frac{3}{11} t_2 t_5 t_7 t_{11} \\
                  & \quad +\frac{1}{11} t_3^2 t_8 t_{11}
                          +\frac{3}{11} t_2 t_4 t_8 t_{11}-\frac{1}{11} t_2 t_3 t_9 t_{11}+\frac{1}{22}t_2^2 t_{10} t_{11}
              +\frac{1}{11} t_3 t_4^2 t_{12}\\
              &\quad\fbox{$+\frac{3}{22} t_3^2 t_5 t_{12}$}+\frac{2}{11} t_2 t_4 t_5 t_{12}
                          +\frac{3}{11} t_2 t_3 t_6 t_{12}+\frac{2}{11}t_2 t_3 t_7 t_{12}\fbox{$+\frac{1}{11} t_2^2 t_8 t_{12}$}
\end{align*}

\noindent\(                   {}\qquad\quad\,\,\\
                            {}\qquad\quad\,\,
                {}\qquad\)

\bigskip

\noindent$\bullet$ Type $Z_{13}$: \noindent\(f=x^2+x y^3+y z^3.\quad \{\phi_i\}_i=\left\{1,y,z,y^2,y z,x,z^2,y^2 z,x y,x z,x y^2,x y z,x y^2 z\right\}\).
$$\zeta = 1+\frac{7}{486} s_{12}^2 s_{13}+\frac{7}{486} s_{10} s_{13}^2+\frac{5 }{243}y s_{12} s_{13}^2+\frac{2}{729} y^2 s_{13}^3+ O(\mathbf{s}^4).$$
 \begin{align*}
   - \mc F^{(4)}_0&= -{1\over 12} t_{6}^{2} t_{7}^{2}-{1\over 6} t_{5} t_{7}^{3}-{5\over 108} t_{6}^{3} t_{8}-{1\over 6} t_{5}^{2} t_{8}^{2}
                       -{1\over 9} t_{3} t_{8}^{3}-{1\over 18} t_{5} t_{6}^{2} t_{9}+{1\over 3} t_{4} t_{7}^{2} t_{9}+{4\over 9} t_{4} t_{6} t_{8} t_{9}\\
             &\quad +{1\over 3} t_{3} t_{7} t_{8} t_{9}+{1\over 9} t_{4} t_{5} t_{9}^{2}+{1\over 36} t_{3} t_{6} t_{9}^{2}
                     +{1\over 6} t_{2} t_{8} t_{9}^{2}+{1\over 18} t_{5}^{3} t_{10}-{1\over 6} t_{4} t_{6}^{2} t_{10}-{1\over 6} t_{3} t_{6} t_{7} t_{10}\\
             &\quad +{1\over 3} t_{3} t_{5} t_{8} t_{10}-{1\over 6} t_{2} t_{6} t_{9} t_{10}
            +{1\over 9} t_{4} t_{5} t_{6} t_{11}+{1\over 36} t_{3} t_{6}^{2} t_{11}+{1\over 3} t_{3} t_{5} t_{7} t_{11}+{2\over 9} t_{2} t_{4} t_{6} t_{13}\\
             &\quad +{1\over 3} t_{2} t_{4} t_{9} t_{12}-{1\over 9} t_{4}^{2} t_{8} t_{11}+{1\over 9} t_{2} t_{6} t_{8} t_{11}-{1\over 9} t_{3} t_{4} t_{9} t_{11}+{1\over 9} t_{2} t_{5} t_{9} t_{11}+{1\over 9} t_{2} t_{4} t_{10} t_{11}\\
             &\quad+{1\over 3} t_{2} t_{7}^{2} t_{11} -{1\over 24} t_{3}^{2} t_{10}^{2} +{1\over 6} t_{3} t_{5}^{2} t_{12}+{5\over 18} t_{4}^{2} t_{6} t_{12}-{1\over 12} t_{2} t_{6}^{2} t_{12}+{1\over 3} t_{3} t_{4} t_{7} t_{12}+{1\over 6} t_{3}^{2} t_{8} t_{12}\\
             &\quad+{1\over 18} t_{2}^{2} t_{11} t_{12}-{2\over 27} t_{4}^{3} t_{13}\fbox{$+{1\over 6} t_{3}^{2} t_{5} t_{13}$}+{1\over 3} t_{2} t_{3} t_{7} t_{13}\fbox{$+{1\over 9} t_{2}^{2} t_{9} t_{13}$}-{1\over 18} t_{2} t_{3} t_{11}^{2}+{2\over 9} t_{4}^{2} t_{9} t_{10}
    \end{align*}

\bigskip

\noindent$\bullet$ Type $W_{12}$: \noindent\(f=x^4+y^5.\quad \{\phi_i\}_i=\left\{1,y,x,y^2,x y,x^2,y^3,x y^2,x^2 y,x y^3,x^2 y^2,x^2 y^3\right\}\).
$$\zeta =   1-\frac{1}{20}s_{11} s_{12}-\frac{1}{20}y s_{12}^2+ O(\mathbf{s}^4).$$
\begin{align*}
 -  \mc F^{(4)}_0&= \frac{1}{20} t_5^2 t_7^2+\frac{1}{8} t_5 t_6^2 t_8+\frac{1}{5} t_4 t_5 t_7 t_8+\frac{1}{10} t_4^2 t_8^2+\frac{1}{10} t_2 t_7 t_8^2
                          +\frac{1}{8}t_5^2 t_6 t_9\\
                &\quad+\frac{1}{10} t_4^2 t_7 t_9+\frac{1}{10} t_2 t_7^2 t_9
                          +\frac{1}{4} t_3 t_6 t_8 t_9+\frac{1}{8} t_3 t_5 t_9^2+\frac{1}{10} t_4^2 t_5t_{10}
                            +\frac{1}{8} t_3 t_6^2t_{10}\\
                &\quad+\frac{1}{5} t_2 t_5 t_7 t_{10}+\frac{1}{5} t_2 t_4 t_8 t_{10}+\frac{1}{20} t_2^2 t_{10}^2
                   +\frac{1}{15} t_4^3t_{11}+\frac{1}{4} t_3 t_5 t_6 t_{11}\\
                   &\quad+\frac{1}{5} t_2 t_4 t_7 t_{11}+\frac{1}{8} t_3^2 t_9 t_{11}
                           +\frac{1}{10} t_2 t_4^2 t_{12}\fbox{$+\frac{1}{8} t_3^2t_6 t_{12}$}\fbox{$+\frac{1}{10} t_2^2 t_7 t_{12}$}
\end{align*}

\bigskip

\noindent$\bullet$ Type $W_{13}$: \noindent\(f= x^2+x y^2+y z^4.\quad \{\phi_i\}_i= \left\{1,z,y,z^2,y z,x,z^3,y z^2,x z,x y,x z^2,x y z,x y z^2\right\}\).
 $$\zeta =  1-\frac{5 }{64}s_{11} s_{13}+\frac{15 s_{12}^2 s_{13}}{1024}-\frac{ y s_{13}^2}{128}+\frac{3 s_{10} s_{13}^2}{256}
                +\frac{11 z s_{12}s_{13}^2 }{512}+\frac{3z^2 s_{13}^3 }{512}+ O(\mathbf{s}^4).$$
\begin{align*}
    -\mc F^{(4)}_0 &=  -{3\over 32} t_{6}^{2} t_{7}^{2} -{1\over  6} t_{5} t_{7}^{3} -{1\over 48} t_{6}^{3} t_{8} -{1\over  4} t_{4} t_{7}^{2} t_{8}
               -{1\over 8} t_{5}^{2} t_{8}^{2}-{3\over 3 2} t_{5} t_{6}^{2} t_{9} -{1\over  4} t_{4} t_{6} t_{7} t_{9} +{1\over  4} t_{4} t_{5} t_{8} t_{9}\\
           &\quad  +{1\over 8} t_{2} t_{8}^{2} t_{9} -{1\over 1 6} t_{4}^{2} t_{9}^{2} -{1\over 8} t_{3} t_{6} t_{9}^{2} -{1\over 1 6} t_{2} t_{7} t_{9}^{2}
                                   +{1\over 1 6} t_{5}^{2} t_{6} t_{10} +{1\over 1 6} t_{4} t_{6}^{2} t_{10}+{1\over  2} t_{4} t_{5} t_{7} t_{10}\\
            &\quad +{3\over 8} t_{3} t_{7}^{2} t_{10}+{1\over 8} t_{4}^{2} t_{8} t_{10}+{1\over 8} t_{3} t_{6} t_{8} t_{10}
                   +{1\over  4} t_{2} t_{7} t_{8} t_{10}+{1\over 8} t_{3} t_{5} t_{9} t_{10}+{1\over 1 6} t_{2} t_{6} t_{9} t_{10} \\
            &\quad -{1\over 8} t_{3} t_{4} t_{10}^{2}-{1\over 1 6} t_{2} t_{5} t_{10}^{2}+{1\over 8} t_{4} t_{5}^{2} t_{11} -{1\over 1 6} t_{4}^{2} t_{6} t_{11} -{1\over 8} t_{3} t_{6}^{2} t_{11} -{1\over 8} t_{2} t_{6} t_{7} t_{11}\\
            &\quad +{1\over  4} t_{2} t_{5} t_{8} t_{11} -{1\over 8} t_{2} t_{4} t_{9} t_{11}+{1\over 1 6} t_{3}^{2} t_{10} t_{11}-{1\over 3 2} t_{2}^{2} t_{11}^{2}+{1\over  4} t_{4}^{2} t_{5} t_{12}+{1\over  4} t_{3} t_{5} t_{6} t_{12}\\
            &\quad +{1\over 3 2} t_{2} t_{6}^{2} t_{12}+{1\over  2} t_{3} t_{4} t_{7} t_{12}+{1\over  4} t_{2} t_{5} t_{7} t_{12}+{1\over  4} t_{2} t_{4} t_{8} t_{12}+{1\over 8} t_{3}^{2} t_{9} t_{12} -{1\over 8} t_{2} t_{3} t_{10} t_{12}\\
            &\quad +{1\over 8} t_{3} t_{4}^{2} t_{13}+{1\over  4} t_{2} t_{4} t_{5} t_{13}\fbox{$+{3\over 1 6} t_{3}^{2} t_{6} t_{13}$}+{1\over  4} t_{2} t_{3} t_{7} t_{13}\fbox{$+{1\over 8} t_{2}^{2} t_{8} t_{13}$}
\end{align*}

\bigskip

\noindent$\bullet$ Type $Q_{10}$: \noindent\(f=x^2 y+y^4+z^3.\quad \{\phi_i\}_i=\left\{1,y,z,x,y^2,y z,x z,y^3,y^2 z,y^3 z\right\}\).
$$\zeta =1+\frac{3}{128} s_9 s_{10}^2+\frac{11 }{384}y s_{10}^3+ O(\mathbf{s}^4).$$
\begin{align*}- \mc F^{(4)}_0&= \frac{1}{24} t_5^3 t_6+\frac{1}{18} t_3 t_6^3+\frac{1}{4} t_4 t_5^2 t_7-\frac{1}{3} t_3^2 t_7^2+\frac{1}{4} t_4^2 t_6 t_8
                               +\frac{1}{8}t_2 t_5 t_6 t_8+\frac{1}{2} t_2 t_4 t_7 t_8\\
                       &\quad +\frac{1}{4} t_4^2 t_5 t_9 +\frac{1}{8} t_2 t_5^2 t_9+\frac{1}{6} t_3^2 t_6 t_9+\frac{1}{16} t_2^2 t_8t_9
                             \fbox{$+\frac{1}{18} t_3^3 t_{10}$}\fbox{$+\frac{1}{4} t_2 t_4^2 t_{10}$}\fbox{$+\frac{1}{16} t_2^2 t_5 t_{10}$}
\end{align*}

\bigskip

\noindent$\bullet$ Type $Q_{11}$: \noindent\(f=x^2 y+y^3 z+z^3.\quad \{\phi_i\}_i=\left\{1,y,z,x,y^2,y z,z^2,x z,y^2 z,y z^2,y^2 z^2\right\}\).
  $$\zeta= 1-\frac{5 }{108}s_{10} s_{11}-\frac{1}{24}y s_{11}^2+\frac{13}{648} s_9 s_{11}^2
                         +\frac{25 }{1944}z s_{11}^3+ O(\mathbf{s}^4).$$
\begin{align*} -\mc F^{(4)}_0&=  \frac{1}{36} t_5 t_6^3+\frac{1}{4} t_5^2 t_6 t_7+\frac{1}{36} t_3 t_6^2 t_7-\frac{1}{24} t_4^2 t_7^2-\frac{1}{9} t_3 t_5 t_7^2
                               -\frac{1}{18}t_2 t_6 t_7^2+\frac{1}{2} t_4 t_5 t_6 t_8\\
                        &\quad -\frac{1}{6} t_3 t_4 t_7 t_8   -\frac{1}{4} t_3^2 t_8^2-\frac{1}{12} t_5^3 t_9+\frac{1}{4} t_4^2 t_6 t_9+\frac{1}{12}t_2 t_6^2 t_9
                                   +\frac{1}{36} t_3^2 t_7 t_9\\
                        &\quad +\frac{1}{6} t_2 t_5 t_7 t_9+\frac{1}{2} t_2 t_4 t_8 t_9 +\frac{1}{4} t_4^2 t_5 t_{10}  +\frac{1}{4} t_3 t_5^2 t_{10}
                                  +\frac{1}{12}t_3^2 t_6 t_{10}+\frac{1}{3} t_2 t_5 t_6 t_{10}\\
                        &\quad -\frac{1}{9} t_2 t_3 t_7 t_{10}+\frac{1}{12} t_2^2 t_9 t_{10}\fbox{$+\frac{5}{108} t_3^3 t_{11}$}\fbox{$+\frac{1}{4}t_2 t_4^2 t_{11}$}  +\frac{1}{6} t_2 t_3 t_5 t_{11}\fbox{$+\frac{1}{12} t_2^2 t_6 t_{11}$}
\end{align*}

\bigskip

\noindent$\bullet$ Type $Q_{12}$: \noindent\(f= x^2 y+x y^3+z^3.\quad \{\phi_i\}_i=\left\{1,x,y,x y,y^2,x y^2,z,x z,y z,x y z,y^2 z,x y^2 z\right\}\).
$$\zeta = 1+\frac{1}{75} s_{10}^2 s_{12}+\frac{1}{75} s_8 s_{12}^2+\frac{1}{50} y s_{10} s_{12}^2+\frac{1}{25}   s_{11}s_{12}^2+ O(\mathbf{s}^4).$$
\begin{align*}
   - \mc F^{(4)}_0&= -{3\over 10} t_{2}^{2} t_{4} t_{8}-{1\over 10} t_{3} t_{4}^{2} t_{8}+{1\over 5} t_{2} t_{4} t_{5} t_{8}+{3\over 10} t_{4} t_{5}^{2} t_{8}+{2\over 5} t_{2} t_{3} t_{6} t_{8}+{1\over 5} t_{3} t_{5} t_{6} t_{8}\fbox{$+{1\over 10} t_{3}^{2} t_{4} t_{12}$}\\
     &\quad-{1\over 10} t_{2} t_{4}^{2} t_{9}+{1\over 5} t_{4}^{2} t_{5} t_{9}+{1\over 5} t_{2}^{2} t_{6} t_{9}+{1\over 5} t_{3} t_{4} t_{6} t_{9}+{1\over 5} t_{2} t_{5} t_{6} t_{9}-{1\over 5} t_{5}^{2} t_{6} t_{9}+{1\over 6} t_{7} t_{8} t_{9}^{2}-{1\over 36}t_{9}^{4}\\
     &\quad +{1\over 5} t_{2} t_{3} t_{5} t_{12}-{1\over 5} t_{2} t_{3} t_{4} t_{10}+{1\over 10} t_{2}^{2} t_{5} t_{10}+{2\over 5} t_{3} t_{4} t_{5} t_{10}+{3\over 10} t_{2} t_{5}^{2} t_{10}-{1\over 5} t_{5}^{3} t_{10}+{1\over 10} t_{3}^{2} t_{6} t_{10}\\
     &\quad-{1\over 10} t_{2}^{3} t_{10}+{1\over 10} t_{2}^{2} t_{4} t_{11}+{1\over 5} t_{3} t_{4}^{2} t_{11}+{3\over 5} t_{2} t_{4} t_{5} t_{11}-{3\over 5} t_{4} t_{5}^{2} t_{11}+{1\over 5} t_{2} t_{3} t_{6} t_{11}-{2\over 5} t_{3} t_{5} t_{6} t_{11}\\
     &\quad +{1\over 6} t_{7}^{2} t_{8} t_{11} -{1\over 3} t_{7} t_{9}^{2} t_{11}-{1\over 6} t_{7}^{2} t_{11}^{2}\fbox{$+{1\over 5} t_{2}^{2} t_{3} t_{12}$}-{1\over 5} t_{3} t_{5}^{2} t_{12}\fbox{$+{1\over 18} t_{7}^{3} t_{12}$} +{1\over 6} t_{7}^{2} t_{9} t_{10}-{1\over 4} t_{7}^{2} t_{8}^{2}
\end{align*}

\bigskip

\noindent$\bullet$ Type $S_{11}$: \noindent\(f=x^2 y+y^2 z+z^4.\quad \{\phi_i\}_i= \left\{1,z,x,y,z^2,x z,y z,z^3,x z^2,y z^2,y z^3\right\}\).
$$\zeta = 1-\frac{3 }{64}s_{10} s_{11}-\frac{7 }{128}z s_{11}^2+\frac{9}{512} s_8 s_{11}^2
                         +\frac{5 }{1024}y s_{11}^3+ O(\mathbf{s}^4).$$
\begin{align*}- \mc F^{(4)}_0&=    -\frac{5}{32} t_5^2 t_6^2+\frac{1}{48} t_5^3 t_7+\frac{1}{4} t_4 t_6^2 t_7+\frac{1}{4} t_3 t_6 t_7^2-\frac{1}{16} t_4 t_5^2 t_8
                                -\frac{1}{8}t_3 t_5 t_6 t_8\\
                            &\quad -\frac{1}{16} t_2 t_6^2 t_8+\frac{1}{8} t_4^2 t_7 t_8  +\frac{1}{16} t_2 t_5 t_7 t_8-\frac{1}{32} t_3^2 t_8^2-\frac{1}{16} t_2 t_4 t_8^2- \frac{1}{16} t_3 t_5^2 t_9\\
                            &\quad
                                 -\frac{1}{2} t_2 t_5 t_6 t_9+\frac{1}{2} t_3 t_4 t_7 t_9-\frac{1}{8} t_2 t_3 t_8 t_9
                              -\frac{1}{8} t_2^2 t_9^2+\frac{1}{8}t_4^2 t_5 t_{10}+\frac{1}{8} t_2 t_5^2 t_{10}\\
                            &\quad +\frac{1}{2} t_3 t_4 t_6 t_{10}
                                  +\frac{1}{4} t_3^2 t_7 t_{10}+\frac{1}{32} t_2^2 t_8 t_{10}\fbox{$+\frac{1}{4}t_3^2 t_4 t_{11}$}
                                  \fbox{$+\frac{1}{8} t_2 t_4^2 t_{11}$}\fbox{$+\frac{3}{32} t_2^2 t_5 t_{11}$}
\end{align*}

\bigskip

\noindent$\bullet$ Type $S_{12}$: \noindent\(f= x^2 y+y^2 z+x z^3.\quad \{\phi_i\}_i= \left\{1,z,x,y,z^2,x z,y z,x y,x z^2,y z^2,x y z,x y z^2\right\}\).
$$\zeta =  1-\frac{12 s_{10} s_{12}}{169}+\frac{30 s_{11}^2 s_{12}}{2197}-\frac{2 x s_{12}^2}{169}+\frac{20 s_8 s_{12}^2}{2197}+\frac{93 z s_{11}
s_{12}^2}{4394}+\frac{9 z^2 s_{12}^3}{2197}+ O(\mathbf{s}^4).$$
\begin{align*}
   - \mc F^{(4)}_0&=   -\frac{5 }{156}t_6^4+\frac{1}{13} t_5 t_6^2 t_7-\frac{1}{13} t_5^2 t_7^2-\frac{1}{13} t_4 t_6 t_7^2
                                 -\frac{1}{26} t_3 t_7^3+\frac{5}{26}t_5^2 t_6 t_8\\
                &\quad +\frac{1}{26} t_4 t_6^2 t_8+\frac{2}{13} t_4 t_5 t_7 t_8  +\frac{1}{13} t_3 t_6 t_7 t_8+\frac{1}{26} t_2 t_7^2 t_8
                         -\frac{3}{52} t_4^2t_8^2-\frac{2}{13} t_3 t_5 t_8^2\\
                & \quad   -\frac{1}{13} t_2 t_6 t_8^2-\frac{1}{39} t_5^3 t_9  -\frac{2}{13} t_4 t_5 t_6 t_9-\frac{1}{13} t_3 t_6^2 t_9
                     +\frac{1}{13} t_4^2 t_7 t_9+\frac{1}{13} t_3 t_5 t_7 t_9\\
                &\quad +\frac{1}{13} t_2 t_6 t_7 t_9+\frac{1}{13} t_3 t_4 t_8 t_9 +\frac{1}{13}t_2 t_5 t_8 t_9
                       -\frac{1}{26} t_3^2 t_9^2-\frac{1}{13} t_2 t_4 t_9^2 -\frac{1}{13} t_4 t_5^2 t_{10}\\
                &\quad   +\frac{1}{26} t_4^2 t_6 t_{10}+\frac{1}{13} t_3t_5 t_6 t_{10}
                                    +\frac{2}{13} t_2 t_6^2 t_{10}-\frac{3}{13} t_3 t_4 t_7 t_{10}-\frac{2}{13} t_2 t_5 t_7 t_{10}+\frac{1}{26} t_3^2 t_8 t_{10}\\
                &\quad +\frac{1}{13} t_2 t_4 t_8 t_{10}+\frac{1}{13} t_2 t_3 t_9 t_{10}-\frac{1}{26} t_2^2 t_{10}^2+\frac{5}{26} t_4^2 t_5 t_{11}
                        +\frac{7}{26} t_3 t_5^2t_{11}+\frac{3}{13} t_3 t_4 t_6 t_{11}\\
                &\quad +\frac{4}{13} t_2 t_5 t_6 t_{11}  +\frac{3}{26} t_3^2 t_7 t_{11}+\frac{1}{13} t_2 t_4 t_7 t_{11}-\frac{2}{13}t_2 t_3 t_8 t_{11}
                                      +\frac{1}{26} t_2^2 t_9 t_{11}\fbox{$+\frac{5}{26} t_3^2 t_4 t_{12}$}\\
                &\quad \fbox{$+\frac{2}{13} t_2 t_4^2 t_{12}$} +\frac{3}{13} t_2 t_3 t_5 t_{12}\fbox{$+\frac{3}{26}t_2^2 t_6 t_{12}$}
\end{align*}

\bigskip

\noindent$\bullet$ Type $U_{12}$: \noindent\(f=x^3+y^3+z^4.\quad \{\phi_i\}_i= \left\{1,z,x,y,z^2,x z,y z,x y,x z^2,y z^2,x y z,x y z^2\right\}\).

 $$\zeta = 1+\frac{1}{72} s_{11}^2 s_{12}+\frac{1}{72} s_8 s_{12}^2+\frac{1}{36} z s_{11} s_{12}^2+\frac{1}{72} z^2 s_{12}^3+ O(\mathbf{s}^4).$$
\begin{align*} -\mc F^{(4)}_0&=   \frac{1}{8} t_5^2 t_6 t_7+\frac{1}{6} t_3 t_6^2 t_8+\frac{1}{6} t_4 t_7^2 t_8+\frac{1}{4} t_2 t_5 t_7 t_9
                                          +\frac{1}{6} t_3^2 t_8 t_9+\frac{1}{4}t_2 t_5 t_6 t_{10}+\frac{1}{6} t_4^2 t_8 t_{10}\\
                     &\quad +\frac{1}{8} t_2^2 t_9 t_{10}+\frac{1}{8} t_2 t_5^2 t_{11}+\frac{1}{6} t_3^2 t_6 t_{11}
                           +\frac{1}{6}t_4^2 t_7 t_{11}\fbox{$+\frac{1}{18} t_3^3 t_{12}$}\fbox{$+\frac{1}{18} t_4^3 t_{12}$}\fbox{$+\frac{1}{8} t_2^2 t_5 t_{12}$}
\end{align*}

\footnotesize
\noindent\textit{Acknowledgments.}

 The authors would like to thank Huai-Liang Chang, Kentaro Hori, Hiroshi Iritani, Todor Eliseev Milanov, Atsushi Takahashi, and especially Yongbin Ruan, for helpful discussions and valuable comments.
The authors also thank the   referee(s) for the careful reading and valuable comments. C. L. is supported by IBS-R003-D1. S.L. is partially supported by NSF DMS-1309118. K.S. is partially supported by  JSPS Grant-in-Aid for Scientific Research (A) No. 25247004.
In addition, Y.S. would like to thank Alessandro Chiodo, Jeremy Gu\'er\'e, Weiqiang He for helpful discussions.
S.L. would like to thank Kavli IPMU, and Y.S. would like to thank IAS of HKUST for their hospitality. Part of the work was done during their visit.

\normalsize
\baselineskip=17pt
 \begin{bibdiv}
\begin{biblist}

\bib{A}{article}
{
author={P. Acosta},
title={FJRW-Rings and Landau-Ginzburg Mirror Symmetry in Two Dimensions},
journal={arXiv:0906.0970 [math.AG]}
}


\bib{Arnold-strangduality}{article}{
   author={Arnold, V. I.},
   title={Critical points of smooth functions, and their normal forms},
   journal={Uspehi Mat. Nauk},
   volume={30},
   date={1975},
   number={5(185)},
   pages={3--65},
}
\bib{Arnold-book}{book}
{
author={Arnold, V. I.},
author={Gusein-Zade, S. M.},
author={Varchenko, A. N.},
  title={Singularities of differentiable maps. Vol. I.},
   note={Monographs in Mathematics, 82.},
   publisher={Birkh\"auser Boston, Inc.,},
   place={Boston, MA,},
   date={1985},
   pages={iv+382},
}


\bib{BHe}{article}
{
author={P. Berglund and M. Henningson},
title={Landau-Ginzburg orbifolds, mirror symmetry and the elliptic genus},
journal={Nucl. Phys. B, 433(1995) 311-332.}
}

\bib{BH}{article}
{
author={P. Berglund and T. H\"ubsch},
title={A Generalized Construction of Mirror Manifolds},
journal={Nucl. Phys. B 393 (1993)
377-391.}
}

\bib{CDGP}{article}
{
  author={Candelas, P.},
   author={de la Ossa, X. },
   author={Green, P.},
   author={Parkes, L.},
   title={A pair of Calabi-Yau manifolds as an exactly soluble
   superconformal theory},
   journal={Nuclear Phys. B},
   volume={359},
   date={1991},
   number={1},
   pages={21--74},
}

\bib{CLL}{article}
{
author={Chang, H.-L.},
author={Li, J.},
author={Li, W.},
title={Witten's top Chern class via cosection localization},
journal={preprint at arXiv: math.AG/1303.7126}
}

\bib{C}{article}
{
author={Chiodo, A.},
 title={Towards an enumerative geometry of the moduli space of twisted
   curves and $r$-th roots},
   journal={Compos. Math.},
   volume={144},
   date={2008},
   number={6},
   pages={1461--1496},
}

\bib{CIR}{article}
{
author={Chiodo, A.},
author={Iritani, H.},
author={Ruan, Y.},
title={Landau-Ginzburg/Calabi-Yau correspondence, global mirror symmetry and Orlov equivalence},
journal={Publications mathématiques de l'IH\'ES.},
volume={119},
date={2013},
number={1},
pages={127-216},
}

\bib{CR2}{article}
{
author={A. Chiodo},
author={Y. Ruan},
title={Landau-Ginzburg/Calabi-Yau correspondence for quintic three-folds via symplectic transformations},
journal={Invent. Math. 182 (2010), no. 1, 117-165}
}

\bib{CR}{article}
{
author={A. Chiodo},
author={Y. Ruan},
title={A global mirror symmetry framework for the Landau-Ginzburg/Calabi-Yau correspondence},
journal={Ann. Inst. Fourier (Grenoble) vol. 61, no. 7 (2011), 2803-2864}
}

\bib{CI}{article}
{
author={Coates, T.},
author={Iritani, H.},
title={On the Convergence of Gromov-Witten Potentials and Givental's Formula},
journal={preprint at arxiv: math.AG/1203.4193}
}

\bib{D}{article}{
      author={Dubrovin, B.},
       title={Geometry of {$2$}{D} topological field theories},
        date={1996},
   booktitle={Integrable systems and quantum groups ({M}ontecatini {T}erme,
  1993), 120-348, Lecture Notes in Math., 1620, Springer, Berlin},
}

\bib{FJR}{article}
{  author={Fan, H.},
   author={Jarvis, T.},
   author={Ruan, Y.},
   title={The Witten equation, mirror symmetry, and quantum singularity
   theory},
   journal={Ann. of Math. (2)},
   volume={178},
   date={2013},
   number={1},
   pages={1--106},
}
\bib{FJR2}{article}
{
author={Fan, H.},
author={Jarvis, T.},
author={Ruan, Y.},
title={The Witten Equation and Its Virtual Fundamental Cycle},
journal={preprint at arxiv: math.AG/0712.4025}
}

\bib{FS}{article}
{
author={Fan, H.},
author={Shen, Y.},
 title={Quantum ring of singularity $X^p + XY^q$},
   journal={Michigan Math. J.},
   volume={62},
   date={2013},
   number={1},
   pages={185--207},
}
	
\bib{Get}{article}
{
author={Getzler, E.},
title={Intersection theory on $\overline{\mathcal{M}}_{1,4}$ and elliptic Gromov-Witten invariants.},
journal={J. Amer. Math. Soc.},
volume={10},
date={1997},
number={4},
pages={973--998}
}

\bib{G1}{article}
{
author={Givental, A.},
 title={Gromov-Witten invariants and quantization of quadratic
   Hamiltonians},
   note={Dedicated to the memory of I. G.\ Petrovskii on the occasion of his
   100th anniversary},
   journal={Mosc. Math. J.},
   volume={1},
   date={2001},
   number={4},
}

\bib{G2}{article}
{
author={Givental, A.},
 title={Semisimple Frobenius structures at higher genus},
      journal={Internat. Math. Res. Notices},
   date={2001},
   number={23},
   pages={1265-1286},
}

\bib{G3}{article}
{
 author={Givental, A.},
   title={Equivariant Gromov-Witten invariants},
   journal={Internat. Math. Res. Notices},
   date={1996},
   number={13},
   pages={613--663},
}

\bib{G-tutorial}{article}{
   author={Givental, A.},
   title={A tutorial on quantum cohomology},
   conference={
      title={Symplectic geometry and topology},
      address={Park City, UT},
      date={1997},
   },
   book={
      series={IAS/Park City Math. Ser.},
      volume={7},
      publisher={Amer. Math. Soc., Providence, RI},
   },
   date={1999},
   pages={231--264},
}

\bib{Gu}{article}
{
author={Gu\'er\'e, J.}
title={A Landau--Ginzburg mirror theorem without concavity},
journal={preprint  at arXiv:1307.5070}
}

\bib{Hertling-classifyingspace}{article}{
   author={Hertling, C.},
   title={Classifying spaces for polarized mixed Hodge structures and for
   Brieskorn lattices},
   journal={Compositio Math.},
   volume={116},
   date={1999},
   number={1},
   pages={1--37},
}

\bib{HV}{article}
{
author={Hori, K.},
author={Vafa, C.},
title={Mirror symmetry},
journal={preprint at arXiv: hep-th/0002222}
}

\bib{Hori-Vafa-mirror-brane}{article}{
      author={Hori, K.},
      author={Iqbal, A.},
      author={Vafa, C.},
       title={D-branes and mirror symmetry},
     journal={preprint arXiv: hep-th/0005247},
}

\bib{mirror-book}{book}{
   author={Hori, K.},
   author={Katz, S.},
   author={Klemm, A.},
   author={Pandharipande, R.},
   author={Thomas, R.},
   author={Vafa, C.},
   author={Vakil, R.},
   author={Zaslow, E.},
   title={Mirror symmetry},
   series={Clay Mathematics Monographs},
   volume={1},
   publisher={American Mathematical Society},
   place={Providence, RI},
   date={2003},
}
	
\bib{Kontsevich}{article}{
   author={Kontsevich, M.},
   title={Homological algebra of mirror symmetry},
   conference={
      title={ 2},
      address={Z\"urich},
      date={1994},
   },
   book={
      publisher={Birkh\"auser},
      place={Basel},
   },
   date={1995},
   pages={120--139},
}

\bib{K}{article}
{
author={M. Krawitz},
title={FJRW rings and Landau-Ginzburg Mirror Symmetry},
journal={Ph.D. thesis, 
    University of Michigan, 
    2010.}
}

\bib{KP+}{article}
{
author={M. Krawitz},
author={N. Priddis},
author={P. Acosta},
author={N. Bergin},
author={H. Rathnakumara},
 title={FJRW-rings and mirror symmetry},
   journal={Comm. Math. Phys.},
   volume={296},
   date={2010},
   number={1},
   pages={145--174},
}

\bib{KS}{article}
{
author={M. Krawitz},
author={Y. Shen},
title={Landau-Ginzburg/Calabi-Yau Correspondence of all Genera for Elliptic Orbifold $\mathbb{P}^1$},
journal={preprint at arXiv: math.AG/1106.6270}
}

\bib{KreS}{article}
{
 author={Kreuzer, M.},
   author={Skarke, H.},
  title={On the classification of quasihomogeneous functions},
   journal={Comm. Math. Phys.},
   volume={150},
   date={1992},
   number={1},
   pages={137--147},
}

\bib{LLSaito}{article}
{
author={Li, C.},
author={Li, S.},
author={Saito, K.},
title={Primitive forms via polyvector fields},
journal={preprint  at arxiv: math.AG/1311.1659}
}

\bib{LLY}{article}
{
   author={Lian, B.},
   author={Liu, K.},
   author={Yau, S.-T.},
   title={Mirror principle. I},
   journal={Asian J. Math.},
   volume={1},
   date={1997},
   number={4},
   pages={729--763},
}

\bib{M}{article}
{
author={Milanov, T.},
title={Analyticity of the total ancestor potential in singularity theory},
journal={Advances in Math.},
volume={255},
date={2014},
pages={217--241},
}

\bib{MS}{article}
{
author={Milanov, T.},
author={Shen, Y.},
title={Global mirror symmetry for invertible simple elliptic singularities},
journal={preprint at arxiv: math.AG/1210.6862},
}

\bib{PV}{article}
{
author={Polishchuk, A.},
author={Vaintrob, A.},
title={Matrix factorizations and Cohomological Field Theories},
journal={preprint at arXiv:1105.2903},
}

\bib{R}{article}
{
author={Y. Ruan},
title={The Witten equation and the geometry of the Landau-Ginzburg model},
journal={String-Math 2011, 209-240, Proc. Sympos. Pure Math., 85, Amer. Math. Soc., Providence, RI, 2012},
}

\bib{Saito-quasihomogeneous}{article}{
   author={Saito, K.},
   title={Quasihomogene isolierte Singularit\"aten von Hyperfl\"achen},
   journal={Invent. Math.},
   volume={14},
   date={1971},
   pages={123--142},
}

\bib{Saito-simplyElliptic}{article}{
   author={Saito, K.},
   title={Einfach-elliptische Singularit\"aten},
   journal={Invent. Math.},
   volume={23},
   date={1974},
   pages={289--325},
}

\bib{Saito-universal}{article}{
      author={Saito, K.},
       title={Primitive forms for a universal unfolding of a function with an
  isolated critical point},
        date={1981},
        ISSN={0040-8980},
     journal={J. Fac. Sci. Univ. Tokyo Sect. IA Math.},
      volume={28},
      number={3},
       pages={775\ndash 792 (1982)},
}

\bib{Saito-primitive}{article}{
   author={Saito, K.},
   title={Period mapping associated to a primitive form},
   journal={Publ. Res. Inst. Math. Sci.},
   volume={19},
   date={1983},
   number={3},
   pages={1231--1264},
   issn={0034-5318},
}

\bib{Saito-residue}{incollection}{
      author={Saito, K.},
       title={The higher residue pairings {$K_{F}^{(k)}$} for a family of
  hypersurface singular points},
        date={1983},
   booktitle={Singularities, {P}art 2 ({A}rcata, {C}alif., 1981)},
      series={Proc. Sympos. Pure Math.},
      volume={40},
   publisher={Amer. Math. Soc.},
     address={Providence, RI},
       pages={441\ndash 463},
}

\bib{Saito-exceptional}{article}{
   author={Saito, K.},
   title={Regular system of weights and associated singularities},
   conference={
      title={Complex analytic singularities},
   },
   book={
      series={Adv. Stud. Pure Math.},
      volume={8},
      publisher={North-Holland},
      place={Amsterdam},
   },
   date={1987},
   pages={479--526},
}

\bib{Saito-duality}{article}{
   author={Saito, K.},
   title={Duality for regular systems of weights},
   journal={Asian J. Math.},
   volume={2},
   date={1998},
   number={4},
   pages={983--1047},
}

 \bib{Saito-Takahashi}{article}{
    author={Saito, K.},
   author={Takahashi, A.},
   title={From primitive forms to Frobenius manifolds},
    conference={
       title={From Hodge theory to integrability and TQFT tt*-geometry},
    },
   book={
       series={Proc. Sympos. Pure Math.},
       volume={78},
       publisher={Amer. Math. Soc.},
       place={Providence, RI},
   },
    date={2008},
    pages={31--48},
 }

\bib{Saito-existence}{article}{
   author={Saito, M.},
   title={On the structure of Brieskorn lattice},
   journal={Ann. Inst. Fourier (Grenoble)},
   volume={39},
   date={1989},
   number={1},
   pages={27--72},
   issn={0373-0956},
}

\bib{Saito-uniqueness}{article}{
   author={Saito, M.},
   title={On the structure of Brieskorn lattices, II},
   journal={preprint at arxiv: math.AG/1312.6629},
   date={ },
   number={ },
   pages={ },
   issn={ },
}

\bib{SYZ}{article}{
   author={Strominger, A.},
   author={Yau, S.-T.},
   author={Zaslow, E.},
   title={Mirror symmetry is $T$-duality},
   journal={Nuclear Phys. B},
   volume={479},
   date={1996},
   number={1-2},
   pages={243--259},
}

\bib{T}{article}
{
  author={Teleman, C.},
   title={The structure of 2D semi-simple field theories},
   journal={Invent. Math.},
   volume={188},
   date={2012},
   number={3},
   pages={525--588},
}
\bib{W}{article}
{
author={Witten, E.},
 title={Algebraic geometry associated with matrix models of
   two-dimensional gravity},
   conference={
      title={Topological methods in modern mathematics},
      address={Stony Brook, NY},
      date={1991},
   },
   book={
      publisher={Publish or Perish},
      place={Houston, TX},
   },
   date={1993},
   pages={235--269},

}

\end{biblist}
\end{bibdiv}

\end{document}